\documentclass[a4paper,12pt]{article}
\usepackage{amsfonts,amsmath,amssymb,amsthm}
\usepackage{txfonts}
\usepackage[utf8]{inputenc}
\usepackage{amsmath}
\usepackage{amssymb}
\usepackage{mathtools}
\usepackage{bbm}

\RequirePackage[colorlinks,citecolor=red,urlcolor=blue, linkcolor=blue]{hyperref}
\usepackage[margin=0.8in]{geometry}

\usepackage[dvips]{graphics}
\usepackage{epsfig,rotating}
\usepackage{tabularx}
\usepackage{bm}
\usepackage{bbm}
\usepackage{array}
\newcolumntype{P}[1]{>{\raggedright\arraybackslash}p{#1}}

\usepackage{mdframed}
\usepackage{lipsum}
\usepackage{enumitem}

\usepackage[authoryear,square]{natbib}

\usepackage[nottoc,numbib]{tocbibind} 
\usepackage[toc,page]{appendix}

\usepackage[dvips]{graphics}
\usepackage{multirow}
\usepackage{array}
\usepackage{epsfig,rotating}
\usepackage{amssymb,amsmath}
\usepackage{latexsym}
\usepackage{bm}
\usepackage{enumitem}

\usepackage{txfonts}
\usepackage[utf8]{inputenc}

\newcommand{\mc}{\mathcal}

\numberwithin{equation}{section}

\newcommand{\la}{\left\{}
\newcommand{\ra}{\right\}}
\newcommand{\lb}{\left(}
\newcommand{\rb}{\right)}
\newcommand{\mb}{\mathbb}
\newcommand{\ve}{\varepsilon}

\newcommand{\argmax}{\operatornamewithlimits{argmax}}

\newtheorem{theorem}{Theorem}[section]	
\newtheorem{lemma}{Lemma}[section]

\newtheorem{proposition}{Proposition}[section]

\newtheorem{remark}{Remark}[section]

\renewcommand{\P}{\mathbb{P}}

\newcommand{\Var}{\operatorname{Var}}

\newcommand{\Frechet}{Fr\'{e}chet }
\newcommand{\cid}{\stackrel{\rm d}{\rightarrow}}

\begin{document}
\title{Maximum of sparsely equicorrelated Gaussian fields and applications}
\author{
Johannes Heiny \\ \small{Department of Mathematics, KTH Royal Institute of Technology, Stockholm} \\ \small{\href{mailto:heiny@kth.se}{heiny@kth.se}} 
\and Tiefeng Jiang \\ \small{School of Data Science, Chinese University of Hong Kong, Shenzhen} \\ \small{\href{mailto:jiang040@cuhk.edu.cn}{jiang040@cuhk.edu.cn}}
\and Tuan Pham\thanks{Corresponding author.} \\ \small{Department of Statistics Data Science, University of Texas, Austin} \\  \small{\href{mailto:tuan.pham@utexas.edu}{tuan.pham@utexas.edu}}
\and Yongcheng Qi \\ \small{Department of Mathematics and Statistics, University of Minnesota, Duluth} \\  \small{\href{mailto:yqi@d.umn.edu}{yqi@d.umn.edu} }
}


\maketitle

\begin{abstract}

We investigate the extreme values of a sparse and equicorrelated Gaussian field on a triangle: the correlations on every vertical or horizontal line are all equal to a parameter $r \in [0,1/2]$ and are zero everywhere else. This problem is closely linked with various problems in high-dimensional statistics and extreme-value theory.  We identify the threshold for $r$ at which the standard Gumbel law breaks down. Our result is based on a subtle application of the Chen-Stein method for Poisson approximation. As applications, we discuss the implication of our results on multiple testing and resolve several questions that were left open in \cite{heiny2024maximum}, \cite{tang2022asymptotic} and \cite{Jiang19}.

\end{abstract}





\tableofcontents
\section{Introduction} \label{Intro}

Consider a centered Gaussian field $\mathcal{G}_n = \la  G_{ij} \ra_{1 \leq i<j \leq n}$ with correlation structure given by
\begin{align} \label{sparse equi GF}
    \mb{E} \lb G_{ij} G_{kl} \rb = \begin{cases}
        0~~  &\text{if} \ \big|\la i,j \ra \cap \la k,l \ra \big|= 0; \\
        r~~  &\text{if} \ \big|\la i,j \ra \cap \la k,l \ra\big| =1; \\
        1~~ &\text{if} \ \big|\la i,j \ra \cap \la k,l \ra\big| =2.
    \end{cases} 
\end{align}
The field $\mathcal{G}_n$ can be regarded as a Gaussian field defined in the triangle such that all the elements belonging to the same row or column are equicorrelated with a common parameter $r \in [0,1/2]$, and are independent otherwise. The restriction that $r \in [0,1/2]$ is natural since the correlation structure of $\mathcal{G}_n$ is not well-defined if $r$ is negative or exceeds $1/2$. The requirement $r \leq 1/2$, for instance, can be seen by noting that
\[
\mbox{Var} \lb G_{12} -G_{23} + G_{34} - G_{41}  \rb =4 - 8r \geq 0. 
\]

\noindent Despite having an exotic form, the Gaussian field $\mathcal{G}_n$ in \eqref{sparse equi GF} appears in a number of different problems in high-dimensional statistics and extreme-value theory. In particular, understanding the extreme values of $\mathcal{G}_n$ leads to improvement of many existing results. To the best of our knowledge, all existing results that require dealing with the correlation structure \eqref{sparse equi GF} are limited to the case $r \leq 1/3$, where the maximum of $\mathcal{G}_n$ behaves like that of i.i.d. standard normals. Let us describe in what follows several contexts in which \eqref{sparse equi GF} shows up. 
\begin{itemize}
\item {\it \textbf{Maximum interpoint distance in high-dimensions.}} In \cite{heiny2024maximum, tang2022asymptotic}, the authors investigate the asymptotic distribution of the statistic
\begin{align} \label{Dn}
D_n := \max_{1 \leq i < j \leq p} \| \bm{X}_i - \bm{X}_j \|
\end{align}
where the vectors \( \{ \bm{X}_i = ( x_{k1},\dots,x_{kn})  \}_{i=1}^p \) are i.i.d.\ in \( \mathbb{R}^n\), and the coordinates are i.i.d.\ with mean zero and unit variance.  { We note that, in \eqref{Dn}, $n$ denotes the dimension of the data, while $p$ denotes the sample size. 
We adopt this notation from \cite{heiny:kleemann:2024}. 
By contrast, \cite{tang2022asymptotic} uses the opposite convention, where $n$ is the sample size and $p$ is the dimension. Despite the difference in the notation, the condition \eqref{4th moment <5} below is the same in both works.}

It was shown in \cite{heiny2024maximum} and \cite{tang2022asymptotic} that $D_n$ asymptotically follows a Gumbel distribution under the high-dimensional settings, appropriate moment conditions  and 
\begin{align} \label{4th moment <5}
\mb{E} x_{11}^4 \leq 5.
\end{align}
The asymptotic distribution of \( D_n \) when~\eqref{4th moment <5} fails was left open in these works. Notably, the analysis of \( D_n \) in these papers essentially reduces to studying the maximum of the Gaussian field~\( \mathcal{G}_n \) in~\eqref{sparse equi GF}, with correlation parameter
\[
r = \frac{\mathbb{E}[x_{11}^4] - 1}{2\mathbb{E}[x_{11}^4] + 2}.
\]
The condition~\eqref{4th moment <5} is equivalent to requiring \( r \leq \tfrac{1}{3} \).

\item {\it \textbf{Sample coefficients of equicorrelated populations.}} In \cite{Jiang19}, the authors investigated the asymptotic distribution of the largest sample coefficients in the ultra high-dimensional settings. This includes the largest entries of sample covariance and correlation matrices described in \eqref{sample covariance} and \eqref{pearson} below. Some background and related results are presented in Section \ref{sec-sample coeff}.

Let us briefly introduce the settings. Given $n$ i.i.d. data points $\bm{X}_k =\lb x_{k1},x_{k2},\dots,x_{kp} \rb^\top$, $1 \leq k \leq n$ from a multivariate normal distribution with covariance matrix
\[
\mb{E} \lb \bm{X}_1\bm{X}_1^\top \rb = \lb 1 - \rho \rb \bm{I}_p + \rho \bm{1}\bm{1}^\top\,,
\]
define the sample correlation and (non-normalized) sample covariance coefficients
 \begin{align}
      \hat{\rho}_{ij} &:= \frac{ \sum_{k=1}^{n} ( x_{ki} - \bar{x}_i ) ( x_{kj} - \bar{x}_j )}{\sqrt{\sum_{k=1}^{n} ( x_{ki} - \bar{x}_i )^2} \cdot \sqrt{\sum_{k=1}^{n} ( x_{kj} - \bar{x}_j )^2}} \,,  \label{pearson} \\
       \hat{r}_{ij} &:= \sum_{k=1}^{n} ( x_{ki} - \bar{x}_i ) ( x_{kj} - \bar{x}_j ). \label{sample covariance}
  \end{align}
 It was shown in~\cite{Jiang19} that 
\(\max_{i<j} \hat{\rho}_{ij}\) and \(\max_{i<j} \hat{r}_{ij}\) 
are asymptotically Gumbel, normal, or a mixture of the two in the high-dimensional setting, depending on whether 
\(\rho \sqrt{\log p}\ll 1\), \(\rho\sqrt{\log p} \gg 1\), or \(\rho \sqrt{\log p} \to c \in (0,\infty)\), respectively. 

Interestingly, the analysis of both 
\(\max_{i<j} \hat{\rho}_{ij}\) and \(\max_{i<j} \hat{r}_{ij}\) 
can be reduced to the study of the maximum of the Gaussian field \(\mathcal{G}_n\) in~\eqref{sparse equi GF}, 
with correlation parameter
\[
r = \frac{\rho}{1+\rho} \quad \text{for } \eqref{sample covariance}, 
\qquad 
r = \frac{\rho - \rho^2/2}{1 + 2\rho - \rho^2/2} \quad \text{for } \eqref{pearson}.
\]
Note that, as \(\rho \to 1\), the parameter \(r\) approaches \(1/2\) in the case of \eqref{sample covariance}, 
and \(1/5\) in the case of \eqref{pearson}.
Therefore, the technical condition
\begin{align} \label{<1/2}
\limsup \rho < \tfrac{1}{2}
\end{align}
was imposed in~\cite{Jiang19} for their analysis of \(\max_{i<j} \hat{r}_{ij}\). 
It is worth pointing out that this assumption is equivalent to requiring \( r \leq \tfrac{1}{3} \) in~\eqref{sparse equi GF}. 

Interestingly, it can be seen that the corresponding $r$ for \eqref{pearson} is always less or equal to $1/5$ in the Gaussian case, which explains why no restriction on $\rho$ is needed for the analysis of $\max_{i<j} \hat{\rho}_{ij}$ in \cite{Jiang19}.

\item {\it \textbf{Family-wise error rate (FWER) control in mutiple testing.}} Consider the multiple testing problem
\[
H_{0ij}: \theta_{ij} = 0  \ \text{against} \ H_{1ij}: \theta_{ij}>0
\]
where one obtains a single observation $Y_{ij} \sim N \lb \theta_{ij},1 \rb$ for each $1\leq i<j \leq n$. 


Suppose the joint distribution of $\mc{Y}_{n} = \la Y_{ij}; 1\leq i<j \leq n \ra$ is a Gaussian graphical model with covariance satisfying
\begin{align} \label{graphical covariance}
    \mb{E} Y_{ij} Y_{kl} := \begin{cases}
        0 \ &\text{if} \ |\la i,j \ra \cap \la k,l \ra| = 0; \\
        r_{ijkl} \in [0,1/2-\delta]  &\text{if} \  |\la i,j \ra \cap \la k,l \ra| = 1;\\
        1  &\text{if} \  |\la i,j \ra \cap \la k,l \ra| = 2
    \end{cases}
\end{align}
for some $\delta>0$.

The graphical covariance structure described above commonly arises in brain imaging data: adjacent brain regions are typically correlated, while non-adjacent regions tend to be independent. A natural test statistic for controlling the family-wise error rate (FWER) is to reject if
\begin{align} \label{max test}
\max_{1\leq i<j\leq n} Y_{ij} > u_n
\end{align}
for some threshold $u_n$.
It is therefore of interest to understand the behavior of $\max_{1\leq i<j\leq n} Y_{ij}$ under this graphical model. Standard methods based on excursion probabilities of smooth Gaussian fields \cite{cheng2016excursion,cheng2019multiple,cheng2024expected} rely on smooth manifold structures and are not applicable here due to the discrete nature of the graph. Nevertheless, our main result shows that analyzing the maximum of $\mathcal{G}_n$ with correlation structure \eqref{sparse equi GF} yields an asymptotically exact approximation for $u_n$.

In general, controlling the FWER under complex dependence is challenging: one often resorts to union bounds, which give overly conservative thresholds. For this reason, much of the multiple testing literature instead focuses on controlling the false discovery rate (FDR). As an application of our results, we demonstrate that asymptotically exact thresholds for FWER control can in fact be obtained for the Gaussian graphical model \eqref{graphical covariance}.


  
\end{itemize}
The goal of the present article is to resolve all these gaps by establishing new results on the maximum of the Gaussian field~$\mathcal{G}_n$ in~\eqref{sparse equi GF}. We show that, perhaps surprisingly, the maximum of~$\mathcal{G}_n$ still behaves like that of i.i.d.\ standard normal random variables as long as 
\[
1-2r \gg \frac{\log \log n}{\sqrt{\log n}}.
\]
This constrasts the common belief that the Gumbel asymptotic distribution no longer holds when $r>1/3$. 

Moreover, at the finer scale where $(1-2r)\log n \to \lambda \in [0,\infty)$, the i.i.d. behavior breaks down. In this regime, we show that the asymptotic distribution of the maximum resembles either the right-most point of a perturbed Poisson process or the sum of the two right-most points of a Poisson process. Our proof relies on a delicate application of Chen–Stein’s method for Poisson approximation via a carefully designed truncation argument to create asymptotic independence.

 As a consequence of the newly developed results, we show the following.
\begin{enumerate}
    \item The assumption for the maximum interpoint distance $D_n$ that the fourth moment is bounded above by $5$, that is \eqref{4th moment <5}, can be removed. Thus, the results in \cite{heiny2024maximum,tang2022asymptotic} hold without this assumption. 
    \item The i.i.d. behavior breaks down and there are different limiting distributions for $D_n$ when the fourth moment diverges to infinity at the scale $\log p$.
    \item For sample coefficients, we recover  the results in \cite{Jiang19} for non-Gaussian distributions without imposing the restriction that the correlation parameter $\rho$ is bounded away from $1/2$ in \eqref{<1/2}. Morever, we show that in the non-Gaussian setting, the situation is far more complicated: the phase transition involving a non-Gumbel distribution and central limit theorem can still hold even in the weakly dependent regime $\rho \sqrt{\log p} \to 0$ if the fourth moment of the marginal is allowed to diverge.

   In contrast, the limiting distribution is always Gumbel in the Gaussian case (or fixed-marginal setting, see Sections \ref{sec-maximum interpoint} and \ref{sec-sample coeff} below). Similarly, a non-Gumbel distribution can appear for the maximum interpoint distance in \cite{heiny2024maximum}. 
    
    \item The results in Theorem \ref{infty} below can be used to provide asymptotically exact thresholds for multiple testing in a Gaussian graphical model. 
\end{enumerate}

\noindent \textbf{Structure of this paper.}  Our main results on the maximum of the Gaussian field $\mathcal{G}_n$ are presented in Section~\ref{sec-main}. 
The three applications -- to the maximum interpoint distance, sample coefficients of equicorrelated populations, and multiple testing -- are given in Sections~\ref{sec-maximum interpoint}, \ref{sec-sample coeff}, and~\ref{sec-FWER}, respectively. 
The proofs are collected in Sections~\ref{sec-proof 1}, \ref{sec-proof 2}, and~\ref{sec-proof 3}, and some additional technical results can be found in Section~\ref{sec-technical}.

\section{Main results} \label{sec-main}
{To describe the limiting distribution of the maximum of the Gaussian field $\mathcal{G}_n$ in \eqref{sparse equi GF}, we need the sequences 
 \begin{align}
     c_n& :=\sqrt{2 \log n}  \quad \text{ and } \quad 
     d_n  := \sqrt{2 \log n} - \frac{\log \log n + \log 4 \pi }{2\sqrt{2\log n}}. \label{dn}
 \end{align}
 We assume that the correlation parameter $r=r_n$ depends on $n$, unless explicitly stated otherwise. Throughout this paper, all limits are for $n\to \infty$.}
 
 Our first result concerns the weakly dependent regime. 
\begin{theorem} \label{infty}
    Suppose  $(1-2r) \frac{\sqrt{\log n}}{\log \log n} \to \infty$ and recall the Gaussian field $\mathcal{G}_n$ in \eqref{sparse equi GF}. Then,
    \[
    \sqrt{2}c_n \lb \max_{1\leq i<j \leq n} G_{ij} - \sqrt{2}c_n + \frac{\log \lb 4\sqrt{\pi} c_n \rb}{\sqrt{2}c_n} \rb
    \]
    converges in distribution to the standard Gumbel law with CDF $\exp \lb - e^{-x} \rb$. 
\end{theorem}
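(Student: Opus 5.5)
The plan is to decouple the dependence with a latent-variable representation, truncate, and then run a Poisson approximation (Chen--Stein, or equivalently a conditional product formula). Since $r\le 1/2$ we may write
\[
G_{ij}=\sqrt{r}\,(Z_i+Z_j)+\sqrt{1-2r}\,W_{ij},
\]
where $Z_1,\dots,Z_n$ and $\{W_{ij}\}_{i<j}$ are i.i.d.\ standard normals. One checks directly that this reproduces the covariance \eqref{sparse equi GF}.

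Fix $x$ and let $u=u_n(x)$ be the level at which the normalized statistic equals $x$, i.e.\ $u=\sqrt2c_n-\frac{\log(4\sqrt\pi c_n)}{\sqrt2c_n}+\frac{x}{\sqrt2c_n}$. A routine Mills-ratio computation with $N=\binom n2\sim n^2/2$ gives
\[
N\,\P(G_{12}>u)\to e^{-x}.
\]
So it suffices to show $\P(\max G_{ij}\le u)\to\exp(-e^{-x})$.

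Plain Chen--Stein, with neighbourhoods given by pairs sharing an index, gives $b_3=0$: a jointly Gaussian variable uncorrelated with a family is independent of it. It also gives $b_1=O(n^3\cdot n^{-4})\to0$. The difficulty is
\[
b_2\asymp n^3\,\P(G_{12}>u,G_{13}>u)\approx n^{3-4/(1+r)},
\]
which vanishes only for $r<1/3$. The excess comes from configurations where a single latent $Z_1$ is atypically large, larger than $\max_i Z_i\approx c_n$ can ever be. I therefore truncate on the event
\[
A_n=\Bigl\{\max_{i\le n}Z_i\le t_n\Bigr\},\qquad t_n=c_n+\frac{\kappa\log\log n}{c_n},
\]
with $\kappa$ chosen so that $\P(A_n^c)\to0$. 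On $A_n$ I apply Chen--Stein to the indicators $\mathbbm{1}\{G_{ij}>u,\ Z_i\le t_n,\ Z_j\le t_n\}$. The truncation only involves the two endpoint latents, so the dependency-neighbourhood structure is unchanged and $b_3=0$ still holds. Moreover, the removed mass in $b_1$ and in the mean, $N\,\P(G_{12}>u,\ Z_1>t_n)$, is $o(1)$.

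The key step, and the expected main obstacle, is the truncated two-point estimate
\[
n^3\,\P\bigl(G_{12}>u,\ G_{13}>u,\ Z_1\le t_n\bigr)\to0 .
\]
Condition on $Z_1=z$. Then $G_{12},G_{13}$ are conditionally independent given $(Z_1,Z_2,Z_3)$, and after integrating out $Z_2,Z_3$ each exceeds $u$ with probability $\bar\Phi\bigl((u-\sqrt r z)/\sqrt{1-r}\bigr)$. The exponent to minimise over $z\le t_n$ is
\[
\frac{z^2}{2}+\frac{(u-\sqrt r z)^2}{1-r}.
\]
Its unconstrained minimiser lies above $c_n$ whenever $r>1/3$, so the constraint binds and the minimum sits at $z=t_n$. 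Expanding at $r=\frac12-\delta$, the value there is $3\log n+c\,\delta\log n+O(\log\log n)$, where the $O(\log\log n)$ terms collect the $\log c_n$ shifts in $u$, the $\kappa\log\log n$ slack in $t_n$, and the polynomial prefactors.

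Hence the truncated $b_2$ is at most $\exp\bigl(-c\,\delta\log n+C\log\log n\bigr)$. It remains to check precisely that the hypothesis $(1-2r)\sqrt{\log n}/\log\log n\to\infty$ is enough to make this vanish. This may require choosing the truncation level more carefully, e.g.\ a $z$-dependent split of the integral into $z\le c_n-\eta_n$ and $z\in[c_n-\eta_n,t_n]$ with a Gaussian tail bound on the second piece. I would do this bookkeeping carefully with explicit Mills-ratio bounds, since the $\log\log n$ factors are exactly where the stated rate appears.

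Given this estimate, Chen--Stein yields
\[
\Bigl|\P\bigl(\max G_{ij}\le u,\ A_n\bigr)-e^{-\lambda_n}\Bigr|\to0,\qquad \lambda_n\to e^{-x},
\]
and removing $A_n$ costs $o(1)$. This proves Theorem \ref{infty}.
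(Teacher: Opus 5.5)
Your architecture matches the paper's. You use the same latent representation $G_{ij}=\sqrt r(X_i+X_j)+\sqrt{1-2r}\,Y_{ij}$, truncate the latents at $\sqrt{2\log n}$ plus a $\log\log n/\sqrt{\log n}$ slack, apply Arratia--Goldstein--Gordon with $b_3=0$, and reduce to a truncated two-point estimate. Even your exponent is the paper's. The paper integrates over $[-t_n,t_n]^3$, completes the square with $\bm{H}$, and then discards every constraint except $X_1\le t_n$. That is your conditioning on $Z_1$ with $Z_2,Z_3$ integrated out freely.

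The gap is in the step you flagged as the main obstacle: your evaluation of that exponent is wrong. The function $V(z)=z^2/2+(u-\sqrt r z)^2/(1-r)$ has $V''=(1+r)/(1-r)$ and free minimiser $z^*=2\sqrt r\,u/(1+r)$, which exceeds $t_n$ for $r$ near $\frac12$. Hence
\[
V(t_n)=\frac{u^2}{1+r}+\frac{\bigl((1+r)t_n-2\sqrt r\,u\bigr)^2}{2(1-r^2)}=\bigl(3+g(r)\bigr)\log n+O(\log\log n),
\]
with
\[
g(r)=\frac{2(1-\sqrt{2r})^2}{1-r}=\frac{2(1-2r)^2}{(1-r)(1+\sqrt{2r})^2}.
\]
So the excess is quadratic, $g(r)\sim 4\delta^2$ for $r=\frac12-\delta$, not $c\,\delta$. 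At $r=\frac12$ two first-order effects cancel exactly: shrinking $\sqrt r$ raises $u-\sqrt r\,t_n$, while enlarging the conditional variance $1-r$ lowers the cost. Your bound $\exp(-c\delta\log n+C\log\log n)$ is therefore false in the relevant range. For $\delta=(\log n)^{-0.4}$ the quantity is $\exp(-(4+o(1))(\log n)^{0.2})$. A linear excess would also wrongly suggest that this step works down to $(1-2r)\log n\gg\log\log n$.

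With the correct value the argument closes exactly as in the paper. Since $g(r)\ge\frac12(1-2r)^2$, you get $n^3\,\P(G_{12}>u,G_{13}>u,Z_1\le t_n)\le\exp\bigl(-\frac12(1-2r)^2\log n+O(\log\log n)\bigr)\to0$, because the hypothesis gives $(1-2r)^2\log n\gg(\log\log n)^2$. No $z$-dependent splitting is needed. Evaluating at the boundary $z=t_n$ with polylogarithmic Mills-ratio prefactors suffices, and $\kappa$ only shifts the $O(\log\log n)$ term.

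You also assert without argument that the removed mean mass $N\,\P(G_{12}>u,Z_1>t_n)$ is $o(1)$. This is not automatic: at $r=\frac12$ it tends to $e^{-x}/2$. It holds only because of the hypothesis. Given $G_{12}\approx u$, one has $Z_1\sim N(\sqrt r\,u,1-r)$, and $t_n-\sqrt r\,u\ge(1-\sqrt{2r})\sqrt{2\log n}\to\infty$. The paper makes this rigorous with a bivariate normal tail bound, which yields a factor $\exp\bigl(-c(1-2r)^2\log n+O(\log\log n)\bigr)$.
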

{It is well-known that the normal distribution is in the maximum domain of attraction of the Gumbel law, which corresponds to the special case $r=0$  in Theorem \ref{infty} (see for instance \cite{embrechts:kluppelberg:mikosch:1997}). }Theorem \ref{infty} states that even under dependence the maximum of $\mc{G}_n$ behaves like that of i.i.d. standard normal random variables, as long as $(1-2r) \sqrt{\log n}/ \log \log n \to \infty$. Next, we describe the limiting distribution at the critical regime where this i.i.d.\ resemblance breaks down. 

Let $\la \zeta_k; k \geq 1 \ra$ be a sequence of i.i.d. standard exponential random variables and define
\begin{align} \label{eta}
\eta_i := -\log \lb \sum_{k=1}^i \zeta_k \rb\,, \qquad i\ge 1.
\end{align}
Then the set $\la \eta_i; i \geq 1 \ra$ forms a realization of a Poisson Point Process (P.P.P.) with intensity measure $e^{-x}dx$. We will use this equivalence interchangeably throughout the paper. 

{At the critical correlation regime, the limiting distribution of $\max_{1\leq i<j \leq n} G_{ij}$ can be expressed as the supremum of the points $\eta_i$ and i.i.d.\ normal variables, as described in the next result.}
\begin{theorem} \label{critical}
    Suppose $(1-2r)\log n \to \lambda \in (0,\infty)$ and recall the Gaussian field $\mathcal{G}_n$ in \eqref{sparse equi GF}. Then it holds 
    \begin{align} \label{transition}
        c_n \lb \max_{1\leq i<j \leq n} G_{ij} - \sqrt{2}d_n \rb \stackrel{d}{\to} \sup_{i<j} \lb \frac{\eta_i+ \eta_j}{\sqrt{2}} + \sqrt{2\lambda} \cdot  Z_{ij} \rb -\lambda,
    \end{align}
    where $\eta_i$'s are defined in \eqref{eta} and $Z_{ij}$'s are i.i.d. standard normal independent from $\eta_k$'s. 
\end{theorem}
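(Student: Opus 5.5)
The natural starting point is an explicit representation of $\mathcal{G}_n$. Let $\{\xi_i\}_{i\le n}$ and $\{W_{ij}\}_{i<j}$ be independent standard normals, and set
\[
G_{ij} = \sqrt{r}\,(\xi_i+\xi_j) + \sqrt{1-2r}\,W_{ij}.
\]
Two indices sharing one vertex then have covariance $r$, disjoint pairs have covariance $0$, and each variance equals $2r+1-2r=1$, so this field has exactly the law \eqref{sparse equi GF}. Write $1-2r = \lambda_n/\log n$ with $\lambda_n\to\lambda$, so that $\sqrt{1-2r}=\sqrt{\lambda_n}/\sqrt{\log n} = \sqrt{2\lambda_n}/c_n$. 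The proof then proceeds by conditioning on $\xi$ and treating the $W$-part as a small Gaussian perturbation of the vertex sums.

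\textbf{Step 1 (vertex part).} By the classical extreme value theorem for i.i.d.\ normals, the rescaled vertex values $c_n(\xi_i - d_n)$, arranged in decreasing order, converge to the points $\eta_1>\eta_2>\cdots$ of the PPP with intensity $e^{-x}dx$. The convergence holds jointly for any finite number of top order statistics, and also in the sense of point processes on $(x_0,\infty)$. Since $\sqrt r = \tfrac{1}{\sqrt2}\sqrt{1-(1-2r)}$, we get $\sqrt{r} = \tfrac{1}{\sqrt2}\bigl(1-\tfrac{\lambda_n}{2\log n}+O(\log^{-2}n)\bigr)$. Hence
\[
c_n\bigl(\sqrt{r}(\xi_i+\xi_j) - \sqrt2 d_n\bigr) = \frac{c_n(\xi_i-d_n)+c_n(\xi_j-d_n)}{\sqrt2} - \frac{\lambda_n}{2\log n}\cdot\frac{c_n(\xi_i+\xi_j)}{\sqrt2} + o(1),
\]
and for indices near the top, $c_n(\xi_i+\xi_j)/\sqrt2 \approx \sqrt2 c_n^2 \cdot \tfrac{1}{\sqrt2}\cdot$, which makes the correction term equal to $-\lambda + o(1)$. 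The noise contributes $c_n\sqrt{1-2r}\,W_{ij} = \sqrt{2\lambda_n}\,W_{ij}$. For any fixed $K$, the maximum over pairs $i<j\le K$ of the top-$K$ vertices therefore converges to $\sup_{i<j\le K}\bigl(\frac{\eta_i+\eta_j}{\sqrt2}+\sqrt{2\lambda}Z_{ij}\bigr)-\lambda$, by independence of $W$ from $\xi$ and the continuous mapping theorem.

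\textbf{Step 2 (pairs involving low-ranked vertices).} This step is the main obstacle. I must show that
\[
\lim_{K\to\infty}\limsup_n \P\Bigl(\max_{\{i,j\}\not\subset \text{top }K} c_n(G_{ij}-\sqrt2 d_n) > x\Bigr)=0
\]
for each fixed $x$, and the analogous tail statement for the limit object. The difficulty is that there are about $n^2$ pairs, while the Gaussian noise $\sqrt{2\lambda}Z$ can compensate a vertex deficit. I would condition on $\xi$ and bound the probability by a union bound:
\[
\sum_{(i,j)} \bar\Phi\Bigl(\frac{x+\lambda - (u_i+u_j)/\sqrt2}{\sqrt{2\lambda}}\Bigr),\qquad u_i = c_n(\xi_i-d_n).
\]
I would then split vertices by level. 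For vertices with $u_i\ge -M_n$, with $M_n$ growing slowly (say $M_n = \log\log n$), the point process of the $u_i$ has about $e^{M_n}$ points, and the expected sum is controlled by the integral
\[
\iint e^{-a-b}\,\bar\Phi\Bigl(\frac{x+\lambda-(a+b)/\sqrt2}{\sqrt{2\lambda}}\Bigr)\,da\,db .
\]
The factor $e^{-(a+b)}$ beats the Gaussian gain, because $\bar\Phi$ of a quantity that is linear in $s=a+b$ decays like $e^{-cs^2}$ as $s\to-\infty$, which dominates the $e^{-s}$ growth. Consequently the contribution from pairs with $\min(u_i,u_j) < -A$ is small uniformly once $A$ is large; since $K$ top points cover $[-A,\infty)$ with high probability, this completes the argument on this range. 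Vertices with $u_i<-M_n$, that is $\xi_i < d_n - M_n/c_n$, need a cruder argument. There I would bound $\P(G_{ij}>\sqrt2 d_n + x/c_n)$ directly via the joint Gaussian tail, combining a first-moment count with the Gaussian integral, and exploit the fact that the $(1-2r)$-variance noise is tiny ($O(1/\log n)$) relative to the scale needed. If a cleaner route is needed, Chen--Stein Poisson approximation on the exceedance count, conditional on $\xi$, can be used instead, since given $\xi$ the $W_{ij}$ are independent and a first-moment bound then suffices.

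\textbf{Step 3 (conclusion).} The limit functional is a.s.\ finite: by the same integral estimate applied to the PPP, $\sup_{i<j}$ is attained among finitely many indices. Combining Steps 1 and 2 through a standard approximation argument (Billingsley, Theorem 3.2), letting $n\to\infty$ and then $K\to\infty$, yields \eqref{transition}. Continuity of the limiting CDF, which holds because of the Gaussian smoothing by $Z_{ij}$, ensures convergence at every $x$.
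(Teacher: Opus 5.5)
Your overall architecture matches the paper's (Proposition~\ref{perturbed point process} plus Lemma~\ref{localization}): the representation $G_{ij}=\sqrt r(\xi_i+\xi_j)+\sqrt{1-2r}\,W_{ij}$, convergence of the top-$K$ block from the top order statistics and the independence of $W$, then localization of the maximizing pair in the top $K$ vertices. The gap is in Step~2, which is the real content of the proof. The first-moment mechanism you propose there is infinite. On each line $a+b=s$ your integrand equals $e^{-s}\bar\Phi\bigl((x+\lambda-s/\sqrt2)/\sqrt{2\lambda}\bigr)$, which does not depend on $a$. The set $\{a+b=s,\ \min(a,b)<-A\}$ contains a half-line, so
\[
\iint_{\min(a,b)<-A} e^{-a-b}\,\bar\Phi\Bigl(\frac{x+\lambda-(a+b)/\sqrt2}{\sqrt{2\lambda}}\Bigr)\,da\,db=\infty \quad\text{for every } A .
\]
The Gaussian decay as $s\to-\infty$ does not help, because every level set has infinite length. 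This is exactly the paper's remark that $\sum_{i<j}\delta_{\eta_i+\eta_j}$ has infinite intensity on every compact interval.

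The divergence is already visible at finite $n$: $\binom n2\bar\Phi(\sqrt2 d_n+x/c_n)\asymp\sqrt{\log n}\to\infty$. Restricting to $u_i<-M_n$ removes only about half of this. The reason is that $\xi_i-\xi_j\sim N(0,2)$ is independent of $G_{ij}$, so on the exceedance event $u_i$ still fluctuates on the scale $c_n$. The expectation is carried by rare configurations, namely one abnormally high vertex paired with many low ones. Consequently three steps fail as stated: your integral bound, the ``cruder first-moment count'' for $u_i<-M_n$, and the a.s.\ finiteness of the limit ``by the same integral estimate'' in Step~3. Conditional Chen--Stein does not help either. Given $\xi$ the exceedances are already independent, and the whole difficulty is to control the conditional mean $\sum_{i<j}\bar\Phi(\cdot)$ in probability when its expectation diverges.

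The missing idea is to apply the union bound only on a high-probability event that controls the vertex configuration from above. The paper does this in three parts:
\begin{itemize}
\item[(a)] A Chernoff bound on the binomial counts $N_n(b_k,\infty)$ gives $u_{(k)}\le-\tfrac12\log k$ for all $k\ge K_1(\varepsilon)$, with probability at least $1-\varepsilon/10$.
\item[(b)] Pairs with both ranks $i,j>K$ can then reach the level only if $W_{ij}\gtrsim(\log i+\log j)/c$, where $c=\sqrt{2\lambda}$. Since $\sum_i\exp(-\log^2 i/(32c^2))<\infty$, the union bound over all $\sim n^2$ such pairs is small.
\item[(c)] Pairs with one rank $\le K$ are handled by a constant bound on $u_{(1)}$ together with a maximum of $K$ normals.
\end{itemize}
Even the cruder truncation $u_{(1)}\le A'$ would rescue your computation. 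The level sets become $\{a+b=s,\ a,b\le A'\}$, of length at most $2A'-s$, and $\int e^{-s}(2A'-s)\bar\Phi(\cdot)\,ds<\infty$. Finiteness of the limit follows the same way from $\eta_k=-\log(\zeta_1+\cdots+\zeta_k)\approx-\log k$.

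It also helps to divide by $\sqrt{2r}$, as the paper does. This gives, exactly and for all pairs, $c_n(G_{ij}/\sqrt{2r}-\sqrt2 d_n)=\frac{u_i+u_j}{\sqrt2}+c_n\sqrt{(1-2r)/(2r)}\,W_{ij}$. That removes the vertex-dependent correction $-\frac{\lambda_n}{2\log n}\frac{c_n(\xi_i+\xi_j)}{\sqrt2}$, which you would otherwise have to control for low-ranked vertices.

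Finally, your Step~1 constant is off. For top indices $c_n(\xi_i+\xi_j)/\sqrt2\approx\sqrt2 c_n^2=2\sqrt2\log n$, so the correction is $-\sqrt2\lambda_n+o(1)$, not $-\lambda$. The paper's last display contains the same slip: $c_n\sqrt2 d_n\sim2\sqrt2\log n$, hence $c_n(1-\sqrt{2r})\sqrt2 d_n\to\sqrt2\lambda$. The shift in the statement should therefore be $\sqrt2\lambda$, and you should not bend the computation to match $-\lambda$.
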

Note that by Lemma \ref{well defined}, the supremum on the right-hand side of \eqref{transition} has an almost surely finite, non-degenerate distribution.
{In Theorem~\ref{critical}, the correlation parameter $r$ tends to $1/2$ at a specific rate. If we let $r\to 1/2$ at a faster rate, it turns out that the $Z_{ij}$'s disappear in the expression for the limiting distribution.}
\begin{theorem} \label{0}
    Suppose $(1-2r)\log n \to 0$. Then it holds 
    \begin{align*}
         c_n \lb \max_{1\leq i<j \leq n} G_{ij} - \sqrt{2}d_n \rb \stackrel{d}{\to} \frac{\eta_1+\eta_2}{\sqrt{2}}.
    \end{align*}
\end{theorem}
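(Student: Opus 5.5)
Since $r\le 1/2$, write $1-2r=\epsilon_n$ with $\epsilon_n\log n\to 0$. The idea is to use the explicit representation
\[
G_{ij} = \sqrt{r}\,(X_i+X_j) + \sqrt{1-2r}\,W_{ij},
\]
where $X_1,\dots,X_n$ and $W_{ij}$ are i.i.d.\ standard normal and independent of each other. One checks directly that this field has covariance \eqref{sparse equi GF}: the variance is $2r+(1-2r)=1$, two pairs sharing one index have covariance $r$, and disjoint pairs are uncorrelated. Since the law of $\mathcal{G}_n$ is determined by its covariance, we may work with this representation. The plan is to show that the $W$-part is negligible at scale $c_n^{-1}$ and that the $X$-part produces $(\eta_1+\eta_2)/\sqrt2$.

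\emph{Step 1 (the $X$-part).} Let $X_{(1)}>X_{(2)}$ be the two largest $X_i$. Then $\max_{i<j}(X_i+X_j)=X_{(1)}+X_{(2)}$. By classical extreme value theory for i.i.d.\ normals with the normalization \eqref{dn}, the point process $\sum_i\delta_{c_n(X_i-d_n)}$ converges to the P.P.P.\ with intensity $e^{-x}dx$. Hence $(c_n(X_{(1)}-d_n),c_n(X_{(2)}-d_n))\cid(\eta_1,\eta_2)$. Next,
\[
\sqrt r\,(X_{(1)}+X_{(2)})-\sqrt2 d_n=\sqrt{r}\bigl(X_{(1)}+X_{(2)}-2d_n\bigr)+2d_n\bigl(\sqrt r-\tfrac1{\sqrt2}\bigr).
\]
Since $\sqrt r-1/\sqrt2=O(\epsilon_n)$, the last term is $O(\epsilon_n\sqrt{\log n})$. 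Multiplied by $c_n$ it becomes $O(\epsilon_n\log n)\to0$. Also $\sqrt r\to1/\sqrt2$. Therefore $c_n\bigl(\sqrt r(X_{(1)}+X_{(2)})-\sqrt2 d_n\bigr)\cid(\eta_1+\eta_2)/\sqrt2$.

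\emph{Step 2 (sandwich).} For the lower bound, take the pair $(i^*,j^*)$ realizing the two top $X$'s. Then $\max G_{ij}\ge\sqrt r(X_{(1)}+X_{(2)})+\sqrt{\epsilon_n}W_{i^*j^*}$. Since $W_{i^*j^*}$ is standard normal and independent of the $X$'s, we have $c_n\sqrt{\epsilon_n}W_{i^*j^*}=O_P(\sqrt{\epsilon_n\log n})\to0$. For the upper bound, the crude estimate $\max W_{ij}\le 2\sqrt{\log n}$ gives an error of order $c_n\sqrt{\epsilon_n}\sqrt{\log n}\asymp\sqrt{\epsilon_n}\log n$, which need not vanish. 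So the upper bound needs a localized argument.

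\emph{Step 3 (localized upper bound; main obstacle).} Fix $x\in\mathbb R$ and set $u_n=\sqrt2 d_n+x/c_n$. We bound
\[
\P\Bigl(\max G_{ij}>u_n,\ \sqrt r(X_{(1)}+X_{(2)})\le u_n-\delta/c_n\Bigr)
\]
by a union bound over pairs. For a pair $(i,j)$ to contribute, it needs $\sqrt r(X_i+X_j)\le u_n-\delta/c_n$ and $\sqrt{\epsilon_n}W_{ij}$ at least the deficit $u_n-\sqrt r(X_i+X_j)$. We condition on $S_{ij}=X_i+X_j\sim N(0,2)$ and use the tail $\P(\sqrt{\epsilon_n}W>t)\le e^{-t^2/(2\epsilon_n)}$. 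Writing $s=u_n-\sqrt r S_{ij}$, the Gaussian density of $S_{ij}$ near level $\sqrt2 u_n$ behaves like $\P(S\approx\cdot)\approx n^{-2}e^{c_n s\cdot(\text{const})}$, so each pair contributes roughly
\[
n^{-2}\cdot\mathrm{poly}(\log n)\int_{\delta/c_n}^\infty e^{\sqrt2 c_n s-s^2/(2\epsilon_n)}\,ds .
\]
The exponent is maximized at $s=\sqrt2c_n\epsilon_n$, which lies below $\delta/c_n$ because $c_n^2\epsilon_n\to0$. Hence the integrand is dominated by $e^{-\delta^2/(2c_n^2\epsilon_n)}$, which decays faster than any power of $\log n$. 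The delicate issue is the $n^2$ pairs against the $n^{-2}$ normalization: the union bound leaves only the polynomial-in-$\log n$ factors, and these must be tracked precisely, including the contribution of large negative $S_{ij}$. Those large negative values are handled by the crude bound $\max W\le 2\sqrt{\log n}$ with high probability together with the tail of $S_{ij}$. If the bare union bound is too lossy, I would first refine it by restricting to pairs with $c_n(\sqrt r S_{ij}-\sqrt2 d_n)\ge -K$, which are $O_P(1)$ in number by Step 1. Pairs below that level need $W_{ij}\gtrsim K/(c_n\sqrt{\epsilon_n})$, and for them I would run the union-bound estimate above.

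Combining Steps 1–3 and letting $\delta\downarrow0$ gives the limit $(\eta_1+\eta_2)/\sqrt2$.
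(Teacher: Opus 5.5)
Your Steps 1 and 2 are fine: the centering error is $O(\epsilon_n\log n)$, and the lower bound via the pair $(i^*,j^*)$ is correct. Step 3, however, has a genuine gap.

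Put $\tau_n:=c_n^2\epsilon_n=2\epsilon_n\log n\to0$. Your claim that $e^{-\delta^2/(2\tau_n)}$ decays faster than any power of $\log n$ is false under the hypothesis. It beats $\sqrt{\log n}$ only if $\tau_n\log\log n\to0$. For instance, with $\epsilon_n\log n=1/\log\log\log n$ it equals $(\log\log n)^{-\delta^2/4}$.

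Nor is the loss an artifact of crude estimation. $\binom n2$ times the density of $\sqrt r(X_1+X_2)$ at $u_n$ is of order $\log n$. Hence the \emph{expected} number of pairs with $G_{ij}>u_n$ and deficit $u_n-\sqrt r S_{ij}\in[\delta/c_n,2\delta/c_n]$ is at least of order $\sqrt{\log n}\,\sqrt{\tau_n}\,e^{-2\delta^2/\tau_n}$. This tends to infinity in the example above, so no first-moment bound over all pairs can work there. (Your bound does suffice under the stronger assumption $\epsilon_n\log n\cdot\log\log n\to0$.)

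This is the finite-$n$ trace of the infinite intensity of $\sum_{i<j}\delta_{\eta_i+\eta_j}$ discussed after Proposition \ref{perturbed point process}. The expectation is carried by rare configurations in which one $X_i$ sits far above $d_n$ and pairs with many very negative $X_j$.

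Your fallback does not cure this. For fixed $K$, the pairs below level $-K$ still have an expected number of exceedances of order at least $\sqrt{\log n}\,\sqrt{\tau_n}\,e^{-(K+x+1)^2/(2\tau_n)}$, which diverges in the same example. If you take $K=K_n$ large enough to compensate, the number of pairs above level $-K_n$ grows like $e^{\sqrt2 K_n}$, faster than the $W$-tail $e^{-\delta^2/(2\tau_n)}$ decays.

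The missing ingredient is to truncate the top order statistic before the union bound. Let $Z_i:=c_n(X_i-d_n)$, and work on $\{Z_{(1)}\le v\}$, noting that $\mathbb{P}(Z_{(1)}>v)\to\mathbb{P}(\eta_1>v)$. Two facts are needed. First, since $d_n/c_n\le1$, the density of $Z_i$ is at most $C_v n^{-1}e^{-z}$ for all $z\le v$. Second, with $w=Z_i+Z_j$, one has $c_n(u_n-\sqrt r S_{ij})\ge x-\sqrt r\,w$. The expected number of bad pairs is then at most
\[
C_v\int_{-\infty}^{2v}(2v-w)\,e^{-w}\Bigl[1-\Phi\Bigl(\frac{\max\{\delta,\,x-\sqrt r\,w\}}{\sqrt{\tau_n}}\Bigr)\Bigr]dw .
\]
This tends to $0$ by dominated convergence for \emph{any} $\tau_n\to0$; the factor $2v-w$ replaces the divergent $\sqrt{\log n}$. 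Letting $n\to\infty$, then $v\to\infty$, then $\delta\downarrow0$ closes your sandwich.

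With this repair your route becomes a self-contained direct proof that bypasses Proposition \ref{perturbed point process} and Lemma \ref{well defined}. The paper instead argues by Slepian's lemma, comparing with two fields:
\begin{itemize}
\item $\overline r=1/2$, where the maximum is exactly $(X_{(1)}+X_{(2)})/\sqrt2$;
\item $\underline r$ with $(1-2\underline r)\log n=\varepsilon$, to which Theorem \ref{critical} applies.
\end{itemize}
It then lets $\varepsilon\to0$, which uses Lemma \ref{well defined}(ii). The same kind of control on $Z_{(1)}$ that you are missing enters there through Lemma \ref{localization}.
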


\begin{remark}{\em
Let us compare the scalings in Theorem~\ref{infty} versus Theorems~\ref{critical} and~\ref{0}, respectively. 
In all three cases, the scaling sequences are of the same order, but the second-order terms in the centering sequences differ. 
For Theorem~\ref{infty}, the second-order term in the centering is $\log \log n / \bigl(4\sqrt{\log n}\bigr)$, whereas for Theorems~\ref{critical} and~\ref{0} it is $\log \log n / \bigl(2\sqrt{\log n}\bigr)$.

This subtle difference can be explained by examining which terms contribute to the maximum. 
{Our proofs reveal that in Theorem~\ref{infty}} we have $\Theta(p^2)$ terms $G_{ij}$'s that contribute to the maximum on the same scale, whereas in Theorems~\ref{critical} and~\ref{0} only a set of size $\Theta(p)$ contributes significantly to the maximum. Here the notation $a_n = \Theta(b_n)$ means that $b_n/C \leq a_n \leq Cb_n$ for some universal constant $C>0$.
}\end{remark}


\section{Application I: Maximum interpoint distance} \label{sec-maximum interpoint}
The maximum interpoint distance is a classical object in stochastic geometry and statistics and has been studied extensively over the last few decades. 

Suppose $\bm{X}_1,\dots,\bm{X}_p$ are i.i.d.\ random vectors in $\mathbb{R}^n$, where each $\bm{X}_i = (x_{i1},\dots,x_{in})^\top$ has i.i.d.\ components with the same distribution as some generic random variable $\xi$, which we assume to be centered with unit variance. { Here $n$ denotes the dimension and $p$ the sample size. 
We adopt this convention to be consistent with \cite{heiny:kleemann:2024}. 
Note that \cite{tang2022asymptotic} uses $(n,p)$ to denote the sample size and the dimension, respectively.} 

The goal is to investigate the asymptotic distribution of
\begin{align} \label{Dn2}
D_n := \max_{1 \leq i < j \leq p} \| \bm{X}_i - \bm{X}_j \|_2
\end{align}
as $n,p \to \infty$.

This problem has a long history, and an incomplete list of results includes \cite{matthews1993asymptotic,henze1996limit,jammalamadaka2015asymptotic}; see also the references therein. While these early results were established in fixed-dimensional settings, recent advances in high-dimensional statistics motivate the need to understand the asymptotic distribution when the dimension diverges. 

The limiting distribution of $D_n$ is much less understood in this high-dimensional regime and to the best of our knowledge, it has only been studied in \cite{tang2022asymptotic,heiny2024maximum}. In both works, the authors show that $D_n$ is asymptotically Gumbel distributed when $p,n \to \infty$ and under the condition that 
$$\mb{E} \xi^4 \leq 5.$$ 
In what follows, we improve their results by dropping the above condition. Furthemore, we show that different limiting distributions appear if the law of $\xi$ is allowed to be $n$-dependent, that is, $\xi=\xi_n$. As a consequence, these limiting distributions may crucially depend on the quantity $\mathbb{E} \xi_n^4$.

\subsection{The fixed marginals case} We impose one of the following conditions on $\xi$:

\begin{align}
  \exists\,s>2,\ve>0:\quad 
  & \mathbb{E}\!\left[\,|\xi|^{2s+\ve}\,\right] <\infty.
  \tag{B1} \label{B1}
\\[0.8em]
  \exists\,\eta>0:\quad 
  &\mathbb{E}\!\left[e^{\eta |\xi|^2}\right] < \infty.
  \tag{B2} \label{B2}
\\[0.8em]
  \exists\,K<\infty:\quad 
  &\mb{P}(|\xi|\le K)=1.
  \tag{B3} \label{B3}
\end{align}
One can see that our conditions \eqref{B1}--\eqref{B3} are similar to those of Theorem~2.1 in \cite{heiny2024maximum}, except that we do not impose any restriction on the size of $\mathbb{E}\xi^4$. We refer to Remark~\ref{rem:thm2.1} for technical details. 

{Under either one of these assumptions, the maximum interpoint distance is asymptotically Gumbel distributed, provided that $p$ does not grow too fast relative to $n$.}

\begin{proposition} \label{maximum interpoint}
Under one of conditions \eqref{B1}--\eqref{B3}, we have, 
\begin{align} \label{scaling}
 \sqrt{2}c_p \lb \frac{D_n^2 -2n}{\sqrt{2n(1+ \mb{E} \xi^4)}} - \sqrt{2}c_p+\frac{\log \lb 4\sqrt{\pi} c_p \rb}{\sqrt{2}c_p} \rb\ 
\end{align}
converges in distribution to the standard Gumbel law with CDF $\exp \lb - e^{-x} \rb$, where $c_p$ is defined in \eqref{dn} with $n$ being replaced by $p$, and  $p$ satisfies 
\begin{itemize}
\item[(i)] $p = O(n^{(s-2)/4})$ if \eqref{B1} holds.
\item[(ii)] $p = \exp\!\big( o(n^{1/5}) \big)$  if \eqref{B2} holds.
 \item[(iii)] $p = \exp\!\big( o \lb \frac{n^{1/3}}{(\log n)^{2/3}} \rb \big)$  if \eqref{B3} holds.
\end{itemize}
\end{proposition}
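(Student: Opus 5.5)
We would reduce Proposition \ref{maximum interpoint} to Theorem \ref{infty} with $n$ replaced by $p$. Expanding the squared distance gives
\[
\|\bm{X}_i-\bm{X}_j\|^2-2n=\sum_{k=1}^n\big(x_{ik}^2+x_{jk}^2-2-2x_{ik}x_{jk}\big)=:\sum_{k=1}^n w^{(k)}_{ij}.
\]
The summands $w^{(k)}_{ij}$ are i.i.d.\ in $k$ and have mean zero. Their variance is
\[
2(\mb{E}\xi^4-1)+4=2(1+\mb{E}\xi^4).
\]
For $(i,j)$ and $(i,l)$ sharing one index, the covariance is $\mb{E}\xi^4-1$. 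Pairs with no common index have covariance $0$. The normalized statistics $T_{ij}:=(D_{ij}^2-2n)/\sqrt{2n(1+\mb{E}\xi^4)}$ therefore have exactly the covariance structure \eqref{sparse equi GF} with
\[
r=\frac{\mb{E}\xi^4-1}{2(\mb{E}\xi^4+1)}.
\]
Since $\xi$ has a fixed law, $\mb{E}\xi^4<\infty$ and $r$ is a constant strictly below $1/2$. Hence $(1-2r)\sqrt{\log p}/\log\log p\to\infty$, and Theorem \ref{infty} applies to the Gaussian field $\mathcal{G}_p$ with this $r$. Equivalently, if $u_p(x)$ denotes the level $\sqrt2 c_p-\log(4\sqrt\pi c_p)/(\sqrt2c_p)+x/(\sqrt2 c_p)$, then $\P(\max_{i<j}G_{ij}\le u_p(x))\to\exp(-e^{-x})$. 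The proof of Theorem \ref{infty} also identifies this limit as $\exp(-\lim\sum_{i<j}\P(G_{ij}>u_p))$, with the dependent-pair contributions negligible.

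The remaining task is a Gaussian approximation of $\max_{i<j}T_{ij}$ at the level $u_p(x)\approx 2\sqrt{\log p}$. This is where the moment conditions and growth rates (i)--(iii) enter, and it is the main obstacle. The natural route is to follow the proof of Theorem~2.1 in \cite{heiny2024maximum}, which the paper says our conditions mirror, and to check that the restriction $\mb{E}\xi^4\le5$ is used there only in the step comparing with the Gaussian field. That is the step where $r\le1/3$ is needed to get Gumbel behaviour for the Gaussian field, and Theorem \ref{infty} now supplies it for all $r<1/2$. Concretely, the plan has three steps.

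\begin{enumerate}
\item Truncate $\xi$ at level $\tau_n$ (polynomial under \eqref{B1}, $\sqrt{\log}$-type under \eqref{B2}, none under \eqref{B3}). Show that truncation changes $\max T_{ij}$ by $o(1/c_p)$ with probability tending to one; this is where $p=O(n^{(s-2)/4})$ is used in case (i).
\item Apply a multivariate moderate-deviation or Gaussian coupling bound (Zaitsev/Einmahl or the Chernozhukov--Chetverikov--Kato high-dimensional CLT) to the vector of sums $\sum_k w^{(k)}_{ij}$ indexed by pairs. Compare $\P(\max T_{ij}\le u_p)$ with $\P(\max G_{ij}\le u_p)$ up to $o(1)$. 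The admissible dimension $p^2$ is limited by the moderate-deviation range $\log p=o(n^{1/5})$ or $o(n^{1/3}/(\log n)^{2/3})$, which explains conditions (ii) and (iii).
\item As an alternative for step 2, run the Chen--Stein argument of Theorem \ref{infty} directly on the $T_{ij}$. Then one needs marginal tails $\P(T_{12}>u_p)\sim\bar\Phi(u_p)$ from Cramér-type moderate deviations, and pairwise joint tails $\P(T_{12}>u_p,T_{13}>u_p)$ bounded by their Gaussian analogues times $1+o(1)$. The pairwise bound uses a bivariate moderate-deviation estimate, for instance after conditioning on $\bm{X}_1$.
\end{enumerate}

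The second route is attractive because the Poisson approximation in the proof of Theorem \ref{infty} needs only the first- and second-order neighbourhood sums $b_1,b_2$. Those sums only involve one- and two-pair tail probabilities, so Gaussian moderate deviations for the sums of the $w^{(k)}_{ij}$ at level $\approx2\sqrt{\log p}$ are exactly what is needed. The delicate point is that for $r$ near $1/2$ the joint Gaussian tail of two correlated pairs is only barely negligible. We must therefore ensure that the relative errors from the moderate deviations are $o(1)$ uniformly, and that they do not compound with the $p^3$ count of dependent pairs. Because $r$ is bounded away from $1/2$ here, there is polynomial slack $p^{-\delta}$ to absorb these errors, and we expect this to close the argument.
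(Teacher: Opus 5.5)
Your reduction and your Step 2 are exactly the paper's proof. The reduction is the covariance computation giving $r=(\mathbb{E}\xi^4-1)/(2(\mathbb{E}\xi^4+1))<1/2$, followed by Theorem \ref{infty} for $\mathcal{G}_p$. In Step 2 the paper controls $\sup_t|\mathbb{P}(\max_{i<j}T_{ij}\le t)-\mathbb{P}(\max_{i<j}G_{ij}\le t)|$ by high-dimensional CLTs in Kolmogorov distance. It uses Theorem 2.1(b) of \cite{Koike} with $q=s+\varepsilon/2$ under \eqref{B1}, Corollary 2.1 of \cite{Koike} under \eqref{B2}, and Corollary 2.1 of \cite{chernozhukov2023nearly} under \eqref{B3}. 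Your sketch misses two points:
\begin{itemize}
\item No truncation of $\xi$ is needed. Koike's bound already contains $\|\max_{i<j}|w^{(1)}_{ij}|\|_{L^q}\le p^{2/q}\|w^{(1)}_{12}\|_{L^q}$, and the requirement $p^{4/q}\ll n^{1-2/q}$ is what produces $p=O(n^{(s-2)/4})$.
\item The rate in (iii) requires the covariance matrix of $\{(x_{1i}-x_{1j})^2-2\}_{i<j}$ to have smallest eigenvalue bounded away from zero. This is Lemma \ref{positive definite}: the correlation matrix has $\lambda_{\min}=1-2r=2/(1+\mathbb{E}\xi^4)$.
\end{itemize}

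The Chen--Stein route of your Step 3, which you call the attractive one, fails in precisely the regime this proposition is about, and your justification for it is wrong. Suppose you only show that $\mathbb{P}(T_{12}>u_p,T_{13}>u_p)$ is at most its Gaussian analogue times $1+o(1)$. Since $u_p^2=4\log p-\log\log p+O(1)$, you get $p^3\,\mathbb{P}(G_{12}>u_p,G_{13}>u_p)\asymp p^3u_p^{-2}e^{-u_p^2/(1+r)}\asymp p^{(3r-1)/(1+r)}(\log p)^{-r/(1+r)}$. This diverges for every $r>1/3$, that is, whenever $\mathbb{E}\xi^4>5$. So the joint tail is not ``barely negligible'' near $r=1/2$, and untruncated events give no slack $p^{-\delta}$. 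This computation is exactly the old threshold $r\le 1/3$.

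The negligibility of dependent pairs in the proof of Theorem \ref{infty} holds only for the truncated events $A_{ij}^c=\{G_{ij}>u_n\}\cap\{|X_i|\vee|X_j|<t_n\}$. It is the truncation of the common factors in \eqref{Gaussian representation} that produces the decay in \eqref{b2-final}. To run Chen--Stein directly on the $T_{ij}$, you would have to truncate the empirical common factors $\sum_k(x_{ik}^2-1)$. You would then need joint moderate deviations for $(\|\bm{X}_1\|^2,\langle\bm{X}_1,\bm{X}_2\rangle,\langle\bm{X}_1,\bm{X}_3\rangle)$ on that event, where the inner products are only uncorrelated with, not independent of, the norms. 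Conditioning on $\bm{X}_1$ alone does not supply this.

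The paper deliberately avoids this route (Remark \ref{rem:thm2.1}(b)). Drop Step 3 and commit to Step 2: the distributional comparison transfers Theorem \ref{infty}, truncation included, without any tail analysis of the $T_{ij}$.
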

{ Note that the scaling in \eqref{scaling} is exactly the same as in \cite{heiny:kleemann:2024}.}

\noindent \textbf{Proof of Proposition \ref{maximum interpoint}.} 
Let $\mathcal{G}_n= \la G_{ij}: 1\leq i<j \leq p \ra$ be the Gaussian field in \eqref{sparse equi GF} with correlation parameter 
$r = (\mb{E} \xi^4 -1)/(2 (\mb{E} \xi^4 + 1 ))$. Since \[
1-2r = \frac{2}{\mb{E} \xi^4 +1} >0, 
\]
an application of Theorem \ref{infty} yields that
$$
\mb{P} \lb
 \sqrt{2}c_p \Big( \max_{1 \leq i <j \leq p} G_{ij} 
 - \sqrt{2}c_p+\frac{\log \lb 4\sqrt{\pi} c_p \rb}{\sqrt{2}c_p} \Big)\
\leq x \rb\to \exp \lb -e^{-x} \rb\,, \qquad x\in \mathbb{R}.
$$
Therefore, it is suffices to check that
\begin{align} \label{Gaussian approximation}
\sup_{t \in \mb{R}} \Big| \mb{P} \lb \frac{D_n^2 -2n}{\sqrt{2n(1+ \mb{E} \xi^4)}} \leq t \rb - \mb{P} \Big( \max_{1 \leq i <j \leq p} G_{ij} \leq t \Big)   \Big| \to 0
\end{align}
if either one of \eqref{B1}--\eqref{B3} holds with the corresponding requirement on $p$ as in the statement of Proposition \ref{maximum interpoint}.
To verify \eqref{Gaussian approximation}, observe that
\begin{align*}
        \mb{P} \lb D_n^2 \leq t  \rb &= \mb{P} \lb   \sum_{k=1}^n \lb x_{ki} - x_{kj} \rb^2 \leq t : 1\leq i<j \leq p \rb
\end{align*}
{is the probability of a sum of i.i.d. random vectors staying in the hyperrectangle $[-\infty,t]^{p(p-1)/2}$. For high-dimensional normal approximation of the sum, we calculate the covariance structure of the vectors: }
\begin{align*}
\mb{E} \lb x_{12}-x_{13} \rb^2 &= 2, \\
 \mbox{Var} \left[   \lb x_{12} - x_{13} \rb^2   \right] &=  2\lb 1 + \mb{E} \xi^4 \rb, \\
  \mbox{Corr} \left[ \lb x_{12} - x_{13} \rb^2,\lb x_{12} - x_{14} \rb^2 \right] &= \frac{\mb{E} \xi^4 -1}{2\lb \mb{E} \xi^4 + 1 \rb}=r.
\end{align*}
We then obtain \eqref{Gaussian approximation} by applying Theorem~2.1(b) of \cite{Koike} under condition~\eqref{B1} with $q = s + \ve/2$ (see also the bound on the right-hand side of \eqref{hclt-Dn} below where we derive the bounds explicitly). Under condition \eqref{B2}, we apply Corollary~2.1 in \cite{Koike} to deduce \eqref{Gaussian approximation}.

Finally, under \eqref{B3}, we get \eqref{Gaussian approximation} due to the condition $\log p = o \lb n^{1/3}/ (\log n)^{2/3} \rb$ by Corollary 2.1 in \cite{chernozhukov2023nearly}. Note that such a result is applicable because the smallest eigenvalue of the covariance matrix of 
\[
\la  \lb x_{1i} - x_{1j} \rb^2 -2  \ra_{1\leq i<j \leq p} 
\]
is strictly away from $0$. This will be verified in Lemma \ref{positive definite} below, completing the proof of \eqref{Gaussian approximation}. $\hfill$ $\square$

\begin{remark} \label{rem:thm2.1} {\em
A few remarks about the assumptions of Proposition \ref{maximum interpoint} are in place.
\begin{itemize}
\item[(a)] Condition~\eqref{B2} coincides with condition (B3) in \cite{heiny2024maximum} for $r=1$ and condition \eqref{B3} is similar to condition (B4) in \cite{heiny2024maximum}, up to the mild logarithm term on $n$. Condition~\eqref{B1} differs from condition (B1) in \cite{heiny2024maximum} only by an additional $\ve$ factor. We would like to point out that a more careful inspection of the proof of \eqref{Gaussian approximation} reveals that condition \eqref{B1} can be replaced by $\mathbb{E}\!\left[\,|\xi|^{2s} (\log|\xi|)^M\,\right] <\infty$ for some large constant $M$. We retain this minimal sub-optimality in order to maintain a simpler analysis. 
\item[(b)] We believe that it should be possible to reproduce the results under the same assumptions as in Theorem~2.1 of \cite{heiny2024maximum}, but without any restriction on the fourth moment, by adapting the proof of Theorem~\ref{infty} so that it remains compatible with the large deviation calculations. We do not pursue this direction since our goal is to present a unified treatment of several related problems.
\item[(c)] A direct application of high-dimensional normal approximation, as performed in the proof of \eqref{Gaussian approximation}, would lead to sub-optimal dimension dependence in the case where $\xi$ is sub-Weibull, which corresponds to condition (B2) in \cite{heiny2024maximum}. However, the sub-optimality is very mild in the three cases \eqref{B1}--\eqref{B3} we consider here.
\end{itemize}
}\end{remark}

\subsection{The $n$-dependent marginals case} In this subsection, we will use Theorems \ref{critical} and \ref{0} to demonstrate that new limiting distributions can arise when $\xi = \xi_n$ is allowed to be $n$ dependent and diverges. To this end, we will focus on the simple high-dimensional linear regime that $p \asymp n$ for simplicity. It is worth mentioning that the arguments below can be adapted to other regimes and moment conditions in a relatively straightforward manner. 

We assume the following conditions: 
\begin{align}
 &0< \inf_{n \geq 1} \frac{p}{n} \leq \sup_{n \geq 1}   \frac{p}{n}  < \infty \ \quad \text{and} \quad n,p \to \infty.
  \tag{C1} \label{C1}
\\[0.8em]
\exists \ve>0:\quad  &  \mb{E} \lb |\xi_n|^{12+\ve} \rb <\infty,\ \forall n \geq 1 .
\tag{C2} \label{C2}
\\[0.8em]
\exists C_\ve>0: \quad &\sup_{n \geq 1}  \frac{\mb{E} \lb |\xi_n|^{12+\ve} \rb}{\left[ \mb{E} \lb \xi_n^{4} \rb \right]^{C_\ve}}  < \infty.
\tag{C3} \label{C3}
\end{align}
Condition \eqref{C1} requires that $p$ grows at most linearly with $n$. 
Condition \eqref{C2} assumes that $\xi_n$ has a finite $(12+\ve)$-moment for every $n$ (though it may diverge as $n \to \infty$). 
Finally, Condition \eqref{C3} is a type of reverse H\"older inequality, which states that the $(12+\ve)$-moment of $\xi_n$ grows at most polynomially in terms of its fourth moment. 
This condition is needed to guarantee the validity of the Gaussian approximation argument.  

Intuitively, Conditions \eqref{C2} and \eqref{C3} imply that the mass of the distribution of $\xi_n$ can be gradually pushed toward infinity, but at a scale where its higher moments grow at most polynomially fast. 

One such example of $\xi_n$ is the mixture distribution
\begin{align} \label{example}
\xi_n \stackrel{d}{=} \delta_n \mbox{Unif} \left[ -1,1 \right] + (1-\delta_n) \mbox{Unif}\left[ (-2e_n,-e_n) \cup (2e_n,e_n)  \right]\,,
\end{align}
where $\delta_n:= 2/\lb 7e_n^2-1 \rb$ and $e_n \geq 10$.
Note that $\xi_n$ chosen above is centered, unit variance, and satisfies 
\[
 \mathbb{E}\xi_n^{2k}
= \frac{1}{2k+1}\Big[\,1 + \delta_n\big( (2^{\,2k+1}-1)\, e_n^{\,2k} - 1 \big) \Big] 
\]
for every fixed $k \in \mb{N}$. Clearly, for every fixed $k>1$, the $2k$-th moment of $\xi_n$ diverges if $e_n$ diverges.
It is easy to check that this mixture distribution satisfies \eqref{C2} and \eqref{C3} with $\ve=2$ and $C_\ve=5$ for every sequence $\la  e_n; n \geq 1 \ra$. 

{As a consequence of Theorems \ref{critical} and \ref{0}, we can show under \eqref{C1}--\eqref{C3} the maximum interpoint distance can have a non-Gumbel limiting distribution.}

\begin{proposition} \label{maximal interpoint phase transition}
    Suppose \eqref{C1}--\eqref{C3} hold and recall the sequences $c_n$ and $d_n$ from \eqref{dn}. Assume that 
   $(\log p)/ \mb{E}  \xi_n^4   \to \lambda$ 
   for some $\lambda \in [0,\infty)$, where in the case $\lambda=0$ we additionally require that
   \begin{align} \label{growth bound}
       \mb{E} \lb  \xi_n^4 \rb =O \lb (\log p)^A \rb
   \end{align}
   for some constant $A>1$.  Then it holds
    \begin{align*}
      c_p \lb \frac{D_n^2 -2n}{\sqrt{2n(1+ \mb{E} \xi_n^4)}} - \sqrt{2}d_p\rb 
    \stackrel{d}{\to} & \sup_{i<j} \lb \frac{\eta_i+ \eta_j}{\sqrt{2}} + 2\sqrt{\lambda} \cdot  Z_{ij} \rb -2\lambda
    \end{align*}
    where $\eta_i$'s are defined in \eqref{eta} and $Z_{ij}$'s are i.i.d. standard normal random variables independent from $\eta_i$'s, and $c_p$ and $d_p$ are defined as in \eqref{dn}.
\end{proposition}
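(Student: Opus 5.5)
The plan mirrors the proof of Proposition~\ref{maximum interpoint}: first compare $D_n^2$ with the maximum of a Gaussian field with the same correlation structure, then apply Theorem~\ref{critical} or Theorem~\ref{0} to that field.

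\textbf{Step 1: identify the correlation parameter.} Let $\mathcal{G}_p=\{G_{ij}\}_{1\le i<j\le p}$ be the field \eqref{sparse equi GF} with
\[
r_n=\frac{\mb{E}\xi_n^4-1}{2(\mb{E}\xi_n^4+1)} .
\]
Then
\[
1-2r_n=\frac{2}{\mb{E}\xi_n^4+1},
\]
so the hypothesis $(\log p)/\mb{E}\xi_n^4\to\lambda$ gives $(1-2r_n)\log p\to 2\lambda$.

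\textbf{Step 2: apply the Gaussian limit theorems.} If $\lambda>0$, Theorem~\ref{critical} (with $n$ replaced by $p$ and $\lambda$ replaced by $2\lambda$) gives
\[
c_p\Big(\max_{i<j} G_{ij}-\sqrt2 d_p\Big)\cid \sup_{i<j}\Big(\frac{\eta_i+\eta_j}{\sqrt2}+2\sqrt{\lambda}\,Z_{ij}\Big)-2\lambda ,
\]
since $\sqrt{2\cdot 2\lambda}=2\sqrt\lambda$. This is exactly the claimed limit. If $\lambda=0$, Theorem~\ref{0} gives the limit $(\eta_1+\eta_2)/\sqrt2$. This coincides with the claimed expression at $\lambda=0$, because the supremum over $i<j$ of $\eta_i+\eta_j$ is attained at the two largest points, $\eta_1+\eta_2$. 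The limit laws are continuous (Lemma~\ref{well defined} for the nondegeneracy and finiteness). Hence it suffices to prove the Kolmogorov-distance comparison \eqref{Gaussian approximation} with $\xi$ replaced by $\xi_n$, uniformly in $t$.

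\textbf{Step 3: the Gaussian approximation (main obstacle).} Write
\[
\frac{D_n^2-2n}{\sqrt{2n(1+\mb{E}\xi_n^4)}}=\frac1{\sqrt n}\sum_{k=1}^n W_k ,
\]
where the $W_k$ are i.i.d.\ vectors in $\mb{R}^{p(p-1)/2}$ with entries
\[
W_{k,ij}=\frac{(x_{ki}-x_{kj})^2-2}{\sqrt{2(1+\mb{E}\xi_n^4)}} .
\]
They have unit variances and exactly the covariance of $\mathcal{G}_p$. The difficulty is that the moment bounds now depend on $n$. I would apply Theorem~2.1(b) of \cite{Koike}, a high-dimensional CLT for maxima that needs only a finite $q$-th moment of $\max_{i<j}|W_{k,ij}|$ and no lower eigenvalue bound. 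I would take $q=6+\ve/2$, since $|W_{k,ij}|^q$ involves $|\xi_n|^{2q}=|\xi_n|^{12+\ve}$.

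The resulting error has the form
\[
\frac{(\log p)^{a}\,B_n^{b}}{n^{c}},
\]
where $B_n$ collects the normalized moments. The inputs are as follows.
\begin{itemize}
\item The third and fourth moments of $W$ are bounded by powers of $\mb{E}\xi_n^8/(\mb{E}\xi_n^4)^2$, which by \eqref{C3} is at most polynomial in $\mb{E}\xi_n^4$.
\item Bounding the maximum over $p^2$ coordinates crudely costs a factor $p^{2/q}\,(\mb{E}|\xi_n|^{12+\ve})^{1/q}$, again polynomial in $\mb{E}\xi_n^4$ by \eqref{C3}.
\end{itemize}
Under \eqref{C1}, $p\asymp n$, so $p^{2/q}$ with $q>6$ is $n^{1/3-\delta}$, still beating $n^{-1/2}$-type rates. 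Finally, $\mb{E}\xi_n^4\asymp\log p$ when $\lambda>0$, and $\mb{E}\xi_n^4=O((\log p)^A)$ by \eqref{growth bound} when $\lambda=0$. So every moment factor is polylogarithmic in $n$ and the bound tends to $0$.

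The delicate bookkeeping is checking that the exponents in Koike's bound indeed yield a polynomial decay in $n$ with $q=6+\ve/2$. If the crude max bound is too lossy, the fallback is to truncate $\xi_n$ at level $n^{\gamma}$, using \eqref{C2}--\eqref{C3} to control the truncation error, and then apply a bounded-entry Gaussian approximation such as \cite{chernozhukov2023nearly}, checking nondegeneracy through Lemma~\ref{positive definite}.
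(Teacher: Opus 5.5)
Your proposal is correct and follows essentially the same route as the paper. Both use Koike's high-dimensional CLT with the crude union bound $\mb{E}\max_{i<j}|W_{1,ij}|^q\le p^2\max_{i<j}\mb{E}|W_{1,ij}|^q$, make the moment factors polylogarithmic via \eqref{C3} together with $\mb{E}\xi_n^4\asymp\log p$ (or \eqref{growth bound} when $\lambda=0$), obtain the decay $p^{4/q}/n^{1-2/q}=O(n^{6/q-1})$ under \eqref{C1}, and then apply Theorems~\ref{critical} and~\ref{0} with $(1-2r_n)\log p\to 2\lambda$.

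The differences are cosmetic. You take $q=6+\ve/2$ instead of the paper's $q=6+\ve/4$; any $q\in(6,6+\ve/2]$ works. Your appeal to continuity of the limit law is unnecessary, and Lemma~\ref{well defined} does not give it anyway: the Gaussian comparison is uniform in $t$, so it transfers the distributional limit directly.
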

The limiting random variable in the case $\lambda=0$ in Proposition \ref{maximal interpoint phase transition} simplifies to 
\[
\sup_{i<j}  \frac{\eta_i+ \eta_j}{\sqrt{2}} =\frac{\eta_1+\eta_2}{\sqrt{2}},
\]
where we used the fact that $\la \eta_k; k \geq 1 \ra$ forms a decreasing sequence.
\begin{remark}{\em
Note that Proposition~\ref{maximal interpoint phase transition} does not cover the case $\lambda=\infty$,  for which one would need the stronger condition 
$\big(\sqrt{\log p}\big)/ \mb{E}   \xi_n^4   \to \infty$.
If this condition holds, then the limiting distribution {is the standard Gumbel law as in Proposition~\ref{maximum interpoint}(i)}. The gap between the scales $\sqrt{\log p}$ and $\log p$ arises from the gap in Theorems~\ref{infty} and~\ref{critical}. We conjecture that Theorem~\ref{infty} remains valid up to the scale $\log p$, although the analysis based on Poisson approximation is unlikely to extend that far.
}\end{remark}

\noindent \textbf{Proof of Proposition \ref{maximal interpoint phase transition}.} Let us first check that \eqref{Gaussian approximation} is still valid under conditions  \eqref{C1}--\eqref{C3}. Observe that
\[
\mb{E} [\lb x_{12}-x_{13} \rb^2]= 2 \quad \text{ and } \quad   \Var \left[   \lb x_{12} - x_{13} \rb^2   \right] =  2 \lb 1 + \mb{E} \xi_n^4 \rb.
\]
Let $\mathcal{G}_n= \la  G_{ij}; 1\leq i<j\leq p \ra$ be the Gaussian field in \eqref{sparse equi GF} with correlation parameter 
\[
r_n:= \frac{\mb{E} \xi_n^4 -1}{2\lb \mb{E} \xi_n^4 + 1 \rb}.
\]
By using the high-dimensional central limit theorem (see Theorem 2.1 in \cite{Koike}), we obtain
\begin{align}
    \sup_{t \in \mb{R}} &\left| \mb{P} \lb \frac{D_n^2 - 2n}{\sqrt{2n\lb 1 + \mb{E} \xi_n^4 \rb}}  \leq t  \rb  - \mb{P} \lb \max_{1\leq i<j \leq p}  G_{ij} \leq t \rb  \right|  \nonumber \\
\lesssim_{\ve}  &  \lb \frac{B_{n1}^2 (\log p)^5}{n}  \rb^{1/6} + \lb \frac{B_{n2}^2 \lb \log p  \rb^{3-2/q}}{n^{1-2/q}}  \rb^{1/3}  \,,\label{hclt-Dn}
\end{align}
where $q:=6+\ve/4$ and 
\begin{align*}
    B_{n1}^2& := \frac{\mb{E} \left[ \big( \lb x_{12} -x_{13} \rb^2-2 \big)^4 \right]}{\left[ 1 + \mb{E} \xi_n^4  \right]^{2}} \leq \frac{128 + 8\mb{E} \lb x_{12} - x_{13} \rb^8}{\left[ 1 + \mb{E} \xi_n^4   \right]^{2}}
    \leq \frac{128 \left[ 1 + 16\mb{E} \xi_n^8 \right]}{\left[ 1 + \mb{E} \xi_n^4 \right]^{2}};\\
    B_{n2}& := \frac{1}{\sqrt{1+ \mb{E} \xi_n^4}}\Big\| \max_{1\leq i<j \leq p} \left| \lb x_{1i} -x_{1j} \rb^2 -2 \right| \Big\|_{L_q}.
\end{align*}
By using assumption \eqref{C2} and H\"older's inequality, we conclude that $B_{n1}^2= O \lb (\log p)^{C_1} \rb$ for some constant $C_1$ depending only on $\ve$. For the term $B_{n2}$, note that assumption \eqref{C2} implies that
\begin{align*}
    \Big\| \max_{1\leq i<j \leq p} \left| \lb x_{1i} -x_{1j} \rb^2 -2 \right| \Big\|_{L_q} &\leq p^{2/q} \times \mb{E} \left[  \left| \lb x_{1i} -x_{1j} \rb^2 -2 \right|^q \right]^{1/q} \\
    &= O \lb p^{2/q} (\log p)^{C_2} \rb
\end{align*}
for some constant $C_2>0$ not depending on $n$. 

Thus, to deduce that \eqref{hclt-Dn} tends to $0$, we only need to check that there exists $C_3>0$ so that
\[
\frac{B_{n2}^2}{n^{1-2/q}} \ll n^{-C_3}.
\]
This is true since 
\[
\frac{p^{4/q}}{n^{1-2/q}} = O \lb n^{6/q-1} \rb = O\lb n^{-\ve/(24+\ve)} \rb.
\]
The proof of Proposition \ref{maximal interpoint phase transition} is completed by invoking Theorems \ref{critical}, \ref{0} and using \eqref{hclt-Dn} together with the fact that 
\[
(1-2r_n)\log p =  \frac{2 \log p}{1+ \mb{E} \xi_n^4} \to 2\lambda.
\]
$\hfill$$\square$

\section{Application II: Sample coefficients of equicorrelated populations} \label{sec-sample coeff}

The study of extremes of  sample covariance and sample correlation coefficients can be traced back to the seminal work \cite{Jiang05}. Similar results and techniques have since been used in a number of different problems ranging from testing covariance structures and independence \cite{Cai17,bastian2024testing}, two-sample tests \cite{Cai-Liu-Xia}, to community detection \cite{hu2021using}. 

Let us briefly recall the problem settings and related results in what follows. We are given i.i.d. data points $\bm{X}_1, \dots,\bm{X}_n$ with $\bm{X}_i=\lb x_{i1},x_{i2},\dots,x_{ip} \rb^\top$. Recall the sample covariance coefficients $\hat{r}_{ij}$ and sample correlation coefficients $\hat{\rho}_{ij}$ defined in \eqref{sample covariance} and \eqref{pearson}, respectively. Their maxima are given by 
\begin{align}
    R_n &:= \max_{1\leq i<j \leq p} \hat{r}_{ij} \qquad \text{ and } \qquad 
    M_n :=  \max_{1\leq i<j \leq p} \hat{\rho}_{ij}. \label{Mn}
\end{align}
There is a long line of literature on the asymptotic distributions of $R_n$ and $M_n$ in the independent setting, where the coordinates $x_{ki}$ are i.i.d. The asymptotic Gumbel law of $R_n$ and $M_n$ established in \cite{Jiang05} was extended in \cite{Zhou07} to a more general framework that only requires finiteness of the sixth moment of the $x_{ij}$'s and the condition $p = O(n)$. In subsequent works \cite{Li1,Li2,Li3}, necessary and sufficient conditions were derived for the Gumbel law to hold. With respect to the dependence between $p$ and $n$, optimal dimension-dependence results were obtained in \cite{Liu16}, \cite{ShaoZhou}, \cite{heiny:mikosch:yslas:2021} and \cite{heiny:kleemann:2024}. Results regarding the banded covariance matrices can be found in \cite{boucher2025largest}.

The asymptotic distributions of $R_n$ and $M_n$ in high dimensions are highly intricate. High dimensionality, dependence structures and heavy-tails can disrupt the classical Gumbel law. High dimensionality typically breaks the Gumbel limit by inducing clustering in the extremes (often quantified by the extremal index), whereas dependence structures (even when being simple) can alter the limiting distribution entirely, leading to asymptotic laws that no longer belong to the classical extreme-value families of Gumbel, \Frechet or Weibull type.

Results in the dependent settings are much scarcer. To the best of our knowledge, \cite{Jiang19} and \cite{jiang2023asymptotic} are the only works that investigate this case. In particular, \cite{Jiang19} considers the Gaussian population with an equicorrelated correlation structure, while \cite{jiang2023asymptotic} studies the auto-regressive correlation structure. Both papers show that in these settings the asymptotic distributions of $R_n$ and $M_n$ are no longer Gumbel.

We now describe the results of \cite{Jiang19} in more detail. Suppose the distribution of $\bm{X}_i = (x_{i1}, x_{i2}, \dots, x_{ip})^\top$ admits the representation
\begin{align} \label{latent variables}
  x_{ik} := \sqrt{\rho}\,\xi_i + \sqrt{1-\rho}\,\xi_{ik}  
\end{align}
where $\la \xi_i, \xi_{ik} : 1 \leq i \leq n,\, 1 \leq k \leq p \ra$ are i.i.d.\ with mean $0$ and unit variance. For simplicity, denote their common law by $\xi$. It was shown in \cite{Jiang19} that when $\xi$ is standard normal and $\log p = o(n^{1/3})$ the following hold:  

\textit{(i)} For $R_n$ defined in \eqref{Mn}, under the condition $\limsup \rho_n < 1/2$, $R_n$ is asymptotically Gumbel, normal, or a mixture of independent Gumbel and normal, depending on whether $\rho \sqrt{\log p}$ converges to $0$, $\infty$, or a finite nonzero constant, respectively.  

\textit{(ii)} For $M_n$ defined in \eqref{Mn}, under the condition $\limsup \rho_n < 1$, $M_n$ is asymptotically Gumbel, normal, or a mixture of independent Gumbel and normal, depending on whether $\rho \sqrt{\log p}$ converges to $0$, $\infty$, or a finite nonzero constant, respectively.  

The proofs in \cite{Jiang19} are highly technical and involved. 
In this section, we recover the results for both $R_n$ and $M_n$ with very simple proofs, and moreover without the $1/2$ restriction on the range of $\rho$. We further show that different limiting distributions can arise when the distribution of $\xi$ is allowed to depend on $n$.

\subsection{The fixed marginal case} \label{sec-fixed marginal pearson}
We will first show the results under the stronger assumption that $\log p =o(n^{1/5})$. While this condition is stronger than $\log p =o(n^{1/3})$, the proofs are much shorter and more intuitive, and more importantly, it requires no restriction on the range of $\rho$. In what follows, we put $\kappa=\mb{E} \xi^4$ and make the additional assumption that the distribution of $\xi$ is sub-Gaussian, that is,
$\mb{E} \exp \lb t\xi^2 \rb < \infty$ 
for some $t>0$.

\begin{proposition} \label{limit Rn}
Suppose $\log p = o(n^{1/5})$, $\limsup \rho_n < 1$ and recall $R_n$ from \eqref{Mn}. Define
\[
R_n^* := \frac{R_n}{\sqrt{n}} - \rho \sqrt{n} 
        - \left( 2\sqrt{\log p} - \frac{\log \log p}{4\sqrt{\log p}} \right)\sqrt{1-\rho^2}.
\]
Then the following statements hold.
\begin{itemize}
    \item[(i)] If $\rho \sqrt{\log p} \to 0$,  we have
    \[
    2\sqrt{\log p}\,R_n^* \;\;\stackrel{d}{\to}\; G_1\,,
    \]
    where $G_1$ has the Gumbel law with CDF $x \mapsto \exp\!\left( -\tfrac{1}{4\sqrt{2\pi}} e^{-x} \right)$.

    \item[(ii)] If $\rho \sqrt{\log p} \to \lambda \in (0,\infty)$,  we have
    \[
    \tfrac{1}{\rho} R_n^* \;\;\stackrel{d}{\to}\; \tfrac{1}{2\lambda}G_1 + N(0,\kappa-1),
    \]
    where $G_1$ and $N(0,\kappa-1)$ are independent.

    \item[(iii)] If $\rho \sqrt{\log p} \to \infty$,  we have
    \[
    \tfrac{1}{\rho} R_n^* \;\;\stackrel{d}{\to}\; N(0,\kappa-1).
    \]
\end{itemize}
\end{proposition}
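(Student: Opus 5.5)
The plan is to condition on the common factors $\xi_1,\dots,\xi_n$. This splits $\hat r_{ij}$ into a pair-independent part, which produces the normal component, and a part that is conditionally a sum of i.i.d.\ vectors with exactly the sparse equicorrelated structure \eqref{sparse equi GF}, which produces the Gumbel component.

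\textbf{Step 1: decomposition.} By \eqref{latent variables},
\[
x_{ki}-\bar x_i=\sqrt{\rho}\,(\xi_k-\bar\xi)+\sqrt{1-\rho}\,(\xi_{ki}-\bar\xi_{\cdot i}).
\]
Hence $\hat r_{ij}=\rho S_n+W_{ij}$, where $S_n:=\sum_{k}(\xi_k-\bar\xi)^2$ and
\[
W_{ij}:=\sqrt{\rho(1-\rho)}\sum_k(\xi_k-\bar\xi)\big[(\xi_{ki}-\bar\xi_{\cdot i})+(\xi_{kj}-\bar\xi_{\cdot j})\big]+(1-\rho)\sum_k(\xi_{ki}-\bar\xi_{\cdot i})(\xi_{kj}-\bar\xi_{\cdot j}).
\]
Since $S_n$ does not depend on $(i,j)$, we get
\[
\frac{R_n}{\sqrt n}-\rho\sqrt n=\rho N_n+\frac{1}{\sqrt n}\max_{i<j}W_{ij},\qquad N_n:=\frac{S_n-n}{\sqrt n}.
\]
By the CLT, $N_n\stackrel{d}{\to} N(0,\kappa-1)$, and $N_n$ is a function of $(\xi_k)$ only.

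\textbf{Step 2: conditional Gaussian approximation.} Condition on $\mathcal F:=\sigma(\xi_1,\dots,\xi_n)$. Up to the centering terms $\bar\xi_{\cdot i}$, which are $O_P(\sqrt{\log p/n})$ uniformly and negligible after a routine bound, the vector $(W_{ij})_{i<j}$ is a sum over $k$ of independent centered vectors. Its conditional covariance is computed as follows.
\begin{itemize}
\item The conditional variance is $\sigma_n^2:=n\big[2\rho(1-\rho)S_n/n+(1-\rho)^2\big]$.
\item The conditional covariance for pairs sharing one index is $n\rho(1-\rho)S_n/n$.
\item Disjoint pairs are uncorrelated.
\end{itemize}
So $W/\sigma_n$ has conditionally the structure \eqref{sparse equi GF} with a random parameter $\hat r_n\to r:=\rho/(1+\rho)$. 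Because $\limsup\rho<1$, we have $1-2r=(1-\rho)/(1+\rho)$ bounded away from $0$, so the hypothesis of Theorem~\ref{infty} (with $n$ replaced by $p$) holds.

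We would then apply the high-dimensional CLT (Corollary~2.1 of \cite{Koike}, or \cite{chernozhukov2023nearly}) conditionally on $\mathcal F$. The summands have sub-exponential tails under the sub-Gaussian assumption on $\xi$, and $\log p=o(n^{1/5})$. This should give
\[
\sup_t\Big|\mb P\big(\max W_{ij}/\sigma_n\le t\,\big|\,\mathcal F\big)-\mb P\big(\max G_{ij}\le t\big)\Big|\to0
\]
in probability, with the lower eigenvalue bound supplied by Lemma~\ref{positive definite}. This uniform conditional approximation on a high-probability event for $(\xi_k)$ is the main obstacle: the Koike/Chernozhukov constants must be controlled through empirical moments of $(\xi_k)$, which are bounded with high probability.

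\textbf{Step 3: centering and scaling.} We have $\sqrt{2}c_p=2\sqrt{\log p}$ and
\[
\frac{\log(4\sqrt\pi c_p)}{\sqrt2 c_p}=\frac{\log\log p}{4\sqrt{\log p}}+\frac{\log(4\sqrt{2\pi})}{2\sqrt{\log p}}.
\]
Theorem~\ref{infty} therefore gives, conditionally on $\mathcal F$,
\[
2\sqrt{\log p}\Big(\frac{\max W_{ij}}{\sigma_n}-2\sqrt{\log p}+\frac{\log\log p}{4\sqrt{\log p}}\Big)\stackrel{d}{\to}G_1.
\]
The shift $\log(4\sqrt{2\pi})$ turns $\exp(-e^{-x})$ into the CDF $\exp\!\big(-\tfrac{1}{4\sqrt{2\pi}}e^{-x}\big)$ of $G_1$.

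Next we replace $\sigma_n/\sqrt n$ by $\sqrt{1-\rho^2}$. Their difference is $O_P(\rho n^{-1/2})$; multiplied by the level $2\sqrt{\log p}$ and the Gumbel scale $2\sqrt{\log p}$, it contributes $O_P(\rho\log p/\sqrt n)=o_P(1)$ because $\log p=o(n^{1/5})$. This yields the representation
\[
R_n^*=\rho N_n+\frac{\sqrt{1-\rho^2}}{2\sqrt{\log p}}\big(G_{1,n}+o_P(1)\big),
\]
where the conditional law of $G_{1,n}$ given $\mathcal F$ converges to that of $G_1$. Consequently $(N_n,G_{1,n})$ converges jointly to independent $N(0,\kappa-1)$ and $G_1$, by conditioning and dominated convergence on characteristic functions.

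\textbf{Step 4: the three regimes.} In all three cases $\rho\to0$, so $\sqrt{1-\rho^2}\to1$.
\begin{itemize}
\item[(i)] If $\rho\sqrt{\log p}\to0$, multiply by $2\sqrt{\log p}$; the normal term vanishes and the limit is $G_1$.
\item[(ii)] If $\rho\sqrt{\log p}\to\lambda\in(0,\infty)$, divide by $\rho$; the limit is $N(0,\kappa-1)+\frac{1}{2\lambda}G_1$ with independent summands.
\item[(iii)] If $\rho\sqrt{\log p}\to\infty$, divide by $\rho$; the Gumbel term is $O_P\big(1/(\rho\sqrt{\log p})\big)\to0$ and the limit is $N(0,\kappa-1)$.
\end{itemize}
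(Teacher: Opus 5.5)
Your route differs from the paper's. The paper never conditions. After removing the sample means \eqref{centering}, it applies Koike's CLT directly to the i.i.d.\ vectors $(x_{ki}x_{kj})_{i<j}$. It then uses the exact Gaussian identity \eqref{dist representation}, which splits the limit field into $\sqrt{c_0}\,G$, with $c_0=(\kappa-1)\rho^2/(1+(\kappa-2)\rho^2)$ the disjoint-pair correlation, plus $\sqrt{1-c_0}$ times the field \eqref{sparse equi GF} with $r=\rho/(1+\rho)$. Independence of the normal and Gumbel parts is then built in.

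Your conditioning on $\xi_1,\dots,\xi_n$ is more transparent: the normal part is literally the CLT for $\sum_k(\xi_k^2-1)$. Your conditional covariance computation ($\hat r_n\to\rho/(1+\rho)$, $\sigma_n^2/n\to 1-\rho^2$) is correct. But all the work moves into the step you flag, and that step is not free.

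\begin{itemize}
\item Given $\mathcal F$, the summands $V_k$ are independent but not identically distributed. The standard Chernozhukov--Chetverikov--Kato--Koike bounds for sub-exponential summands need a constant $B_n$ bounding $\|V_{k,ij}\|_{\psi_1}$ uniformly in $k$.
\item Here $\|V_{k,ij}\|_{\psi_1}$ is of order $1+\sqrt{\rho}\,|\xi_k-\bar\xi|$, and $\max_k|\xi_k-\bar\xi|$ is typically of order $\sqrt{\log n}$ (e.g.\ for Gaussian $\xi$). So $B_n^2\asymp 1+\rho\log n$.
\item This is harmless in regimes (i)--(ii). In regime (iii), however, $\rho$ may stay bounded away from $0$ (e.g.\ $\rho\equiv 1/2$), so your remark that $\rho\to0$ in all three regimes is false there. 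Then $B_n^2(\log p)^5/n$ need not vanish under only $\log p=o(n^{1/5})$.
\item To insert the random parameter $\hat r_n$, you also need Theorem~\ref{infty} to hold uniformly over $r\in[0,1/2-\delta]$. This follows from the theorem by a near-maximizer/subsequence argument or by Slepian's lemma, but it should be said.
\end{itemize}

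A clean repair keeps your decomposition and drops the conditioning. First replace $\xi_k-\bar\xi$ by $\xi_k$, at cost $O_P(\sqrt{\log p/n})$. Then apply the hyperrectangle CLT once, unconditionally, to the i.i.d.\ vectors $\bigl(\xi_k^2-1,\ (\sqrt{\rho(1-\rho)}\,\xi_k(\xi_{ki}+\xi_{kj})+(1-\rho)\xi_{ki}\xi_{kj})_{i<j}\bigr)$.
\begin{itemize}
\item These vectors are sub-exponential with bounded constants.
\item Their covariance is block-diagonal, because $\xi_{ki}$ is centered and independent of $\xi_k$.
\item The second block is $(1-\rho^2)$ times \eqref{sparse equi GF} with $r=\rho/(1+\rho)$. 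By Lemma~\ref{positive definite}, the smallest eigenvalue is $\min\{\kappa-1,(1-\rho)^2\}$; drop the first coordinate if $\kappa=1$.
\item The event $\{N_n\le s\}\cap\{\max_{i<j}W_{ij}\le t\}$ is a hyperrectangle.
\end{itemize}
Joint convergence to independent $N(0,\kappa-1)$ and $G_1$ then follows under the paper's assumption $\log p=o(n^{1/5})$. This is essentially the paper's argument, with the Gaussian splitting done before rather than after the CLT. Your Step~4 conclusion in (iii) survives $\rho\not\to0$, since $\sqrt{1-\rho^2}\le 1$ there.
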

Note that, in contrast to \cite{Jiang19} and \cite{heiny2024maximum}, we do not impose the condition $\limsup \rho_n < 1/2$. Moreover, with $\kappa=3$, we recover the results in \cite{Jiang19}, which corresponds to the Gaussian setting.

\noindent \textbf{Proof of Proposition \ref{limit Rn}.}  
Let us first get rid of the sample mean in \eqref{sample covariance}. We will show that
\begin{align} \label{centering}
\frac{R_n}{\sqrt{n}} = \max_{1\leq i<j \leq p} \frac{\sum_{k=1}^n x_{ki}x_{kj}}{\sqrt{n}} + O_{\mb{P}}\lb \frac{\log p}{\sqrt{n}}  \rb.  
\end{align}
To see this, write 
\[
\frac{1}{\sqrt{n}}\sum_{k=1}^{n} \lb x_{ki} - \bar{x}_i \rb \lb x_{kj} - \bar{x}_j \rb = \frac{1}{\sqrt{n}}\sum_{k=1}^{n} x_{ki}x_{kj} - \sqrt{n} \cdot \bar{x}_i \bar{x}_j.
\]
By using either Lemma 2 in \cite{jiang2023asymptotic} or Bernstein's inequality, we get
\[
\max_{1\leq i\leq p} \sqrt{n} |\bar{x}_i| =  O_{\mb{P}} \lb \sqrt{\log p} \rb.
\]
Thus, \eqref{centering} is justified.
Straightforward calculation using \eqref{latent variables} gives
\begin{align*}
    \mb{E}(x_{1i}x_{1j} x_{1k} x_{1l}) = \begin{cases} 
          (\kappa-1)\rho^2, & \quad \big|\la i,j \ra \cap \la k,l \ra\big| = 0; \\
          \rho+(\kappa-2)\rho^2, & \quad\bigm| \la i,j \ra \cap \la k,l \ra \big| = 1;\\
          1+ (\kappa-2)\rho^2, & \quad\bigm| \la i,j \ra \cap \la k,l \ra \big| = 2.
       \end{cases}
\end{align*}
Next, by using the high-dimensional central limit theorem (Theorem 2.1 in \cite{Koike}), we get $\log p = o(n^{1/5})$ that
\begin{align} \label{hclt-Rn}
    \sup_{t \in \mb{R}} \left| \mb{P} \lb \max_{1\leq i<j \leq p} \frac{\sum_{k=1}^n x_{ki}x_{kj} - n\rho}{\sqrt{n(1+(\kappa-2)\rho^2)}} \leq t \rb - \mb{P} \lb \max_{1\leq i<j\leq p} Z_{ij} \leq t \rb  \right| \to 0\,,
\end{align}
where $\la Z_{ij}; 1\leq i<j \leq p \ra$ is the centered Gaussian field with correlation structure 
\begin{align*}
    \mb{E}(Z_{ij} Z_{kl}) = \begin{cases} 
          \frac{(\kappa-1)\rho^2}{1+(\kappa-2)\rho^2}, &  \big| \la i,j \ra \cap \la k,l \ra \big| = 0; \\
          \frac{\rho+(\kappa-2)\rho^2}{1+(\kappa-2)\rho^2}, & \bigm| \la i,j \ra \cap \la k,l \ra \big| = 1;\\
          1, & \bigm| \la i,j \ra \cap \la k,l \ra \big| = 2.
       \end{cases}
\end{align*}
From this, one can write 
\begin{align} \label{dist representation}
    \max_{1\leq i<j \leq p} Z_{ij} \stackrel{d}{=} \sqrt{\frac{(\kappa-1) \rho^2}{1+(\kappa-2)\rho^2}} \cdot G + \sqrt{\frac{1-\rho^2}{1+(\kappa-2)\rho^2}} \cdot \max_{1 \leq i<j \leq p} G^{*}_{ij}\,,
\end{align}
where $G$ is a standard normal distribution independent from $G_{ij}$'s and the Gaussian field $\la G_{ij}; 1\leq i<j\leq p \ra$ is as in \eqref{sparse equi GF} with correlation parameter
$r = \frac{\rho}{1+\rho}$.
In view of the assumption on $\rho$, it follows that
\[
1-2r = \frac{1-\rho}{1+\rho}
\]
is strictly greater than zero.  Consequently, $(1-2r)\sqrt{\log p}/ \log \log p \to \infty$. The proof is completed by combining \eqref{centering}, \eqref{hclt-Rn}, \eqref{dist representation} and then applying Theorem \ref{infty} together with some simple (but tedious) algebra. $\hfill$ $\square$

Regarding the largest sample correlation cofficients, we can show the following result.
\begin{proposition} \label{limting Mn}
    Suppose $\log p = o(n^{1/5})$, $\limsup \rho_n < 1$ and recall $M_n$ from \eqref{Mn}. Define
    \[
    M^*_n:= \sqrt{n}M_n - \rho \sqrt{n} - (1-\rho) \sqrt{1+ 2\rho + \frac{\kappa-5}{2}\rho^2} \cdot \lb 2\sqrt{\log p} - \frac{\log \log p }{4 \sqrt{\log p}}\rb.
    \]
  Then the following statements hold.
\begin{itemize}
    \item[(i)] If $\rho \sqrt{\log p} \to 0$,  we have
    \[
    2\sqrt{\log p}\,M_n^* \;\;\stackrel{d}{\to}\; G_1\,,
    \]
    where $G_1$ has the Gumbel law with CDF $x \mapsto \exp\!\left( -\tfrac{1}{4\sqrt{2\pi}} e^{-x} \right)$.

    \item[(ii)] If $\rho \sqrt{\log p} \to \lambda \in (0,\infty)$,  we have
    \[
    \tfrac{1}{\rho} M_n^* \;\;\stackrel{d}{\to}\; \tfrac{1}{2\lambda}G_1 + N(0,\kappa-1),
    \]
    where $G_1$ and $N(0,\kappa-1)$ are independent.

    \item[(iii)] If $\rho \sqrt{\log p} \to \infty$,  we have
    \[
    \tfrac{1}{\rho(1-\rho)} M_n^* \;\;\stackrel{d}{\to}\; N(0,\kappa-1).
    \]
\end{itemize}
\end{proposition}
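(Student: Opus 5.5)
We follow the template of the proof of Proposition \ref{limit Rn}, after first linearizing the sample correlation in terms of the centered cross-products. Write $\hat\rho_{ij} = \hat r_{ij}/\sqrt{\hat r_{ii}\hat r_{jj}}$ and set
\[
\hat r_{ii} = n + \sqrt{n}\,S_i + O_{\mb{P}}(\log p), \qquad
S_i := n^{-1/2}\sum_k (x_{ki}^2-1).
\]
A second-order Taylor expansion of $(1+S_i/\sqrt n)^{-1/2}(1+S_j/\sqrt n)^{-1/2}$ then gives, uniformly in $i<j$,
\[
\sqrt{n}\,\hat\rho_{ij} = \frac{1}{\sqrt n}\sum_{k=1}^n \Big( x_{ki}x_{kj} - \tfrac{\rho}{2}\big(x_{ki}^2 + x_{kj}^2 - 2\big)\Big) + o_{\mb{P}}\big((\log p)^{-1/2}\big).
\]
The remainder consists of terms like $\hat r_{ij} S_i^2/n^{3/2}$ and $S_iS_j/\sqrt n$, which are $O_{\mb{P}}((\log p)^{3/2}/\sqrt{n})$ uniformly by sub-Gaussian Bernstein bounds. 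This is $o((\log p)^{-1/2})$ because $\log p = o(n^{1/5})$; here the term $(\log p)^2/\sqrt{n}\to 0$ is needed for the Gumbel scale in part (i). The sample-mean correction is handled exactly as in \eqref{centering}. So $\sqrt n M_n$ equals, up to negligible error, the maximum of $W_{ij} := n^{-1/2}\sum_k w_k(i,j)$ with $w_k(i,j) := x_{ki}x_{kj} - \frac{\rho}{2}(x_{ki}^2+x_{kj}^2-2)$, which has mean $\rho$.

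Next we compute the covariance of $w(i,j)$ using \eqref{latent variables}, separately for pairs sharing $0$, $1$ or $2$ indices. We expect the three covariances to take the form
\[
\sigma_0 = (\kappa-1)\rho^2(1-\rho)^2, \qquad \sigma_1 = \sigma_0 + c_1, \qquad \sigma_2 = \sigma_0 + c_2,
\]
where $c_2 = (1-\rho)^2\big(1+2\rho+\frac{\kappa-5}{2}\rho^2\big)$ minus the common part, and $c_1 = c_2\cdot\frac{r}{1}$ with
\[
r=\frac{\rho-\rho^2/2}{1+2\rho-\rho^2/2}
\]
in the Gaussian case, or its $\kappa$-analogue in general. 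This is a tedious but finite moment calculation. The key structural point is that the covariance matrix again decomposes as ``common factor $+$ sparse equicorrelated field'', because the global component $\sqrt\rho\,\xi_k$ contributes through $\xi_k^2-1$ to every coordinate equally. Hence, after applying the high-dimensional CLT (Theorem 2.1 of \cite{Koike}) exactly as in \eqref{hclt-Rn}, we obtain the analogue of \eqref{dist representation}:
\[
\max_{i<j} W_{ij} \stackrel{d}{\approx} \rho + \sqrt{\sigma_0}\,G + \sqrt{c_2}\,\max_{i<j} G^*_{ij},
\]
with $G$ standard normal and independent of the field $\{G^*_{ij}\}$ of type \eqref{sparse equi GF} with parameter $r = c_1/c_2$. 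The Koike bound applies since $\log p=o(n^{1/5})$ and the summands are sub-exponential. Non-degeneracy requires the smallest eigenvalue of the sparse part to be bounded away from zero, which follows from Lemma \ref{positive definite} once $r<1/2$ uniformly.

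The main obstacle is verifying $\limsup r<1/2$ uniformly for all $\rho\le 1-\delta$ and general $\kappa$, so that Theorem \ref{infty} applies. We will try to show that $c_2-2c_1$ equals $(1-\rho)^2$ times a quantity bounded below (in the Gaussian case $1-2r=(1+\rho^2/2)/(1+2\rho-\rho^2/2)$ is bounded away from zero). Given this, Theorem \ref{infty} gives a Gumbel limit for $\max G^*_{ij}$ at centering $2\sqrt{\log p}-\log\log p/(4\sqrt{\log p})$ and scale $2\sqrt{\log p}$, with the constant $1/(4\sqrt{2\pi})$ obtained from $c_p$ by the same algebra as in Proposition \ref{limit Rn}. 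The three regimes then follow by comparing $\sqrt{\sigma_0}\asymp\rho\sqrt{\kappa-1}(1-\rho)$ with the Gumbel fluctuation scale $(1-\rho)/\sqrt{\log p}$:
\begin{itemize}
\item[(i)] when $\rho\sqrt{\log p}\to 0$, the normal term vanishes after multiplication by $2\sqrt{\log p}$;
\item[(ii)] when $\rho\sqrt{\log p}\to\lambda$, we obtain the independent sum $\frac{1}{2\lambda}G_1+N(0,\kappa-1)$;
\item[(iii)] when $\rho\sqrt{\log p}\to\infty$, only the normal part survives, giving the $\rho(1-\rho)$ normalization.
\end{itemize}
Finally, we use Slutsky's lemma and the facts $c_2\to 1$ and $\sqrt{c_2}/(1-\rho)\to 1$ in regimes (i) and (ii).
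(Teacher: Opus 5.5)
Your route is the paper's: you linearize $\sqrt n\hat\rho_{ij}$ into $n^{-1/2}\sum_k T^{(k)}_{ij}$ as in \eqref{linearize}, apply Koike's high-dimensional CLT, split the Gaussian surrogate into a common normal factor plus a sparse equicorrelated field as in \eqref{dist representation2}, and finish with Theorem \ref{infty} and Slutsky in the three regimes.

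The one step you leave open, the covariance computation you call the main obstacle, is exactly what the paper's proof supplies, and it closes the argument immediately. From \eqref{latent variables},
$\sigma_0=(1-\rho)^2(\kappa-1)\rho^2$, $c_1=(1-\rho)^2\bigl(\rho+\frac{\kappa-5}{4}\rho^2\bigr)$ and $c_2=(1-\rho)^2\bigl(1+2\rho+\frac{\kappa-5}{2}\rho^2\bigr)$.
Hence $c_2-2c_1=(1-\rho)^2$ exactly, and $1-2r=1/\bigl(1+2\rho+\frac{\kappa-5}{2}\rho^2\bigr)$ is bounded away from zero. Also $r\ge 0$, because $\kappa\ge 1$ gives $c_1\ge \rho(1-\rho)^3$.

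A few small corrections:
\begin{itemize}
\item The Gaussian-case $r$ you quote is inconsistent with your own $c_2$. At $\kappa=3$ it is $(\rho-\rho^2/2)/(1+2\rho-\rho^2)$, so $1-2r=1/(1+2\rho-\rho^2)$.
\item The centering in your representation of $\max_{i<j}W_{ij}$ should be $\rho\sqrt n$ rather than $\rho$.
\item Koike's bound needs only the unit-variance normalization, not an eigenvalue lower bound, although invoking Lemma \ref{positive definite} does no harm.
\end{itemize}
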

The condition $\limsup \rho_n <1$ imposed in Proposition \ref{limting Mn} is the same as in \cite{Jiang19}, which is to prevent the sample correlation coefficients from collapsing to $1$ too quickly. It is easy to check that for $\kappa=3$, we recover the results in \cite{Jiang19}. 
\smallskip

\noindent \textbf{Proof of Proposition \ref{limting Mn}.}   By a minor modification of Step 1 in the proof of Theorem 1 in \cite{jiang2023asymptotic}, we have
\begin{align} \label{linearize}
    \max_{1\leq i<j \leq p} \sqrt{n}\hat{\rho}_{ij} -\rho\sqrt{n} = \max_{1\leq i<j\leq p} \la \frac{1}{\sqrt{n}}\sum_{k=1}^n x_{ki}x_{kj} - \frac{\rho}{2}\lb x_{ki}^2 + x_{kj}^2 \rb \ra + O_{\mb{P}}\lb \frac{\log p}{\sqrt{n}}  \rb.
\end{align}
For $k=1,\ldots,n$, put
\[
T^{(k)}_{ij}:= x_{ki}x_{kj} - \frac{\rho}{2} \lb x_{ki}^2+x_{kj}^2 \rb.
\]
Some moment calculations using \eqref{latent variables} give
\begin{align*}
\mb{E} \lb T_{ij}^{(1)} T_{kl}^{(1)} \rb = \begin{cases}
    (1-\rho)^2 \lb 1 + 2\rho + \frac{\rho^2}{2}(3\kappa-7) \rb,& \bigm| \la i,j \ra \cap \la k,l \ra \big| = 2; \\
    (1-\rho)^2 \bigg[ \rho + \lb \frac{5\kappa-9}{4} \rb \rho^2 \bigg],& \bigm| \la i,j \ra \cap \la k,l \ra \big| = 1; \\
    (1 - \rho)^2 (\kappa-1) \rho^2,& \bigm| \la i,j \ra \cap \la k,l \ra \big| = 0.
\end{cases}
\end{align*}
By the high-dimensional central limit theorem (Theorem 2.1 in \cite{Koike}), we have for $\log p =o(n^{1/5})$
\begin{align} \label{hclt Tn}
     \sup_{t \in \mb{R}} \left| \mb{P} \lb \max_{1\leq i<j \leq p} \frac{\sum_{k=1}^n T^{(k)}_{ij}}{(1-\rho)\sqrt{n(1 + 2\rho + \frac{\rho^2}{2}(3\kappa-7)}} \leq t \rb - \mb{P} \lb \max_{1\leq i<j\leq p} Z_{ij} \leq t \rb  \right| \to 0\,,
\end{align}
where $\la Z_{ij}; 1\leq i<j \leq p \ra$ is the centered Gaussian field with correlation structure 
\begin{align*}
    \mb{E}(Z_{ij} Z_{kl}) = 
           \begin{cases} 
         \frac{(\kappa-1)\rho^2}{1 + 2\rho + \frac{\rho^2}{2}(3\kappa-7)}, &  \big|\la i,j \ra \cap \la k,l \ra \big|= 0; \\
          \frac{\rho + \lb \frac{5\kappa-9}{4} \rb \rho^2}{1 + 2\rho + \frac{\rho^2}{2}(3\kappa-7)}, & \bigm| \la i,j \ra \cap \la k,l \ra \big| = 1;\\
          1, & \bigm| \la i,j \ra \cap \la k,l \ra \big| = 2.
       \end{cases}
\end{align*}
Thus, we further have the decomposition 
\begin{align} \label{dist representation2}
    \max_{1\leq i<j \leq p} Z_{ij} \stackrel{d}{=} \sqrt{\rho_1}G + \sqrt{1-\rho_1} \cdot \max_{1\leq i<j \leq p} G_{ij} \,,
\end{align}
where $\rho_1:=  \rho^2(\kappa-1)(1+ 2\rho + (\rho^2/2)(3\kappa-7))^{-1}$, $G$ is a standard normal distribution independent from $G_{ij}$'s and the Gaussian field $\la G_{ij}; 1\leq i<j\leq p \ra$ is as in \eqref{sparse equi GF} with correlation parameter
\[
r = \frac{\rho + \frac{\kappa-5}{4} \rho^2}{1+ 2\rho + \frac{\kappa-5}{2} \rho^2}.
\]
Using the assumption on $\rho$ and the fact that $\kappa \ge 1$, we deduce that
\[
1-2r = \frac{1}{1+2\rho + \frac{\kappa-5}{2} \rho^2}
\]
is strictly greater than zero. Consequently, $(1-2r)\sqrt{\log p}/ \log \log p \to \infty$.
The proof is completed by combining \eqref{linearize}, \eqref{hclt Tn}, \eqref{dist representation2} and applying Theorem \ref{infty} together with some simple (but tedious) algebra. $\hfill$ $\square$

\subsection{The $n$-dependent marginals case}
Recall $R_n$ in \eqref{Mn}. We now show that if $\xi = \xi_n$ is allowed to depend on $n$, the limiting distribution of $R_n$ can change drastically, depending on the magnitude of $\kappa_n := \mathbb{E}[\xi_n^4]$. { We will investigate the behavior of $R_n$ under two scenarios: $\xi_n$ behaves like a heavy-tailed distribution with masses shifted toward infinity, and when $\xi_n$ behaves like a Rademacher distribution with masses shifted toward two points. In both scenarios, the behaviors of $R_n$ are quite different than in Section \ref{sec-fixed marginal pearson}.}

Assume \eqref{C1}--\eqref{C3}. We will consider the following two regimes for $\kappa_n$: 

\begin{itemize}
    \item[(i)] $\kappa_n/(\log p) \to \lambda_1 \in(0,\infty) $ and $\rho_n\log p \to \lambda_2  { \in(0,\infty)}$ as $n\to \infty$.
    \item[(ii)] $(\kappa_n-1)(\log p)^2 \to \lambda_1 \in (0,\infty)$ and $(1-\rho_n)\log p \to \lambda_2 \in (0,\infty)$. 
\end{itemize}
An example of a distribution that satisfies conditions \eqref{C1}--\eqref{C3} and (i) is given in \eqref{example}  with 
$e_n= \Omega(\sqrt{\log p})$. 

An example of a distribution that satisfies conditions \eqref{C1}--\eqref{C3} and (ii) is the distribution
\begin{align*}
    \xi_n = \begin{cases}
        \sqrt{\frac{(\log p)^2 + \lambda_1}{ (\log p)^2 }}, \ &\text{with probability} \ \frac{(\log p)^2}{2(\lambda_1 + (\log p)^2)}; \\
        0, \ &\text{with probability} \ \frac{\lambda}{\lambda + (\log p)^2}; \\
        -\sqrt{\frac{(\log p)^2 + \lambda_1}{ (\log p)^2 }}, \ &\text{with probability} \ \frac{(\log p)^2}{2(\lambda_1 + (\log p)^2)}.
    \end{cases}
\end{align*}
Indeed, one can check that 
\[
\mb{E} \xi_n =0,\quad \ \mb{E} \xi_n^2=1,\quad \ \mb{E} \xi_n^4= 1+\frac{\lambda_1}{(\log p)^2},\quad \ \mb{E} \xi_n^{14} =  \lb 1+\frac{\lambda_1}{(\log p)^2} \rb^6\,.
\]
{More generally, condition (ii) implies that $\xi_n$ converges in distribution to a Rademacher variable. To see this, note that (ii) requires $\kappa_n\to 1$ which in turn implies that $\P(|\xi_n|\le 1)\to 1$. Since $\xi_n$ is mean zero and unit variance, one concludes that $\xi_n$ converges to Rademacher distribution.}

The following result describes the limiting distribution of $R_n$ in the $n$-dependent case.
\begin{proposition} \label{new distributions Rn}
Assume \eqref{C1}--\eqref{C3}.
\begin{itemize}
    \item[(i)] In regime (i),
    \begin{align} \label{regime i}
    2\sqrt{\log p}\left[ \frac{R_n}{\sqrt{n}} - \rho\sqrt{n} 
    - \sqrt{1-\rho^2}\left( 2\sqrt{\log p} - \frac{\log \log p}{4\sqrt{\log p}} \right)\right] \cid
    N(0,2\lambda_1\lambda_2^2) + G_1,
    \end{align}
    where $G_1$ is the Gumbel law in Proposition~\ref{limit Rn}, independent of the normal term.

    \item[(ii)] In regime (ii),
    \begin{align} \label{regime ii}
    \log p\!\left[ \frac{R_n}{\sqrt{n}} - \rho\sqrt{n} 
    - \sqrt{1-\rho^2}\!\left( 2\sqrt{\log p} - \frac{\log \log p + \log(4\pi)}{2\sqrt{\log p}} \right)\right]
    \end{align}
    converges in distribution to 
    \[
    N(0,\lambda_1) + \sqrt{\lambda_2}\,
    \sup_{i<j}\la \frac{\eta_i+\eta_j}{\sqrt{2}} 
    + \sqrt{\lambda_2}\,Z_{ij} - \frac{\lambda_2}{2} \ra,
    \]
    where $\eta_i$ are as in \eqref{eta}, $Z_{ij}\sim N(0,1)$ are i.i.d., and the normal component is independent of the rest.
\end{itemize}
\end{proposition}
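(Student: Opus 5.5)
The plan is to follow the proof of Proposition~\ref{limit Rn} step by step, with the fixed-marginal estimates replaced by the $n$-dependent ones from the proof of Proposition~\ref{maximal interpoint phase transition}. In regime (i) we apply Theorem~\ref{infty}, and in regime (ii) we apply Theorem~\ref{critical}.

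\emph{Step 1: reductions.} First, \eqref{centering} still holds: $R_n/\sqrt n$ equals $\max_{i<j} n^{-1/2}\sum_k x_{ki}x_{kj}$ up to $O_{\mb P}(\log p/\sqrt n)$. Bernstein's inequality can be replaced by a moment bound using \eqref{C2}--\eqref{C3}. Under \eqref{C1}, this error is $o(1)$ even after multiplying by $2\sqrt{\log p}$ or by $\log p$. Next, apply Theorem~2.1 of \cite{Koike} to the vectors $\{x_{ki}x_{kj}-\rho\}_{i<j}$, normalized by $\sqrt{1+(\kappa_n-2)\rho^2}$. The quantities $B_{n1}$ and $B_{n2}$ are bounded exactly as in \eqref{hclt-Dn}. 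By \eqref{C3} and $\kappa_n=O(\log p)$, the moments of order up to $12+\ve$ grow at most polynomially in $\log p$, while \eqref{C1} supplies a factor $n^{-c}$. Hence the Kolmogorov distance to $\max_{i<j}Z_{ij}$ tends to $0$, where $Z$ is the Gaussian field with the correlations computed after \eqref{hclt-Rn} (with $\kappa=\kappa_n$). Then the representation \eqref{dist representation} gives
\[
\sqrt{1+(\kappa_n-2)\rho^2}\,\max_{i<j} Z_{ij}\stackrel{d}{=}\sqrt{(\kappa_n-1)\rho^2}\,G+\sqrt{1-\rho^2}\,\max_{i<j}G_{ij},
\]
where $G\sim N(0,1)$ is independent of the field $\mc G_p$ in \eqref{sparse equi GF} with parameter $r=\rho/(1+\rho)$. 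The normalization $\sqrt{1+(\kappa_n-2)\rho^2}$ cancels exactly, which is why the centering in \eqref{regime i}--\eqref{regime ii} only involves $\sqrt{1-\rho^2}$.

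\emph{Step 2: regime (i).} Here $\rho\to0$, so $1-2r=(1-\rho)/(1+\rho)\to1$ and Theorem~\ref{infty} applies to $\max G_{ij}$. Multiplying by $2\sqrt{\log p}$, the term $\sqrt{1-\rho^2}\max G_{ij}$ minus its centering converges to $G_1$. The Gumbel constant $\tfrac{1}{4\sqrt{2\pi}}$ comes from rewriting the normalization of Theorem~\ref{infty} with $c_p=\sqrt{2\log p}$, exactly as in Proposition~\ref{limit Rn}(i). The Gaussian term is $2\sqrt{\log p}\sqrt{(\kappa_n-1)\rho^2}\,G$, and $(\kappa_n-1)\rho^2\log p\to\lambda_1\lambda_2^2$. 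It is therefore asymptotically normal and independent of $G_1$; its variance is read off from this limit, and I would carefully recheck the constant against the stated $2\lambda_1\lambda_2^2$.

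\emph{Step 3: regime (ii).} Here $(1-2r)\log p=\frac{1-\rho}{1+\rho}\log p\to\lambda_2/2$, so Theorem~\ref{critical} with $\lambda=\lambda_2/2$ gives
\[
c_p\bigl(\max G_{ij}-\sqrt2 d_p\bigr)\cid \sup_{i<j}\Bigl(\tfrac{\eta_i+\eta_j}{\sqrt2}+\sqrt{\lambda_2}Z_{ij}\Bigr)-\tfrac{\lambda_2}{2}.
\]
Note that $\sqrt2 d_p=2\sqrt{\log p}-(\log\log p+\log 4\pi)/(2\sqrt{\log p})$, which is exactly the centering in \eqref{regime ii}. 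For the scaling, $\log p\cdot\sqrt{1-\rho^2}/c_p\to\sqrt{\lambda_2}$, since $1-\rho^2\sim2\lambda_2/\log p$. The common term satisfies $\log p\sqrt{(\kappa_n-1)\rho^2}\to\sqrt{\lambda_1}$, which gives $N(0,\lambda_1)$ independent of the rest. In the sample-mean step, the shared factor $\sqrt\rho\,\xi_k$ contributes only $\sqrt n\,\rho\,\bar\xi^2=O_{\mb P}(n^{-1/2})$, so it is harmless.

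\emph{Main obstacle.} I expect the main obstacle to be the Gaussian approximation in regime (ii). There the field is nearly degenerate: all pairwise correlations are close to the common value. Moreover, the Kolmogorov error must be $o(1)$ on the finer scale $1/\log p$ rather than $1/\sqrt{\log p}$. My approach is to use that Koike's bound only needs unit diagonal variances, which hold after normalization, and that the sup-norm over $t$ is scale-free, so the extra factor $\log p$ costs nothing. If the near-degeneracy causes trouble, I would subtract the common component $\sqrt\rho\,\xi_k\cdot\frac1{\sqrt n}\sum_k$ explicitly before applying the HDCLT. Its contribution is then a one-dimensional CLT, and the remainder has correlation structure \eqref{sparse equi GF} directly.
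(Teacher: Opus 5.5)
Your plan is the paper's own argument written out in detail. The paper only says that \eqref{hclt-Rn} still holds under \eqref{C1}--\eqref{C3} and then applies the main theorems to \eqref{dist representation}. Your details are sound. Normalizing to unit variances makes the near-degenerate correlation in regime (ii) harmless: $1+(\kappa_n-2)\rho^2\asymp 1/\log p$ there, so the normalized moments grow only by powers of $\log p$. The Kolmogorov distance is also unaffected by the rescaling by $\log p$. You are also right that regime (i) needs Theorem~\ref{infty}, not Theorem~\ref{critical} as the paper's one-line proof suggests, because there $1-2r\to 1$.

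You should not leave the constants open, because they do not match the statement.

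In (i), your own ingredients give the variance $4\log p\,(\kappa_n-1)\rho^2=4(\rho\log p)^2(\kappa_n-1)/\log p\to 4\lambda_1\lambda_2^2$. This agrees with Proposition~\ref{limit Rn}(ii): multiplying it by $2\lambda$ gives $2\sqrt{\log p}\,R_n^*\to G_1+N(0,4\lambda^2(\kappa-1))$. The stated $2\lambda_1\lambda_2^2$ would correspond to scaling by $c_p=\sqrt{2\log p}$. It is a misprint, and no argument can deliver it.

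In (ii), you inherit the drift of Theorem~\ref{critical}, and that drift is off. With $Z_i=c_p(X_i-d_p)$ and \eqref{Gaussian representation}, one has exactly $c_p(G_{ij}-\sqrt{2}\,d_p)=\sqrt{r}(Z_i+Z_j)+c_p\sqrt{1-2r}\,Y_{ij}+c_pd_p(2\sqrt{r}-\sqrt{2})$. Since $c_pd_p\sim 2\log p$, the last term tends to $-\sqrt{2}\lambda$, where $\lambda=\lim(1-2r)\log p$, not to $-\lambda$. The proof of Theorem~\ref{critical} loses a $\sqrt{2}$ in $c_n\sqrt{2}\,d_n\sim 2\sqrt{2}\log n$. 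Here $\lambda=\lambda_2/2$, so the drift inside the supremum in (ii) should be $\lambda_2/\sqrt{2}$ rather than $\lambda_2/2$. The rest of your (ii) computation is correct: the scaling $\log p\sqrt{1-\rho^2}/c_p\to\sqrt{\lambda_2}$, the coefficient $\sqrt{\lambda_2}$ of $Z_{ij}$, and the independent $N(0,\lambda_1)$ term.
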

\noindent \textbf{Proof of Proposition \ref{new distributions Rn}.} One can check that \eqref{hclt-Rn} still goes through under conditions \eqref{C1}--\eqref{C3}. 
The desired result then follows from applying Theorem \ref{critical} to \eqref{dist representation}. We omit details for brevity. $\hfill$ $\square$
\medskip

Let us compare regimes (i) and (ii) with the results of \cite{Jiang19}.  
In regime (i), \cite{Jiang19} showed that the statistic in \eqref{regime i} converges to a standard Gumbel distribution when $\rho \sqrt{\log p} \to 0$.  
In our setting, although $\rho \sqrt{\log p} = O\!\left((\log p)^{-1/2}\right) \to 0$, the diverging fourth moment contributes an additional Gaussian component to the limiting distribution.  

In regime (ii), the behavior is more intricate. The data points $\bm{x}_i$ have strongly dependent coordinates with marginals converging to {the Rademacher distribution}.  
Heuristically, the Gaussian contribution in \eqref{dist representation} persists, while the Gumbel component breaks down in the second term of \eqref{dist representation}. Also, the scaling of the statistic in \eqref{regime ii} is of order $\Omega\big( (\log p)^{-1} \big)$, which is nonstandard. This scaling can be regarded as the asymptotic variance. In contrast, it is known (see, e.g., \cite{ding2015multiple}) that for a collection $X_1,\dots,X_N$ of normal distributions with unit variance and arbitrary covariance,
\[
\Var\Big( \max_{1 \leq i \leq N} X_i \Big) \;\geq\; 
\frac{C}{\big( 1 + \mathbb{E}\max_{1 \leq i \leq N} X_i \big)^2},
\]
for some universal constant $C$.  

Thus, in all existing results, the  asymptotic variance of the largest element is of order $(\log p)^{-1}$ since the maximum of $O(p^2)$ standard normal distributions is of size $\Theta(\sqrt{\log p})$. By contrast, in \eqref{regime ii} the asymptotic variance is only of order $\Theta\big( (\log p)^{-2} \big)$, which is caused by the degeneracy effect of $\kappa_n$.

\section{Application III: FWER control in mutiple testing} \label{sec-FWER}
As a direct consequence of Theorem \ref{infty} and  Slepian's lemma, we can see that the test in \eqref{max test} is asymptotically the same as that of the maximum of i.i.d. standard normal variables, provided that \eqref{graphical covariance} is true for some $\delta>0$. Thus, we can pick the threshold $u_n$ as
\[
u_n = \frac{q_{\alpha}}{2\sqrt{\log n}} +2\sqrt{\log n} - \frac{\log \log n  + \log(4\pi)}{4\sqrt{\log p}}
\]
where $q_\alpha$ is the $(1-\alpha)$-quantile of a Gumbel law with CDF $x \mapsto \exp\!\left( -\tfrac{1}{4\sqrt{2\pi}} e^{-x} \right)$.

Moreover, Theorem \ref{infty} also reveals that if $\delta=\delta_n$ is allowed to be $n$-dependent, then the same threshold for $u_n$ would remain valid as long as $\delta_n  \gg \log \log n/\sqrt{\log n}$ as $n\to \infty$. Theorems \ref{critical} and \ref{0} imply that this choice of $u_n$ breaks down when $\delta_n$ decays at least at fast as $\Omega\lb 1/ \log n \rb$. 

We now provide two examples of some popular situations where this type of mutiple testing problem shows up in statistical analysis. The first example is the classical Analysis of Variance (ANOVA), and the second one is a network model known as social relation model (see \cite{hoff2021additive} for more details). 
\smallskip

\textit{Example 1: Two-way ANOVA with random effects.} Analysis of Variance is probably the oldest statistical technique for mean comparison. We shall consider the two-way ANOVA with random effects, which is arguably the most common model used in experimental design. In this context, one considers a collection of observations of random variables $Y_{ij}$'s given by
\[
Y_{ijk}:= \mu_{ij} + \alpha_i + \beta_j + \lb \alpha\beta \rb_{ij} + \epsilon_{ijk}\,, \qquad k=1,\ldots,m.
\]
where $\mu_{ij}$'s are the (deterministic) means. The random variables $\la  \alpha_i; \beta_j; (\alpha\beta)_{ijk}; \epsilon_{ij} \ra$ are i.i.d. normal distributions with possibly different variances. One can think of the pairs $(i,j)$ as treatments and $k$ as the number of replicates of this treatment. In practice, one often takes $\hat{Y}_{ij}$, the sample mean over $k$, to get an estimated treatment effect for every $(i,j)$. It is easy to check that the covariance structure of $\la  \hat{Y}_{ij}; 1\leq i<j\leq n  \ra$
has the same form as in \eqref{sparse equi GF} under normality assumption.

\textit{Example 2: Social relation models.} This is the model taken from \cite{hoff2021additive} (see also the references therein for more details). It is similar to the two-way ANOVA model above, except that $\la i,j \ra$ might have network structures, and can be either directed or undirected. 

\section{Proof of Theorem \ref{infty}} \label{sec-proof 1}
In what follows, we will repeatedly make use of the representation
\begin{align} \label{Gaussian representation}
G_{ij} := \sqrt{r} \lb X_i + X_j \rb + \sqrt{1-2r} \cdot Y_{ij}
\end{align}
for independent sequences $(X_{i})_{1 \leq i \leq n}$ and $(Y_{ij})_{1 \leq i<j \leq n}$ of i.i.d. standard normal random variables.  {It is crucial to note that, using this construction, the field $\{G_{ij}\}_{1 \leq i<j \leq n}$ has the covariance structure \eqref{sparse equi GF}.}

We start by defining $T_n := \log \log n$ and $f_n:= 1-2r$, 
and we introduce the truncation level 
\[
t_n:= \sqrt{2 \log n} + \frac{T_n}{\sqrt{\log n}}.
\]
Writing
$E_n:= \la \max_{1 \leq i \leq n} |X_i| \leq t_n  \ra$,
it is easy to check that 
\begin{align*}
    \mb{P} \lb E_n^c  \rb &\leq 2 n \cdot \lb 1 - \Phi \lb t_n \rb \rb \\
    &\leq 2 n \cdot \frac{e^{-t_n^2/2}}{\sqrt{2\pi} t_n} = O \lb \frac{n \cdot e^{-t_n^2/2}}{\sqrt{\log n}} \rb \\
    &= O \lb \frac{1}{\sqrt{\log n}} \rb \to 0.
\end{align*}
Fix $y \in \mb{R}$ and define 
$$u_n=u_n(y)= \sqrt{2}c_n + \frac{y - \log \lb 4\sqrt{\pi}c_n \rb}{\sqrt{2}c_n}$$
with $c_n$ as in \eqref{dn}. Setting 
\begin{align*}
       A_{ij} = \la G_{ij} \leq u_n  \ra \cup \la |X_i| \vee |X_j| \geq t_n \ra,
\end{align*}
it is easy to see that
\[
\mb{P} \lb \cap_{1\le i<j\le n} A_{ij} \rb \le \mb{P} \lb \max_{1\le i<j\le n} G_{ij} \leq u_n \rb + \mb{P} \lb E_n^c \rb = \mb{P} \lb \max_{1\le i<j\le n} G_{ij} \leq u_n \rb + o(1).
\]
Now, by using Poisson approximation (see, e.g., Theorem 1 in \cite{arratia:goldstein:gordon:1989}), we have
\begin{align*}
    \left| \mb{P} \lb \cap_{1\le i<j\le n} A_{ij} \rb - e^{-\sum_{1\le i<j\le n}p_{ij}} \right| \leq b_1 + b_2\,,
\end{align*}
where 
\begin{align*}
    p_{ij}&:= \mb{P} \lb A_{ij}^c \rb =  \P\Big(\la G_{ij} > u_n \ra \cap \la |X_i| \vee |X_j| < t_n \ra \Big), \\
    b_1&:= O(n^3) \cdot p_{12}^2, \\
    b_2&:= O(n^3) \cdot \mb{P} \lb A_{12}^c \cap A_{13}^c \rb.
\end{align*}
Therefore, we have
\[
 \left|  \mb{P} \lb \max_{1\leq i<j \leq n} G_{ij} \leq u_n \rb- e^{-\sum_{1\le i<j\le n}p_{ij}} \right| \leq b_1 + b_2 + o(1).
\]
To finish the proof, we need to show the following three statements
\begin{align}
    \sum_{1\le i<j\le n} p_{ij} = \frac{n(n-1)}{2} p_{12} &\to e^{-y}\,, \label{mean} \\
    b_1 &\to 0\,, \label{b1} \\
    b_2 &\to 0. \label{b2}
\end{align}

\noindent \textit{\underline{Proof of \eqref{mean}}.} {Using Mill's ratio, we get $\frac{n(n-1)}{2} \,\mb{P} ( G_{12} \geq u_n) \to e^{-y}$; see for example \cite{heiny:mikosch:yslas:2021}. To prove \eqref{mean}, it therefore} suffices to show that 
\[
n^2 \left[ \mb{P} \lb G_{12} \geq u_n \rb - \mb{P} \lb G_{12} \geq u_n, |X_1| \vee |X_2| \leq t_n \rb \right] \to 0.
\]
By noting that
\[
\mb{P} \lb G_{12} \geq u_n \rb - \mb{P} \lb G_{12} \geq u_n, |X_1| \vee |X_2| \leq t_n \rb \leq 2\mb{P} \lb G_{12} \geq u_n, |X_1| \geq t_n \rb,
\]
we only need to prove
\begin{align} \label{M12, |X_1|>=tn}
n^2 \cdot \mb{P} \lb G_{12} \geq u_n, |X_1| \geq t_n \rb \to 0. 
\end{align}
To see this, write
\begin{align*}
 \mb{P} \lb G_{12} \geq u_n, |X_1| \geq t_n \rb  &= \mb{P} \lb G_{12} \geq u_n, X_1 \geq t_n \rb +  \mb{P} \lb G_{12} \geq u_n, X_1 \leq -t_n \rb\,.
\end{align*}
For the second term,  we observe that
\begin{align*}
    \mb{P} \lb G_{12} \geq u_n, X_1 \leq -t_n \rb &= \int_{-\infty}^{-t_n} \left[ 1 - \Phi \lb \frac{u_n-\sqrt{r}x}{\sqrt{1-r}} \rb \right] \phi(x) dx = o(n^{-2}).
\end{align*}
To bound the first term, recall from Example 1 in \cite{Hashorva2003-ms} that 
\begin{align*}
     \mb{P} \lb G_{12} \geq u_n, X_1 \geq t_n \rb & \leq O \lb \frac{1}{\lb u_n -\sqrt{r}t_n \rb \lb t_n - \sqrt{r}u_n \rb} \rb \cdot \exp \lb -\frac{u_n^2-2\sqrt{r} u_nt_n +t_n^2}{2(1-r)} \rb \,.
\end{align*}
Observe that 
\begin{align*}
     \lb u_n - \sqrt{r}t_n \rb \lb t_n - \sqrt{r}u_n \rb 
    & \geq  \Omega\lb \sqrt{\log n} \rb \left[ (1 -\sqrt{2r})\sqrt{\log n} + \frac{T_n}{2\sqrt{\log n}} + o \lb  \frac{1}{\sqrt{\log n}} \rb \right] \\
    & \geq  \Omega \lb (1-\sqrt{2r})\log n \rb = \Omega(f_n)
\end{align*}
where  we have used the fact that $T_n=o\lb \log n \rb$ to get the lower bound $u_n-\sqrt{r}t_n \gtrsim \sqrt{\log n}$. Also, some simple algebra yields
\begin{align*}
    u_n^2&= 4\log n - \log \log n + O \lb \frac{(\log \log n)^2}{\log n} \rb\,, \\
    2\sqrt{r}u_nt_n &= 2\sqrt{r} \cdot 2\sqrt{\log n} \lb 1 - \frac{\log \log n}{8\log n} - O \lb \frac{1}{\log n} \rb \rb  \sqrt{2\log n} \lb 1 + \frac{T_n}{\sqrt{2}\log n} \rb \\
    &=  4\sqrt{2r} \log n \lb 1 + \frac{T_n}{\sqrt{2}\log n} - \frac{\log \log n}{8 \log n} - O \lb \frac{T_n \log \log n}{(\log n)^2} \rb - O \lb \frac{1}{\log n} \rb \rb \,, \\
    t_n^2 &= 2\log n +2\sqrt{2}T_n + \frac{T_n^2}{\log n}\,.
\end{align*}
Thus, we have
\begin{align*}
    -\frac{u_n^2-2\sqrt{r} u_nt_n +t_n^2}{2(1-r)} &\leq -\log n \left[ \frac{6-4\sqrt{2r}}{2(1-r)} \right] +\frac{\log \log n}{2(1-r)} \cdot \lb 1 - \sqrt{\frac{r}{2}} \rb \\
    & \quad- T_n \cdot \frac{2\sqrt{2}-4\sqrt{r}}{2(1-r)} + O(1)
\end{align*}
where we dropped the term involving $T_n^2$ in the above display.  

As a consequence, we obtain
\begin{align*}
    -\frac{u_n^2-2\sqrt{r} u_nt_n +t_n^2}{2(1-r)} &\leq  -\log n \left[ 2 + \frac{6-4\sqrt{2r}-4+2r}{2(1-r)}\right] +O(1) \\
    &= -\log n \left[ 2 +\frac{(1-\sqrt{2r})^2}{2(1-r)} \right] + O(1)
\end{align*}
from which we conclude that
\begin{align*}
    \mb{P} \lb G_{12} \geq u_n, X_1 \geq t_n \rb & \leq O \lb \frac{1}{(1-2r)\log n} \rb  \frac{\exp \left[ -(1+o(1))f_n^2 \log n + O(\log \log n) \right]}{n^2} = o(n^{-2}).
\end{align*}
since $f_n^2\log n \gg \log \log n$. This completes the proof of \eqref{mean}.
\smallskip

\noindent \textit{\underline{Proof of \eqref{b1}}.} This is a direct consequence of \eqref{mean} since the proof of \eqref{M12, |X_1|>=tn} implies that $p_{12}=O(n^{-2})$. 
\smallskip

\noindent \textit{\underline{Proof of \eqref{b2}}.}
Observe that 
\begin{align*}
    \mb{P} \lb A_{12}^c \cap A_{13}^c \rb  &= \mb{P} \lb G_{12} \geq u_n, G_{13} \geq u_n, |X_1| \leq t_n, |X_2|\leq t_n, |X_3| \leq t_n \rb \\
    &\asymp \iiint_{[-t_n,t_n]^3}  \left[ 1- \Phi \lb \frac{u_n-\sqrt{r}(x_1+x_2)}{\sqrt{1-2r}} \rb \right] \cdot \left[ 1- \Phi \lb \frac{u_n-\sqrt{r}(x_1+x_3)}{\sqrt{1-2r}} \rb \right] \\
    & \quad \times \exp \lb -\frac{x_1^2+x_2^2+x_3^2}{2} \rb dx_1dx_2dx_3.
\end{align*}
To bound this integral, let us first do a change of variables to shift the integral to the positive axis.
Setting $\Delta_i:= t_n - x_i$ for  $i=1,2,3$, we get the bound 
\begin{align*}
         \mb{P} \lb A_{12}^c \cap A_{13}^c \rb  
        \asymp & \iiint_{[0,2t_n]^3}   \left[ 1- \Phi \lb L_n + \frac{\sqrt{r}(\Delta_1 + \Delta_2)}{\sqrt{1-2r}} \rb \right] \cdot \left[ 1- \Phi \lb L_n + \frac{\sqrt{r}(\Delta_1 + \Delta_3)}{\sqrt{1-2r}} \rb \right] \\
        & \times \exp \left[ - \frac{\sum_{i=1}^3 \lb  t_n - \Delta_i  \rb^2 }{2} \right] d \bm{\Delta}\,,
\end{align*}
where { $d \bm{\Delta} =d\Delta_1 d\Delta_2 d\Delta_3$ and} 
\begin{align*}
    L_n:= \frac{u_n-2\sqrt{r}t_n}{\sqrt{1-2r}} &= { \frac{2\sqrt{1-2r}\sqrt{\log n}}{1+\sqrt{2r}} }+ O\lb \frac{\log \log n}{\sqrt{f_n \log n}} \rb + O \lb \frac{1}{\sqrt{\log n}} \rb.
\end{align*}

\noindent Under the assumption $f_n \sqrt{\log n} \gg \log \log n$, we have $L_n \geq  \sqrt{f_n \log n}(1+o(1)) \to \infty$. 
By using the elementary bound $1-\Phi(x) \lesssim \exp \lb -x^2/2 \rb$ for all $x\geq 0$, we have 
\begin{align*}
   &\iiint_{[0,2t_n]^3}   \left[ 1- \Phi \lb L_n + \frac{\sqrt{r}(\Delta_1 + \Delta_2)}{\sqrt{1-2r}} \rb \right] \cdot \left[ 1- \Phi \lb L_n + \frac{\sqrt{r}(\Delta_1 + \Delta_3)}{\sqrt{1-2r}} \rb \right] \\
   &\quad \times \exp \left[ - \frac{\sum_{i=1}^3 \lb  t_n - \Delta_i  \rb^2 }{2} \right] d \bm{\Delta} \\
        \lesssim & \exp \lb -L_n^2 - \frac{3t_n^2}{2} \rb\iiint_{[0,2t_n]^3} \exp \left[  -\frac{\sum_{i=1}^3 \Delta_i^2}{2} +t_n \sum_{i=1}^3 \Delta_i - \frac{L_n\sqrt{r}}{\sqrt{1-2r}} \lb 2\Delta_1 + \Delta_2 + \Delta_3  \rb\right] \\
    &\quad \times \exp \left[ - \frac{r}{2(1-2r)} \lb (\Delta_1+ \Delta_2)^2 + \lb \Delta_1 + \Delta_3  \rb^2 \rb\right] d\bm{\Delta}.
\end{align*}
Enlarging the domain of integration to $[0,\infty)^3$, we get the upper bound 
\begin{align*}
       \mb{P} \lb A_{12}^c \cap A_{13}^c \rb \lesssim &  \exp \lb -L_n^2 - \frac{3t_n^2}{2} \rb \times \iiint_{[0,\infty)^3} \exp \left[ -\frac{1}{2}\bm{\Delta}^\top \bm{H} \bm{\Delta} + \bm{g}^{\top}\bm{\Delta} \right] d\bm{\Delta}\,,
\end{align*}
where $\bm{H}:= \bm{I}_3 +\bm{M}$,
\begin{align*}
   \bm{M}:=  \alpha^2  \begin{pmatrix}
        2 & 1 & 1 \\
        1 & 1 & 0 \\
        1 & 0 & 1
    \end{pmatrix}  \qquad \text{ and } \qquad \bm{g} := \begin{pmatrix}
        t_n -2\alpha L_n \\
        t_n -\alpha L_n \\
        t_n - \alpha L_n
    \end{pmatrix}
\end{align*}
with $\alpha:=\sqrt{r/(1-2r)}$. To treat the integral term, note that
\begin{align*}
    & \iiint_{[0,\infty)^3} \exp \left[ -\frac{1}{2}\bm{\Delta}^\top \bm{H} \bm{\Delta} + \bm{g}^{\top}\bm{\Delta} \right] d\bm{\Delta} \\
    = & \iiint_{[0,\infty)^3} \exp \left[ -\frac{1}{2} \lb \bm{\Delta} - \bm{H}^{-1} \bm{g} \rb^\top \bm{H} \lb \bm{\Delta} - \bm{H}^{-1} \bm{g} \rb + \frac{1}{2} \bm{g}^\top \bm{H}^{-1}\bm{g}\right] d\bm{\Delta} \\
     = &  \exp \lb   \frac{1}{2} \bm{g}^\top \bm{H}^{-1}\bm{g} \rb \cdot \iiint_{[0,2t_n]^3-\bm{H}^{-1} \bm{g}}   \exp \lb   \frac{1}{2} \bm{y}^\top \bm{H}\bm{y} \rb d\bm{y} \\
 \leq  &   \exp \lb   \frac{1}{2} \bm{g}^\top \bm{H}^{-1}\bm{g} \rb \cdot \iiint_{[0,\infty)^3-\bm{H}^{-1} \bm{g}}   \exp \lb   \frac{1}{2} \bm{y}^\top \bm{H}\bm{y} \rb d\bm{y}.
\end{align*}
{ where the second equality follows from the change of variable $\bm{y}=\bm{\Delta}-  \bm{H}^{-1} \bm{g}  $.}

One can check that $\bm{H}$ has the three eigenvalues $1+3\alpha^2, 1+\alpha^2,1$, and thus,
\[
\lb \mbox{det}(\bm{H}) \rb^{-1/2} = \frac{1}{\sqrt{(1+3\alpha^2)(1+\alpha^2)}}.
\]
 To find $\bm{H}^{-1}$, it suffices to diagonalize the matrix $\alpha^{-2} \bm{M}$. This matrix has three simple eigenvalues $\lambda_1=3, \lambda_2=1$ and $\lambda_3=0$. The corresponding eigenvectors are 
 \[
 \bm{u}_1:= (2,1,1)^\top; \quad \bm{u}_2:=(0,1,-1)^\top; \quad \bm{u}_3:= (1,-1,-1)^\top.
 \]
Thus, $\bm{g}$ can be written as
\begin{align*}
    \bm{g}= \underbrace{\frac{2t_n-3\alpha L_n}{3}}_{a_1} \bm{u}_1 - \underbrace{\frac{t_n}{3}}_{a_3} \bm{u_3}.
\end{align*}
Consequently,
\begin{align*}
   \bm{H}^{-1} \bm{g} &= \frac{a_1}{1+3\alpha^2} \bm{u}_1  - a_3 \bm{u_3} \\
    &= \frac{2t_n-3\alpha L_n}{3(1+3\alpha^2)} \cdot \lb 2,1,1  \rb^\top  - \frac{t_n}{3} \lb 1,-1,-1 \rb^\top \\
    &= \lb \frac{4t_n-6\alpha L_n}{3(1+3\alpha^2)} - \frac{t_n}{3}, \frac{2t_n-3\alpha L_n}{3(1+3\alpha^2)} + \frac{t_n}{3}, \frac{2t_n-3\alpha L_n}{3(1+3\alpha^2)} + \frac{t_n}{3} \rb^\top \\
    &= \underbrace{\lb \frac{t_n(1-\alpha^2)-2\alpha L_n}{1+3\alpha^2}, \frac{t_n(1+\alpha^2)-\alpha L_n}{1+3\alpha^2}, \frac{t_n(1+\alpha^2)-\alpha L_n}{1+3\alpha^2}  \rb^\top}_{\bm{\mu}:=(\mu_1,\mu_2,\mu_3)^\top}.
\end{align*}
and
\begin{align*}
       \exp \lb   \frac{1}{2} \bm{g}^\top \bm{H}^{-1}\bm{g} \rb  &= \exp \left[ \frac{1}{2} \lb a_1 \bm{u}_1^\top -a_3 \bm{u}_3^\top \rb \bm{H}^{-1}  \lb a_1 \bm{u}_1 -a_3 \bm{u}_3 \rb  \right] \\
       &= \exp \left[ \frac{1}{2} \lb a_1 \bm{u}_1^\top -a_3 \bm{u}_3^\top \rb \bm{H}^{-1}  \lb a_1 \bm{u}_1 -a_3 \bm{u}_3 \rb  \right] \\
       &= \exp \left[  \frac{1}{2} \lb a_1 \bm{u}_1^\top -a_3 \bm{u}_3^\top \rb \lb \frac{a_1 \bm{u}_1}{1+3\alpha^2} - a_3\bm{u}_3 \rb  \right] \\
       &= \exp \lb   \frac{3a_1^2}{1+3\alpha^2} + \frac{3}{2} a_3^2  \rb = \exp \left[ \frac{(2t_n-3\alpha L_n)^2}{3(1+3\alpha^2)} + \frac{t_n^2}{6}\right] .
\end{align*}
The domain of integration after the change of variable is    
$[0,\infty)^3 - \bm{\mu} = \otimes_{i=1}^3 [-\mu_i,\infty)$.
Let $\bm{Z}=(Z_1, Z_2, Z_3)$ be a Gaussian vector in $\mb{R}^3$ with distribution $N \lb  \bm{0}, \bm{H}^{-1} \rb$. One can see that
\begin{align*}
    \iiint_{[0,\infty)^3-\bm{\mu} }   \exp \lb   \frac{1}{2} \bm{y}^\top \bm{H}\bm{y} \rb d\bm{y} &= \mbox{det}(\bm{H}) ^{-1/2} \times \mb{P} \lb  \bm{Z} \geq -\bm{\mu} \rb \\
    &\leq  \frac{\mb{P} \lb  Z_1 \geq -\mu_1 \rb}{\sqrt{(1+3\alpha^2)(1+\alpha^2)}}. 
\end{align*}
By definition of $\mu_1$, we have
 \begin{align*}
     \mu_1 = \frac{-t_n\alpha^2}{1+3\alpha^2} - \frac{2\alpha L_n-t_n}{1+3\alpha^2}.
 \end{align*}
Note that $Z_1 \sim N \lb 0 , \lb \bm{H}^{-1}\rb_{11} \rb$, where
\begin{align*}
    \lb \bm{H}^{-1}\rb_{11} = \lb \lb \bm{I} + \bm{M} \rb^{-1} \rb_{11} = \frac{1}{3} + \frac{2}{3(1+3\alpha^2)} = \frac{1-r}{1+r}.
\end{align*}
Thus, we have established that
\begin{align*}
     \mb{P} \lb A_{12}^c \cap A_{13}^c \rb \lesssim& \frac{ \exp \left[ -L_n^2 -\frac{3t_n^2}{2} + \frac{(2t_n-3\alpha L_n)^2}{3(1+3\alpha^2)} + \frac{t_n^2}{6} \right]}{\sqrt{(1+3\alpha^2)(1+\alpha^2)}}  \mb{P} \lb N \lb 0, \frac{1-r}{1+r} \rb \geq \frac{t_n\alpha^2}{1+3\alpha^2} + \frac{2\alpha L_n-t_n}{1+3\alpha^2} \rb.
\end{align*}
A direct computation shows that
\begin{align*}
    \exp \left[ -L_n^2 -\frac{3t_n^2}{2} + \frac{(2t_n-3\alpha L_n)^2}{3(1+3\alpha^2)} + \frac{t_n^2}{6} \right] &= \exp \lb -\frac{u_n^2}{1+r} \rb, \\
     \frac{t_n\alpha^2}{1+3\alpha^2} + \frac{2\alpha L_n-t_n}{1+3\alpha^2} &= \frac{2\sqrt{r}u_n - (1+r)t_n}{1+r}.
\end{align*}
Thanks to the elementary bound $1-\Phi(x) \lesssim \exp \lb -x^2/2 \rb$ for all $x\geq 0$, we have 
\begin{align*}
    \mb{P} \lb N \lb 0, \frac{1-r}{1+r} \rb \geq \frac{2\sqrt{r}u_n - (1+r)t_n}{1+r} \rb &\lesssim \exp \left[ -\frac{\lb  2\sqrt{r}u_n - (1+r)t_n\rb^2}{2(1+r)^2} \cdot \frac{1+r}{1-r}\right] \\
    &= \exp \left[ -\frac{\lb  2\sqrt{r}u_n - (1+r)t_n\rb^2}{2(1-r^2)} \right].
\end{align*}
Putting everything together, we have
\begin{align}
     n^3 \mb{P} \lb A_{12}^c \cap A_{13}^c \rb \lesssim & \exp \lb 3 \log n -\frac{u_n^2}{1+r} - \frac{\lb  2\sqrt{r}u_n - (1+r)t_n\rb^2}{2(1-r^2)} \rb \nonumber \\
     =&\exp \left[ -g(r) \log n + O \lb \log \log n \rb + O(1)  \right] \label{b2-final}
\end{align}
where
\[
g(r):= \frac{4}{1+r} + \frac{\lb 4\sqrt{r} - \sqrt{2}(1+r)  \rb^2}{2(1-r^2)} -3={ \frac{2(1-\sqrt{2r})^2}{1-r}=\frac{2(1-2r)^2}{(1-r)(1+\sqrt{2r})^2}\ge \frac12f_n^2 }.
\]

The proof is completed by combining \eqref{b2-final}, the display above and the fact that $f_n^2 \log n \gg \log \log n$. $\hfill$ $\square$
\section{Proof of Theorem \ref{critical}} \label{sec-proof 2}
Recall the definitions of $c_n$ and $d_n$ from \eqref{dn} and the representation for $G_{ij}$ in \eqref{Gaussian representation}. Define
\[
Z_{i}:= c_n \lb X_i - d_n \rb.
\]
It is well known that the point process {$N_n := \sum_{i=1}^n \delta_{Z_i} \stackrel{d}{\to} N:= \sum_{i=1}^\infty \delta_{\eta_i}$. The limiting process $N$  a Poisson point process (PPP)} with intensity measure $e^{-x}\,dx$; (see, e.g., \cite{embrechts:kluppelberg:mikosch:1997}).
As a consequence of the point process convergence, for any $k \geq 1$, we have 
\begin{align} \label{convergence point process}
    \lb Z_{(1)}, Z_{(2)}, \dots, Z_{(k)} \rb \stackrel{d}{\to} \lb \eta_1, \eta_2, \dots, \eta_k \rb
\end{align}
where $Z_{(1)}\ge \cdots \ge Z_{(n)}$ denotes the order statistics of $Z_1,\ldots, Z_n$ and $\eta_i$'s are as in \eqref{eta}.

The proof of Theorem \ref{critical} is based on the following result 
\begin{proposition} \label{perturbed point process}
    For any $c>0$ we have 
    \[
    \sup_{1\leq i<j \leq n} \la  \frac{Z_{(i)} + Z_{(j)}}{\sqrt{2}} + cY_{ij} \ra \stackrel{d}{\to}  \sup_{i<j} \la  \frac{\eta_i + \eta_j}{\sqrt{2}} + c\mc{Y}_{ij} \ra 
    \]
    where $\la \eta_i \ra_{i=1}^\infty$ are as in \eqref{eta} and $\mc{Y}_{ij}$'s are i.i.d. standard normals independent from $\eta_i$'s.
\end{proposition}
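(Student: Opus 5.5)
The plan is a truncation argument: reduce the supremum to finitely many top order statistics, use the convergence \eqref{convergence point process} there, and control the remaining indices uniformly. Throughout we use that the $Y_{ij}$ are independent of the $X_i$. Hence, conditionally on the ranking, the relabelled family $\{Y_{ij}\}$ indexed by the order statistics is again i.i.d.\ standard normal and independent of $(Z_{(1)},\dots,Z_{(n)})$. Write
\[
M_n := \sup_{1\le i<j\le n}\Bigl\{\tfrac{Z_{(i)}+Z_{(j)}}{\sqrt2}+cY_{ij}\Bigr\},\qquad M:=\sup_{i<j}\Bigl\{\tfrac{\eta_i+\eta_j}{\sqrt2}+c\mathcal Y_{ij}\Bigr\}.
\]
Let $M_n^{(K)}$ and $M^{(K)}$ denote the same suprema restricted to $i<j\le K$.

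\textbf{Finite-$K$ step.} For fixed $K$, \eqref{convergence point process} and the independence of the $Y_{ij}$ give joint convergence
\[
\bigl(Z_{(1)},\dots,Z_{(K)},(Y_{ij})_{i<j\le K}\bigr)\stackrel{d}{\to}\bigl(\eta_1,\dots,\eta_K,(\mathcal Y_{ij})_{i<j\le K}\bigr).
\]
The continuous mapping theorem then yields $M_n^{(K)}\stackrel{d}{\to} M^{(K)}$. Moreover $M^{(K)}\uparrow M$ almost surely, and $M$ is a.s.\ finite by Lemma \ref{well defined}. By a standard approximation argument (Billingsley, Thm.\ 3.2), it then suffices to show that for every $x\in\mathbb R$,
\[
\lim_{K\to\infty}\limsup_n \mathbb P\bigl(M_n>x,\ M_n^{(K)}\le x\bigr)=0 .
\]
Since $M_n\ge M_n^{(K)}$, this reduces to bounding $\mathbb P(\exists\, i<j,\ j>K: \text{term}_{ij}>x)$.

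\textbf{Tail step (main obstacle).} Fix $x$ and condition on the $X$'s. By a union bound over pairs with $j>K$, the conditional probability is at most
\[
\sum_{i<j,\ j>K}\bar\Phi\Bigl(\bigl(x-\tfrac{Z_{(i)}+Z_{(j)}}{\sqrt2}\bigr)/c\Bigr).
\]
The difficulty is that there are about $n^2$ pairs, while most $Z_{(i)}$ are hugely negative (of order $-c_n d_n\approx-2\log n$). So I need a uniform-in-$n$ control of the point process over its whole range, not only near the top. I would first intersect with a high-probability event on which $Z_{(1)}\le A$ and the counts $N_n([-m,m+1))\le C e^{m}$ hold for all $m\ge 0$, up to the relevant range. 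This is available from Binomial tail bounds, since $\mathbb E N_n([a,\infty))\approx e^{-a}$ uniformly for $a\ge -\log n\cdot(\text{const})$. On this event, group the points in unit bins. The expected number of pairs with both points in bins near $-m_1$ and $-m_2$ is of order $e^{m_1+m_2}$, and each such pair contributes a tail factor
\[
\bar\Phi\Bigl(\bigl(x+\tfrac{m_1+m_2}{\sqrt2}\bigr)/c\Bigr)\lesssim \exp\Bigl(-\tfrac{(m_1+m_2)^2}{4c^2}\Bigr).
\]
The Gaussian decay in $m_1+m_2$ beats the exponential growth $e^{m_1+m_2}$, so the double sum converges. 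Its part involving $j>K$ is small because the $K$-th point lies near $-\log K$. For the extremely negative region, where $Z_{(j)}$ is of order $-\log n$, I would use the crude bound $n^2\bar\Phi(c'\log n/c)\to0$, which is superpolynomially small. I expect getting the bin counts uniformly over $m$ up to order $\log n$ to be the delicate part. I would handle it by taking the expectation of the union bound directly over the i.i.d.\ $X_i$, without conditioning on the order statistics:
\[
\mathbb E\#\bigl\{i\ne j:\ \tfrac{Z_i+Z_j}{\sqrt2}+cY_{ij}>x,\ \text{and } Z_i \text{ or } Z_j<-\log K\bigr\}\le n^2\,\mathbb P\bigl(\tfrac{Z_1+Z_2}{\sqrt2}+cY>x,\ Z_2<-\log K\bigr).
\]
Using $n\,\mathbb P(Z_1\in dz)\approx e^{-z}dz$ (valid for $z\gtrsim -\log n$, with Gaussian correction below), this becomes a double integral of $e^{-z_1-z_2}\,\bar\Phi\bigl((x-(z_1+z_2)/\sqrt2)/c\bigr)$. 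That integral is finite and its tail over $z_2<-\log K$ tends to $0$ as $K\to\infty$.

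\textbf{Closing the argument.} Since $\{Z_{(K)}<-\log K/2\}$ has probability tending to one as $K\to\infty$ uniformly in large $n$ (by \eqref{convergence point process} and $\eta_K\approx-\log K$), the index condition $j>K$ can be replaced by the level condition $Z_j<-\log K/2$. The expectation bound above then controls it, which completes the proof.
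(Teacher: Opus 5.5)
There is a genuine gap in your tail step, at exactly the point where you resolve the ``delicate part''. You claim that $\iint e^{-z_1-z_2}\,\bar\Phi\bigl((x-(z_1+z_2)/\sqrt2)/c\bigr)\,dz_1\,dz_2$ is finite and that its tail over $\{z_2<-\log K\}$ vanishes. This is false. On each line $z_1+z_2=w$ the integrand is the constant $e^{-w}\bar\Phi((x-w/\sqrt2)/c)$, and $\{z_2<-\log K\}$ meets that line in a half-line. The paper points out exactly this just before the proof of Proposition \ref{perturbed point process}: $\sum_{i<j}\delta_{\eta_i+\eta_j}$ has infinite intensity on every compact interval.

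At finite $n$ the Gaussian correction $n\,\mathbb{P}(Z_1\in dz)\approx e^{-z-z^2/(2c_n^2)}\,dz$ makes your quantity finite. However, $n^2\,\mathbb{P}\bigl(\tfrac{Z_1+Z_2}{\sqrt2}+cY>x,\ Z_2<-\log K\bigr)$ is of order $\sqrt{\log n}$ for every fixed $K$. Indeed, $n^2\mathbb{P}(Z_1+Z_2>y)\sim\sqrt{2\pi\log n}\,e^{-y}$. Moreover, $X_1-X_2\sim N(0,2)$ is independent of $X_1+X_2$, so $\{Z_2<-\log K\}$ has conditional probability about $1/2$. The expectation is carried by configurations with $Z_1\approx -Z_2$ of size $\sqrt{\log n}$, that is, by the rare event that the top point is abnormally high. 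So your displayed first-moment bound is true but diverges, and the closing paragraph does not go through.

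The missing idea is to keep your truncation $\{Z_{(1)}\le A\}$ inside the expectation rather than dropping it. Bound $\mathbb{P}(M_n>x,\,M_n^{(K)}\le x)$ by $\mathbb{P}(Z_{(1)}>A)+\mathbb{P}(Z_{(K)}\ge-\tfrac12\log K)$ plus the expected number of pairs with $Z_i\le A$, $Z_j<-\tfrac12\log K$ and $\tfrac{Z_i+Z_j}{\sqrt2}+cY_{ij}>x$. Since $d_n<c_n$, one has $n\,\mathbb{P}(Z_1\in dz)\le C_Ae^{-z}\,dz$ on $(-\infty,A]$ uniformly in $n$. Each line $z_1+z_2=w$ now meets the domain in a segment of length at most $(A-\tfrac12\log K-w)^+$, which is empty unless $w<A-\tfrac12\log K$. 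The expectation is therefore at most $C_A^2\int_{-\infty}^{A-\frac12\log K}(A-w)\,e^{-w}\,\bar\Phi\bigl((x-w/\sqrt2)/c\bigr)\,dw$. This tends to $0$ as $K\to\infty$, because the Gaussian decay $\approx e^{-w^2/(4c^2)}$ beats $|w|e^{|w|}$.

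With this repair your route becomes a clean first-moment alternative to the paper's Lemma \ref{localization}. That lemma instead proves the pathwise envelope $Z_{(k)}\le-\tfrac12\log k$ for all $k\ge K_1(\varepsilon)$ via a Chernoff bound, bounds $Z_{(1)}$ separately, and then takes a union bound only over the $Y_{ij}$. Your earlier bin-count idea, which you abandoned, is essentially that route. Your finite-$K$ step coincides with the paper's proof of \eqref{limsup}.
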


    It is worth noting that Proposition \ref{perturbed point process} does {\it not} follow directly from a continuous mapping argument. This is because the map 
    \[
    f: \sum_{i} \delta_{x_i} \mapsto \sum_{i<j} \delta_{x_i+x_j}
    \]
    is not almost surely continuous at $N$. In fact, the map $f$ is almost surely discontinuous at $N$. Interestingly, the point process $f(N)$ is locally finite (and thus, is well-defined) but has infinite intensity measure on every compact intervals. To see this, take $[a,b] \subset \mb{R}$ and observe that 
    \begin{align*}
    \mb{E} f(N)[a,b] = \iint_{\mb{R}^2} \mathbf{1}_{\la x+y \in [a,b] \ra} \cdot e^{-x-y} dxdy &= \int_{\mb{R}} e^{-x} \int_{a-x}^{b-x} e^{-y} dy dx \\
    &= \int_{\mb{R}} e^{-x} \lb e^{x-a} - e^{x-b} \rb dx = \infty.
    \end{align*}
    However, the forms of $\eta_i$'s in \eqref{eta} imply that $f(N)$ is locally finite almost surely. An explicit example showing that $f$ is not continuous at any realization of $N$ can be constructed as follows: take $M>0$ large enough and define
    \[
     \mathcal{N}_n := \sum_{i=1}^n \delta_{\eta_i} + \delta_{-\eta_n+ M}.
    \]
    It is easy to see that $\mathcal{N}_n$ converges to $N$ in the vague topology but the image $f(\mathcal{N}_n)$ contains $\delta_M$ for all $n \geq 1$. Therefore, $f(\mathcal{N}_n)$ does not converge to $f(N)$ if $M> \eta_1+\eta_2$. This construction shows that $f$ is not continuous at any point configuration that forms a sequence decreasing to negative infinity.
    \smallskip

\noindent \textbf{Proof of Proposition \ref{perturbed point process}.} For $x\in \mathbb{R}$, define
\begin{align*}
    F_n(x)& := \mb{P} \lb \forall 1\leq i<j \leq n:  \frac{Z_{(i)} + Z_{(j)}}{\sqrt{2}} + cY_{ij} \leq x  \rb, \\
    F(x)& := \mb{P} \lb \forall 1\leq i<j:  \frac{\eta_i + \eta_j}{\sqrt{2}} + c \mc{Y}_{ij} \leq x  \rb.
\end{align*}
It suffices to prove that
\begin{align} \label{limsup}
    \limsup_{n \to \infty} F_n(x) \leq F(x)
\end{align}
and 
\begin{align} \label{liminf}
    \liminf_{n \to \infty} F_n(x) \geq F(x).
\end{align}

\noindent \underline{\textit{Proof of \eqref{limsup}.}} Fix a number $1\le K\le n$ and notice that 
\begin{align*}
    F_n(x) &\leq \mb{P} \lb \forall 1\leq i<j \leq K:  \frac{Z_{(i)} + Z_{(j)}}{\sqrt{2}} + cY_{ij} \leq x  \rb.
\end{align*}
By \eqref{convergence point process} and the continuous mapping theorem, we have 
\[
\lb Z_{(i)} + Z_{(j)} \rb_{1 \leq i<j \leq K} \stackrel{d}{\to} \lb \eta_{i} + \eta_{j} \rb_{1 \leq i<j \leq K}.
\]
Therefore,
\begin{align*}
\limsup_{n \to \infty} F_n(x) &\leq \lim_{n \to \infty} \mb{P} \lb \forall 1\leq i<j \leq K:  \frac{Z_{(i)} + Z_{(j)}}{\sqrt{2}} + cY_{ij} \leq x  \rb \\
&= \mb{P} \lb \forall 1\leq i<j \leq K:  \frac{\eta_{i} + \eta_{j}}{\sqrt{2}} + c\mc{Y}_{ij} \leq x  \rb
\end{align*}
and since this is true for all $K$, we get \eqref{limsup} by letting $K \to \infty$.
\smallskip

\noindent \underline{\textit{Proof of \eqref{liminf}.}} Fix $\ve>0$, by Lemma \ref{localization}, there exists $C_\ve >0$ which depends only on $\ve$ such that
\[
\liminf_{n \to \infty} \mb{P} \lb \argmax_{i<j} \lb  \frac{Z_{(i)} + Z_{(j)}}{\sqrt{2}} + cY_{ij} \rb \in \la 1,2,\dots,K  \ra^2 \rb \geq 1-\ve
\]
for all $K \geq C_\ve$. 
Therefore, for any $K \geq C_\ve$, we have
\begin{align*}
    \liminf_{n \to \infty} F_n(x) &\geq  \liminf_{n \to \infty} \mb{P} \lb \forall 1\leq i<j \leq K:  \frac{Z_{(i)} + Z_{(j)}}{\sqrt{2}} + cY_{ij} \leq x  \rb - \ve \\
    &= \mb{P} \lb \forall 1\leq i<j \leq K:  \frac{\eta_{i} + \eta_{j}}{\sqrt{2}} + c\mc{Y}_{ij} \leq x  \rb - \ve.
\end{align*}
Since this is true for any $K \geq C_\ve$, we get \eqref{liminf} by first letting $K \to \infty$ and then letting $\ve \to 0$. This completes the proof of Proposition \ref{perturbed point process}. $\hfill$ $\square$
\smallskip

We are now ready to prove Theorem~\ref{critical}. Without loss of generality, we may assume that 
\begin{equation}\label{eq:assuse}
\frac{(1-2r)\log n}{2r} = \lambda \in(0,\infty).
\end{equation}
In fact, given a sequence of Gaussian fields $\mc{G}_n$ as in \eqref{sparse equi GF}, with corresponding parameters $r_n$ such that $(1-2r_n)\log n \to \lambda$, one can obtain two-sided bounds on $\mc{G}_n$ by considering the Gaussian fields $\overline{\mc{G}}_n$ and $\underline{\mc{G}}_n$ with parameters $\overline{r}$ and $\underline{r}$, respectively, chosen so that
\begin{align*}
    (1-2\overline{r}) \log n &= \lambda - \ve,  \\
    (1-2\underline{r}) \log n &= \lambda + \ve,
\end{align*}
for some $\ve \in (0,c)$.

This further implies that for all $n$ large enough, $\underline{r} \leq r \leq \overline{r}$. Thus, Slepian's lemma gives
\begin{align*}
    \mb{P} \lb c_n \lb \max_{1\leq i<j \leq n} G_{ij} - \sqrt{2}d_n \rb \leq x \rb &\leq \mb{P} \lb c_n \lb \max_{1\leq i<j \leq n} \overline{G}_{ij} - \sqrt{2}d_n \rb \leq x \rb, \\
     \mb{P} \lb c_n \lb \max_{1\leq i<j \leq n} G_{ij} - \sqrt{2}d_n \rb \leq x \rb &\geq \mb{P} \lb c_n \lb \max_{1\leq i<j \leq n} \underline{G}_{ij} - \sqrt{2}d_n \rb \leq x \rb.
\end{align*}
for all $x \in \mb{R}$. We get the desired conclusion by first letting $n \to \infty$ and then letting $\ve \to 0$.

Now suppose \eqref{eq:assuse}. By Proposition \ref{perturbed point process}, we have
\begin{align*}
    c_n \lb \max_{1\leq i<j \leq n} \frac{G_{ij}}{\sqrt{2r}} - \sqrt{2}d_n  \rb &= \max_{1 \leq i<j \leq n} \la \frac{Z_i+Z_j}{\sqrt{2}} + \sqrt{2\lambda} Y_{ij}  \ra \\
    &\stackrel{d}{=}  \max_{1 \leq i<j \leq n} \la \frac{Z_{(i)}+Z_{(j)}}{\sqrt{2}} + \sqrt{2\lambda} Y_{ij}  \ra \\
    &\stackrel{d}{\to}  \sup_{i<j} \lb \frac{\eta_i+ \eta_j}{\sqrt{2}} + \sqrt{2\lambda} \cdot  Z_{ij} \rb.
\end{align*}
{ Note that the second line in the display above follows from the exchangeability of $\la Y_{ij}; 1\leq i<j\leq n \ra$: if $\pi$ is a permutation of $[n]$ such that $Z_{(i)} = Z_{\pi(i)}$, then $\pi$ is uniformly distributed on the permutation group, independent from $Y_{ij}$'s, and 
\begin{align*}
&\mb{P} \lb \max_{1 \leq i<j \leq n} \la \frac{Z_i+Z_j}{\sqrt{2}} + \sqrt{2\lambda} Y_{ij}  \ra \leq x \rb \\
=& \mb{P} \lb \max_{1 \leq i<j \leq n} \la \frac{Z_{(i)}+Z_{(j)}}{\sqrt{2}} + \sqrt{2\lambda} Y_{\pi(i)\pi(j)}  \ra \leq x \rb \\
=& \mb{E} \left[ \mb{P} \lb  \forall 1\leq i<j \leq n:  \sqrt{2\lambda} \cdot Y_{\pi(i)\pi(j)} \leq  x - \frac{Z_{(i)}+Z_{(j)}}{\sqrt{2}}  \Big| Z_1,\dots,Z_n \rb \right] \\
= & \mb{E} \left[ \mb{P} \lb  \forall 1\leq i<j \leq n:  \sqrt{2\lambda} \cdot Y_{ij} \leq  x - \frac{Z_{(i)}+Z_{(j)}}{\sqrt{2}}  \Big| Z_1,\dots,Z_n \rb \right] \\
= & \mb{P} \lb \max_{1 \leq i<j \leq n} \la \frac{Z_{(i)}+Z_{(j)}}{\sqrt{2}} + \sqrt{2\lambda} Y_{ij}  \ra \leq x \rb.
\end{align*}
}

We now complete the proof of Theorem~\ref{critical} by noting that
\begin{align*}
     c_n \lb \max_{1\leq i<j \leq n} \frac{G_{ij}}{\sqrt{2r}} - \sqrt{2}d_n  \rb & = (1+o(1))\cdot  c_n \lb \max_{1\leq i<j \leq n} G_{ij} - \sqrt{2r}\cdot \sqrt{2}d_n  \rb
\end{align*}
and
\[
c_n \lb 1 - \sqrt{2r}\rb\sqrt{2}d_n = \frac{(1-2r)2 \log(n)\cdot(1+o(1)))}{1+\sqrt{2r}} \to \lambda.
\]
$\hfill$ $\square$

\section{Proof of Theorem \ref{0}} \label{sec-proof 3}
This is a consequence of Theorem \ref{critical} and Slepian's lemma. To see this, consider two Gaussian fields $\overline{\mc{G}}_n$ and $\underline{\mc{G}}_n$ of the form \eqref{sparse equi GF} {with parameters $\overline{r}=1/2$ (resp.) $\underline{r}$ chosen so that $(1-2\underline{r}) \log n =  \ve$}
for some $\ve>0$.

The proof of Theorem \ref{0} now follows from the same Slepian's lemma argument as in the proof of Theorem \ref{critical}, which gives a two-sided bound, then first letting $n \to \infty$ and then letting $\ve \to 0$. $\hfill$ $\square$


\section{Technical results} \label{sec-technical}
Some useful asymptotic expansions will be collected in this Section.

 \begin{lemma} \label{positive definite}
     Define the square matrix $\bm{A}$ of size $p(p-1)/2$ on $I=\la  (i,j): 1\leq i<j \leq p \ra$ as
     \[
     \bm{A}_{\bm{e},\bm{f}} = \begin{cases}
           0, &  \big|\la i,j \ra \cap \la k,l \ra \big| = 0; \\
         b \in [0,1/2), & \bigm| \la i,j \ra \cap \la k,l \ra \big| = 1;\\
          1, & \bigm| \la i,j \ra \cap \la k,l \ra \big| = 2;
     \end{cases}
     \]
     where $\bm{e}=(i,j)$ and $\bm{f}=(k,l)$. Then, for all $p\geq 4$, the smallest eigenvalue of $\bm{A}$ is $\lambda_{min}\lb \bm{A} \rb = 1-2b  $.
 \end{lemma}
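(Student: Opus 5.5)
Throughout, write $N:=p(p-1)/2$ for the size of $\bm{A}$. The matrix $\bm{A}$ lives in the Johnson scheme $J(p,2)$: it equals $\bm{I}+b\,\bm{A}_1$, where $\bm{A}_1$ is the adjacency matrix of the line graph of $K_p$ (the triangular graph $T(p)$). So the plan is to diagonalize $\bm{A}_1$ explicitly through the vertex--edge incidence matrix, read off its spectrum, and then transfer it to $\bm{A}$.

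Let $\bm{B}$ be the $p\times N$ incidence matrix of $K_p$, with $\bm{B}_{v,\{i,j\}}=\mathbf{1}\{v\in\{i,j\}\}$. For two edges $\bm{e},\bm{f}$ one has $(\bm{B}^\top\bm{B})_{\bm{e},\bm{f}}=|\bm{e}\cap\bm{f}|$. Hence
\[
\bm{B}^\top\bm{B}=2\bm{I}_{N}+\bm{A}_1,
\qquad\text{and therefore}\qquad
\bm{A}=(1-2b)\bm{I}_{N}+b\,\bm{B}^\top\bm{B}.
\]
Since $b\ge 0$ and $\bm{B}^\top\bm{B}$ is positive semidefinite, it follows immediately that $\lambda_{\min}(\bm{A})\ge 1-2b$.

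Equality holds precisely when $\bm{B}^\top\bm{B}$ has a nontrivial kernel, that is, when $\bm{B}$ has nontrivial null space. As $\bm{B}$ is $p\times N$, its rank is at most $p$, so $\dim\ker\bm{B}\ge N-p=p(p-3)/2>0$ for $p\ge 4$. Any $\bm{x}\in\ker\bm{B}$ satisfies $\bm{A}\bm{x}=(1-2b)\bm{x}$, which gives $\lambda_{\min}(\bm{A})=1-2b$.

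As an explicit witness, I will also exhibit the vector from the introduction: take $+1$ on edges $12$ and $34$, $-1$ on edges $23$ and $14$, and $0$ elsewhere. Every vertex is incident to one $+1$ edge and one $-1$ edge, so this vector lies in $\ker\bm{B}$. This also explains the variance identity $4-8r$ computed there.

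For completeness, the full spectrum can be recorded. $\bm{B}\bm{B}^\top=(p-2)\bm{I}_p+\bm{J}_p$ has eigenvalues $2(p-1)$ (multiplicity one) and $p-2$ (multiplicity $p-1$). The nonzero eigenvalues of $\bm{B}^\top\bm{B}$ coincide with these, and the remaining eigenvalue $0$ has multiplicity $N-p$. Consequently $\bm{A}$ has eigenvalues $1+2b(p-2)$, $1+b(p-4)$ and $1-2b$. For $p\ge 4$ the last one is the smallest.

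No step is a genuine obstacle. The only points needing care are the identity $\bm{B}^\top\bm{B}=2\bm{I}+\bm{A}_1$ and the role of $p\ge 4$, which is exactly what guarantees $N>p$ and hence a nontrivial kernel. Note that for $p=3$ one has $N=p$, the kernel is trivial, and the minimum eigenvalue is instead $1-b$. The assumption $b<1/2$ is used only to conclude that $\lambda_{\min}(\bm{A})>0$, as needed in the proof of Proposition \ref{maximum interpoint}.
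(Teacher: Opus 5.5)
Your proof is correct, and it takes a somewhat different route from the paper's. The paper uses the same incidence vectors $\bm{s}^{(v)}$, which are the rows of your $\bm{B}$. It splits $\mathbb{R}^{p(p-1)/2}$ into $\mathrm{span}\{\bm{1}\}$, $S_1$ and $S_2$; note that $S_2$ is exactly $\ker\bm{B}$. It then checks entry by entry that $\bm{A}$ acts on these three subspaces as $1+2b(p-2)$, $1+b(p-4)$ and $1-2b$, and finally compares the three numbers.

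Your identity $\bm{A}=(1-2b)\bm{I}+b\,\bm{B}^\top\bm{B}$ replaces all of that bookkeeping.
\begin{itemize}
\item Since $\bm{B}^\top\bm{B}$ is positive semidefinite and $b\ge 0$, you get $\lambda_{\min}(\bm{A})\ge 1-2b$ without identifying the other eigenspaces.
\item The dimension count $\dim\ker\bm{B}\ge p(p-3)/2>0$ shows the bound is attained, and it isolates exactly where $p\ge 4$ is used.
\item The full spectrum follows from $\bm{B}\bm{B}^\top=(p-2)\bm{I}_p+\bm{J}_p$ rather than from the paper's computations on $S_1$ and $S_2$.
\end{itemize}
Your version is therefore shorter and less error-prone. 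The paper's version has the mild advantage of exhibiting all three eigenspaces explicitly as part of the argument.

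Your factorization is also exactly the covariance of the representation \eqref{Gaussian representation}, $G_{ij}=\sqrt{r}(X_i+X_j)+\sqrt{1-2r}\,Y_{ij}$. This makes the link with the rest of the paper transparent.

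One small imprecision: the claim that ``equality holds precisely when $\bm{B}^\top\bm{B}$ has a nontrivial kernel'' is only true for $b>0$, since for $b=0$ you have $\bm{A}=\bm{I}$. You only use the ``if'' direction, so the argument is unaffected.
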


\noindent \textbf{Proof of Lemma \ref{positive definite}.} 
We do not have to worry about the particular choice of the ordering on $I$ since $\bm{A}$ is the covariance matrix of   a random vector and {permuting the components of a random vector does not change the smallest eigenvalue of  its covariance matrix.} 

Let $d=p(p-1)/2=|I|$. Define the linearly independent vectors $\bm{s}^{(v)}:=\big(s^{(v)}_{ij}\big) \in \mb{R}^d$, $1\leq v \leq p$ through
 \[
 s^{(v)}_{ij} =  \bm{1}_{\la  v \in \la i,j \ra      \ra}, ~1\le i<j\le p.
 \]

 The linear independence can be seen by observing that for all real numbers $\alpha_i$'s:
 \[
\left[ \sum_{v=1}^p \alpha_v \bm{s}^{(v)} \right]_{(i,j)} = \alpha_i + \alpha_j.
 \]
 From the display above, we also have
 \begin{align} \label{orthogonal}
 \sum_{v=1}^p \bm{s}^{(v)} = 2 \cdot \bm{1}_d.
 \end{align}
Decompose $\mb{R}^d$ into  the direct sum of orthogonal subspaces as
$ \mb{R}^d =  \mbox{span} \la  \bm{1}_d \ra \oplus S_1 \oplus S_2$,
where
\begin{align*}
    S_1& := \la  \sum_{v=1}^p \alpha_v \bm{s}^{(v)}: \sum_{v=1}^p \alpha_v=0 \ra \quad \text{ and } \quad
    S_2 := \Big[  \mbox{span} \la  \bm{1}_d \ra \oplus S_1 \Big]^{\perp}.
\end{align*}
The orthogonality between the first two subspaces is due to \eqref{orthogonal}. The subspace $S_2$ can be characterized equivalently as 
\begin{align*}
S_2= \Big[  \mbox{span} \la  \bm{1}_d \ra \oplus S_1 \Big]^{\perp} &= \mbox{span} \la \bm{s}^{(v)}; 1\leq v \leq p \ra^\perp \\
&= \la \bm{x} \in \mb{R}^d:  \  \sum_{u:(u,v) \ \text{or} \ (v,u) \ \text{in} \ I} x_{(u,v)} = 0 \ \text{for all $v \in [p]$}   \ra.
\end{align*}
We will show below that these are precisely the eigenspaces of $\bm{A}$. On $\mbox{span} \la  \bm{1}_d \ra$, we have
\[
\left[ \bm{A} \bm{1}_d \right]_{(i,j)} = 1 + 2b(p-2)
\]
for all $(i,j) \in I$. This is because for each $(i,j) \in I$, there are $2p-2$ pairs sharing a vertex with it. Therefore, the $\lambda_1=1+2b(p-2)$ is the eigenvalue corresponding to this eigenspace.

On $S_1$, we have for $\bm{x}=\sum_{v=1}^p \alpha_v \bm{s}^{(v)}$ that
\[
 [\bm{A} \bm{x}]_{(i,j)}  = \sum_{v=1}^p \alpha_v [\bm{A} \bm{s}^{(v)}]_{(i,j)} =\begin{cases}
     1 + b(p-2), \ &\text{if} \ v \in \la i,j  \ra; \\
     2b,    &\text{if} \ v \notin \la i,j  \ra.
 \end{cases}
\]
Thus,
\[
\bm{A} \bm{x} = \sum_{v=1}^p \alpha_v \bm{A}\bm{s}^{(v)} = \sum_{v=1}^p \alpha_v \lb 2b\cdot \bm{1}_d + (1+b(p-4)\bm{s}^{(v)}) \rb = \left[ 1 + b(p-4) \right] \cdot \bm{x}.
\]
Consequently, $\lambda_2=1+b(p-4)$ is the eigenvalue on the subspace $S_1$.

Finally, consider $S_2$. For $\bm{x} \in S_2$, we have
\begin{align*}
    [\bm{A}\bm{x}]_{(i,j)} = x_{(i,j)} + \sum_{u \neq i,j} x_{(i,u)} + \sum_{u \neq i,j } x_{(j,u)} = 1 -bx_{(i,j)} - bx_{(i,j)} = (1-2b)x_{(i,j)}.
\end{align*}
Thus $\lambda_3=1-2b$ is the eigenvalue on the subspace $S_2$. 

Among the three eigenvalues $\lambda_1=1+2b(p-2)$, $\lambda_2=1+b(p-4)$, $\lambda_3=1-2b$, the smallest one is $\lambda_3$ if $p\geq 4$. The proof is complete. $\hfill$ $\square$

 \begin{lemma} \label{localization}
    With the same notation as in Proposition~\ref{perturbed point process}, for all $\ve > 0$ there exists $C_\ve > 0$ such that
\begin{align} \label{argmax}
\liminf_{n \to \infty} \mathbb{P} \!\left( \argmax_{(i,j):i<j} \left\{ \frac{Z_{(i)} + Z_{(j)}}{\sqrt{2}} + cY_{ij} \right\} \in [K]\times[K] \right) \geq 1 - \ve
\end{align}
for all $K \geq C_\ve$. Here, $[K] = \la 1,2,\dots,K \ra$.
\end{lemma}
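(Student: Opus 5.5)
The plan is to show that, with high probability, the value achieved inside $[K]\times[K]$ already beats every pair involving an index larger than $K$. First I fix a lower anchor. By \eqref{convergence point process} the value $V_n:=(Z_{(1)}+Z_{(2)})/\sqrt{2}+cY_{12}$ converges in law to a finite random variable. Hence for a given $\ve$ there is a constant $a=a_\ve$ such that $\mb{P}(V_n\le -a)\le \ve/3$ for all large $n$. It then suffices to prove
\[
\limsup_{n\to\infty}\mb{P}\Big(\max_{(i,j):\, i<j,\ j>K}\Big\{\frac{Z_{(i)}+Z_{(j)}}{\sqrt{2}}+cY_{ij}\Big\}>-a\Big)\le \frac{2\ve}{3}
\]
for all $K$ large.

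\textbf{Step 1 (good events for the order statistics).} I use that $\eta_k=-\log(\zeta_1+\dots+\zeta_k)\approx-\log k$. Concretely, I would work on a good event $B_n$ with the following properties: $Z_{(1)}\le M$, and $Z_{(k)}\le -\log k+\tfrac12\log k$ for all $K_0\le k\le n$. For the range $k\le n^{\delta}$ this should follow from uniform point-process/tail estimates. Indeed, $\#\{i: Z_i> x\}$ is Binomial with mean $\approx e^{-x}$ uniformly for $x\ge -\delta'\log n$, so $\mb{P}(Z_{(k)}>-\tfrac12\log k)=\mb{P}(\mathrm{Bin}\ge k)$ with mean $\approx\sqrt{k}$. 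A Chernoff bound is summable in $k$, giving probability $\le \ve/6$ for $K_0$ large uniformly in $n$. For the range $k>n^{\delta}$ I use directly that $Z_{(k)}\le c_n(X_{(k)}-d_n)$ and $X_{(n^\delta)}\le \sqrt{2(1-\delta/2)\log n}$ w.h.p., so $Z_{(k)}\le -\delta'' \log n$ there.

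\textbf{Step 2 (union bound over the Gaussian perturbations).} On $B_n$, and conditionally on the $Z$'s (which are independent of the $Y_{ij}$), I bound
\[
\sum_{i<j,\ j>K}\mb{P}\Big(cY_{ij}> -a-\frac{Z_{(i)}+Z_{(j)}}{\sqrt{2}}\Big).
\]
In the regime $j\le n^\delta$, the threshold is at least $-a-\sqrt{2}M+\tfrac{1}{2\sqrt2}(\log i+\log j)$, using $Z_{(i)}\le M$ for small $i$. Gaussian tails $e^{-t^2/(2c^2)}$ then give summands bounded by $\exp(-(\log i+\log j)^2/(16c^2)+O(\log j))$. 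Because the log-squared decay beats any polynomial, the double sum over $i<j$ with $j>K$ converges. Its tail therefore tends to $0$ as $K\to\infty$, uniformly in $n$.

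In the regime $j>n^\delta$, the threshold is $\gtrsim \delta''\log n/\sqrt2$ (since $Z_{(i)}\le M$). Each term is then at most $\exp(-c'(\log n)^2)$, and summing over at most $n^2$ pairs gives $o(1)$.

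\textbf{Main obstacle.} I expect the hardest step to be Step 1, specifically making the bound on $Z_{(k)}$ uniform over the whole range $k\le n$. The limit process only controls finitely many order statistics, so I must handle the intermediate range $n^{\delta}<k$ by separate crude estimates, as above. I also need to ensure the binomial tail estimates hold uniformly in $n$, and not just in the limit.

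Once this is in place, combining the two steps with the anchor gives \eqref{argmax}. The bound holds for every $K\ge C_\ve:=\max(K_0,K_1)$, where $K_1$ is the $K$ at which the Step 2 tail drops below $\ve/6$.
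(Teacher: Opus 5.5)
Your proposal is correct and follows essentially the same route as the paper. Both anchor the inside maximum via the top pair, use a Chernoff bound on the binomial counts $N_n(-\tfrac12\log k,\infty)$ to get $Z_{(k)}\le -\tfrac12\log k$ for all large $k$, and finish with a union bound that exploits the $\exp(-c\log^2 j)$ Gaussian decay. The differences are only cosmetic. The paper observes that $n\,\mathbb{P}(Z_1\ge -\tfrac12\log k)\lesssim k^{3/4}$ holds uniformly for all $k\le n$, so your separate crude treatment of $k>n^{\delta}$ is unnecessary, though harmless. The paper also splits the outside pairs into $M_{1n}$ (both indices $>K$) and $M_{2n}$ (one index $\le K$), where you use a single conditional union bound.
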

\noindent \textbf{Proof of Lemma \ref{localization}.}
Let $K \geq 1$ and define
\begin{align*}
    M_{in}(K,n)& := \max_{1 \leq i<j \leq K} \la \frac{Z_{(i)} + Z_{(j)}}{\sqrt{2}} + cY_{ij} \ra \quad \text{ and } \quad
    M_{out} (K,n) := \max_{i \vee j \geq K} \la \frac{Z_{(i)} + Z_{(j)}}{\sqrt{2}} + cY_{ij} \ra.
\end{align*}
It suffices to show that 
\[
\limsup_{n \to \infty} \mb{P} \lb M_{out} (K,n) \leq M_{in} (K,n)  \rb \geq 1-\ve
\]
for all $K$ sufficiently large.

Let us first estimate the size of $Z_{(i)}$ for large $i$. Put $b_k:= - (\log k)/2$. Observe that for all $k \leq n$, using Mill's ratio we have
\begin{align}
\mb{E} \left[  N_n \ (b_k,\infty) \right]=n \mb{P} \lb  Z_1 \geq b_k  \rb &=  n\mb{P} \lb  N(0,1) \geq \frac{b_k}{c_n} + d_n \rb \nonumber \\
&\leq \frac{10n}{\sqrt{\log n}} \exp \lb  -\frac{\lb  \frac{b_k}{c_n} + d_n \rb^2}{2} \rb \nonumber \\
&\leq \frac{10n}{\sqrt{\log n}} \exp \left[  b_k \lb \frac{1}{2}+o(1) \rb  -\frac{d_n^2}{2} - \frac{b_n^2}{2c_n^2} \right] \nonumber\\
&\leq  \frac{10n}{\sqrt{\log n}} \exp \left[  b_k \lb \frac{1}{2} +o(1) \rb -\log n + \frac{\log \log n}{2}    \right] \nonumber \\
&\leq k^{3/4} \label{chernoff}
\end{align}
uniformly for all $n$ and $k$ sufficiently large, say $M_0$ (a universal constant). 

Recall that for a binomial distribution $X$ of size $n$ and mean $\mu$, Chernoff's bound yields
\[
\mb{P} \lb X  \geq k \rb \leq \exp \left[ -nD_{KL} \lb \frac{k}{n}, \frac{\mu}{n} \rb  \right]
\]
for all $k \geq \mu $, where the $KL$-divergence is given by
\[
D_{KL}(p,q):= p \log \lb \frac{p}{q}\rb + (1-p) \log \lb \frac{1-p}{1-q} \rb.  
\]
Note that $N_n \lb b_k, \infty \rb$ has a binomial distribution with mean of order $O(k^{3/4})$. Consequently, an application of Chernoff's bound gives
\begin{align*}
\mb{P} \lb  Z_{(k)} \geq b_k  \rb &= \mb{P} \lb  N_n (b_k,\infty)  \geq k  \rb \leq \exp \lb  - \frac{k}{4} \log k + 2k \rb
\end{align*}
and therefore,
    $\mb{P} \lb \exists k \geq K:   Z_{(k)} \geq b_k  \rb \leq \sum_{k\geq K} \exp \lb  - \frac{k}{4} \log k + 2k \rb$.
Thus,  we have
\begin{align} \label{order stat bound}
    \mb{P} \lb \forall k \geq K_1(\ve):   Z_{(k)} \leq b_k  \rb \geq 1-\ve/10,
\end{align}
where 
\begin{align} \label{K_1}
    K_1(\ve) := \min \la K \geq 1: \sum_{k \geq K} \exp \lb  - \frac{k}{4} \log k + 2k \rb \leq \frac{\ve}{10} \ra.
\end{align}

\underline{\textit{Step 1: A high-probability lower bound for $M_{in} (K,n)$.}}
By \eqref{convergence point process}, we have
\begin{align*}
\limsup_{n \to \infty} \mb{P} \lb M_{in}(K,n) \geq x \rb &= \mb{P} \lb \max_{1\leq i<j \leq K} \la  \frac{\eta_{i} + \eta_{j}}{\sqrt{2}} + c\mc{Y}_{ij} \ra \geq x  \rb \geq \mb{P} \lb \frac{\eta_1+\eta_2}{\sqrt{2}} + c\mc{Y}_{12} \geq x \rb
\end{align*}
for all $x\in \mb{R}$ and $K\geq 1$.
Choose $T_\ve \in \mb{R}$ such that 
\begin{align} \label{T}
\mb{P} \lb \frac{\eta_1+\eta_2}{\sqrt{2}} + c\mc{Y}_{12} \geq T_\ve  \rb =1-\frac{\ve}{20}.
\end{align}
With this choice of $T_\ve$, we have 
$\limsup_{n \to \infty} \mb{P} \lb M_{in}(K,n) \geq T_\ve \rb \geq 1 - \frac{\ve}{10}$.
\smallskip

\underline{\textit{Step 2: A high-probability upper bound for $M_{out} (K,n)$.}} Note that
$M_{out} (K,n) = \max \la M_{1n}, M_{2n} \ra$,
where 
\begin{align*}
    M_{1n}& := \max_{K+1\leq i<j \leq n} \la \frac{Z_{(i)} + Z_{(j)}}{\sqrt{2}} + cY_{ij} \ra \quad \text{ and } \quad
    M_{2n} := \max_{K+1\leq j \leq n; 1 \leq i \leq K} \la \frac{Z_{(i)} + Z_{(j)}}{\sqrt{2}} + cY_{ij} \ra.
\end{align*}
We define $W_j := \max_{1\leq i  \leq K} Z_{ij}$
and recall $T_\ve$ in \eqref{T}. By virtue of \eqref{order stat bound}, we have 
\begin{align*}
    \mb{P} \lb  M_{out} (K,n) \geq T_\ve  \rb &\leq \mb{P}\lb M_{1n} \geq T_\ve/2 \rb +  \mb{P}\lb M_{2n} \geq T_\ve/2 \rb \\
    &\leq \frac{\ve}{5} + \mb{P} \lb  \max_{K+1\leq i<j \leq n} \la \frac{-\log(i) - \log(j)}{2\sqrt{2}} + cY_{ij} \ra \geq \frac{T_\ve}{2} \rb \\
    &+  \mb{P} \lb  \frac{Z_{(1)}}{\sqrt{2}} + \max_{j \geq K+1 } \la -\frac{\log(j)}{2\sqrt{2}} + cW_j \ra \geq \frac{T_\ve}{2}  \rb \\
    &\leq \frac{\ve}{5} + \mb{P} \lb  \max_{K+1\leq i<j \leq n} \la \frac{-\log(i) - \log(j)}{2\sqrt{2}} + cY_{ij} \ra \geq \frac{T}{2} \rb \\
    &+ \mb{P} \lb  \frac{Z_{(1)}}{\sqrt{2}}  \geq v \rb +   \mb{P} \lb  \max_{j \geq K+1 } \la -\frac{\log(j)}{2\sqrt{2}} + cW_j \ra \geq \frac{T_\ve}{2} - v  \rb
 \end{align*}
for all $v \in \mb{R}$. 
We proceed by estimating each term on the right-hand side for $K \geq \exp \la -T_\ve(\sqrt{2}+1) \ra$. For the first probability, we have
\begin{align*}
    & \mb{P} \lb  \max_{K+1\leq i<j \leq n} \la \frac{-\log(i) - \log(j)}{2\sqrt{2}} + cY_{ij} \ra \geq \frac{T_\ve}{2} \rb \\ 
    \leq & \sum_{K+1\leq i<j \leq n} \mb{P} \lb  Y_{ij} \geq \frac{1}{2c} \lb T_\ve + \frac{\log(i) + \log(j)}{\sqrt{2}} \rb  \rb \\
    \leq & \sum_{K+1\leq i<j \leq n} \mb{P} \lb  Y_{ij} \geq \frac{\log(i) + \log(j)}{4c} \rb \\
    \leq & \lb  \sum_{i \geq K+1} \exp \lb -\frac{\log^2(i)}{32c^2} \rb   \rb^2 \leq \frac{\ve}{5}\,,
\end{align*}
whenever 
\begin{align} \label{K_2}
    K \geq K_2(\ve):= \min \la K \geq 1: \sum_{i \geq K+1} \exp \lb -\frac{\log^2(i)}{32c^2} \rb  \leq \sqrt{\ve/5} \ra.
\end{align}
For the second probability, note that for $v=v_\ve$ such that $\mb{P} \lb  \eta_1 \geq \sqrt{2}v_\ve \rb= \ve/10$, we have
\begin{align} \label{v_ep}
\limsup_{n \to \infty} \mb{P} \lb  \frac{Z_{(1)}}{\sqrt{2}}  \geq v_\ve \rb = \mb{P} \lb  \eta_1 \geq \sqrt{2}v_\ve \rb= \frac{\ve}{10} < \frac{\ve}{5}
\end{align}
due to \eqref{convergence point process}.
Finally, we estimate the second probability as 
\begin{align*}
      \mb{P} &\lb  \max_{j \geq K+1 } \la -\frac{\log(j)}{2\sqrt{2}} + cW_j \ra \geq \frac{T_\ve}{2} - v_\ve  \rb 
     \leq  \sum_{j \geq K+1} \mb{P} \lb  W_j \geq \frac{1}{c} \lb \frac{T_\ve}{2} - v_\ve + \frac{\log(j)}{2\sqrt{2}} \rb \rb \\
     &\leq  K  \sum_{j \geq K+1} \mb{P} \lb  N(0,1) \geq \frac{1}{c} \lb \frac{T_\ve}{2} - v_\ve + \frac{\log(j)}{2\sqrt{2}} \rb \rb 
     \leq  K \sum_{j \geq K+1} \mb{P} \lb  N(0,1) \geq  \frac{\log(j)}{4c}  \rb
\end{align*}
for all $K \geq \exp \lb 4(\sqrt{2}+1)\lb v_\ve - \frac{T_\ve}{2} \rb \rb$.
The right-hand side above is further bounded by
\[
K \sum_{j \geq K+1} \exp \lb -\frac{\log^2(j)}{32c^2} \rb \leq \frac{\ve}{5}
\]
whenever
\begin{align} \label{K_3}
    K \geq K_3(\ve):= \min \la K \geq 1: K \sum_{j \geq K+1} \exp \lb -\frac{\log^2(j)}{32c^2} \rb \leq \frac{\ve}{5}  \ra.
\end{align}

\underline{\textit{Step 3: Conclusion.}}
With $T_\ve$ chosen in \eqref{T}, we have 
\[
\limsup_{n \to \infty} \mb{P} \lb M_{in}(K,n) \geq T_\ve \rb \geq 1 - \frac{\ve}{10}.
\]
for all $K \geq 1$. 

Moreover, from Step 2, we know that $\mb{P} \lb  M_{out} (K,n) \geq T_\ve  \rb \leq 4\ve/5 $ whenever
\begin{align} \label{C_ep}
K \geq K_{\ve}:=\max \la K_1 \lb \ve \rb, M_0, K_{2}(\ve), K_3(\ve), \exp \la -T_\ve(\sqrt{2}+1) \ra, \exp \la 4(\sqrt{2}+1)\lb v_\ve - \frac{T_\ve}{2} \rb \ra   \ra\,,
\end{align} 
where $M_0$ is the universal constant such that \eqref{chernoff} holds and $K_1(\ve), K_2(\ve), K_3(\ve), T_\ve, v_\ve$ are defined in \eqref{K_1}, \eqref{K_2}, \eqref{K_3}, \eqref{T}, \eqref{v_ep}, respectively. 

We conclude that 
\begin{align*}
    & \liminf_{n \to \infty} \mb{P} \lb  M_{in}(K,n) \geq  M_{out}(K,n) \rb \geq 1 - \frac{\ve}{10} -\frac{4\ve}{5} > 1 -\ve. 
\end{align*}
for all $K \geq K_\ve$, finishing the proof. $\hfill$ $\square$
\smallskip

Note that the proof of Lemma \ref{localization} above actually implies that \eqref{argmax} holds not just at one value of $c$, but also uniformly for all $c$ in a compact interval. The next lemma states that the limiting distribution in Theorem \ref{critical} is non-degenerate.

\begin{lemma} \label{well defined}
  Recall $\eta_i$'s in \eqref{eta}. For $c\geq 0$, define
  \[
  Y_c:= \sup_{i<j} \la \frac{\eta_i+\eta_j}{\sqrt{2}} + cZ_{ij} \ra
  \]
 where $Z_{ij}$'s are standard normal random variables independent of $\eta_i$'s. Then,
  \begin{itemize}
\item[(i)] $Y_c < \infty$ almost surely for all $c \geq 0$.
\item[(ii)]  $Y_{c_n} \stackrel{d}{\to} Y_c$ if $c_n \to c$.
\end{itemize}
\end{lemma}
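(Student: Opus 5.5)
For (i) I would exploit the explicit representation \eqref{eta}. By the strong law of large numbers, $\sum_{k\le i}\zeta_k/i\to 1$ almost surely, so $\eta_i = -\log i + o(\log i)$. Hence there is an almost surely finite random index $I_0$ such that $\eta_i \le -\tfrac12\log i$ for all $i\ge I_0$. Conditionally on $\{\eta_i\}$, which is independent of the $Z_{ij}$'s, I would apply a union bound as in Step 2 of the proof of Lemma \ref{localization}:
\[
\sum_{i<j,\ j\ge I_0}\mb{P}\Bigl( cZ_{ij}\ge x-\tfrac{\eta_i+\eta_j}{\sqrt2}\Bigr).
\]
For $j\ge I_0$ the term $\eta_j/\sqrt2\le -\log j/(2\sqrt2)$ drives the threshold to $+\infty$ logarithmically. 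Since $\eta_i\le \eta_1$ for every $i$, each summand is at most $\exp\bigl(-(x-\eta_1/\sqrt2+\log j/(2\sqrt2))^2/(2c^2)\bigr)$, and there are at most $j$ indices $i<j$. The resulting series $\sum_j j\,e^{-(\log j)^2/(16c^2)+O(\log j)}$ is finite and tends to $0$ as $x\to\infty$. The finitely many pairs with $i<j<I_0$ give a finite maximum almost surely. Therefore $\mb{P}(Y_c>x\mid \eta)\to 0$ as $x\to\infty$ for almost every realization, so $Y_c<\infty$ almost surely. The case $c=0$ is trivial, since then $Y_0=(\eta_1+\eta_2)/\sqrt2$ because the $\eta_i$ are decreasing. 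Non-degeneracy follows from $Y_c\ge (\eta_1+\eta_2)/\sqrt2+cZ_{12}$ together with the upper tail bound just obtained.

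For (ii) I would couple all $Y_{c_n}$ on one probability space, using the same $\eta_i$'s and $Z_{ij}$'s, and show $Y_{c_n}\to Y_c$ almost surely. Fix $\ve>0$ and restrict to $c_n\in[0,c+1]$. Write $Y^{(K)}_{c'}$ for the maximum over $i<j\le K$ and $R^{(K)}_{c'}$ for the maximum over pairs with $j>K$. The argument of part (i) is monotone in the scale: replacing $c$ by $\bar c=c+1$ in the Gaussian tail bound only enlarges it. It therefore gives, for $K$ large, an event of probability at least $1-\ve$ on which
\[
\sup_{c'\in[0,c+1]} R^{(K)}_{c'} < \frac{\eta_1+\eta_2}{\sqrt2}-|Z_{12}|(c+1)\le Y^{(K)}_{c'} \quad\text{for all } c'\in[0,c+1].
\]
To obtain this uniformly in $c'$, I would bound $c'Z_{ij}\le (c+1)Z_{ij}^+$ and rerun the union bound with $Z_{ij}^+$ in place of $Z_{ij}$. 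On this event, $Y_{c'}=Y^{(K)}_{c'}$ for every $c'$ in the interval. The finite maximum $Y^{(K)}_{c'}$ is continuous in $c'$, so $Y_{c_n}\to Y_c$ on the event. Letting $\ve\to0$ yields almost sure convergence, hence convergence in distribution.

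The main obstacle is the uniform localization in (ii): the threshold $(\eta_1+\eta_2)/\sqrt2-(c+1)|Z_{12}|$ must dominate all tail pairs simultaneously for every $c'$ in the interval. The remark following Lemma \ref{localization} already says the localization holds uniformly for $c$ in compact sets. I expect to need only that the union-bound estimates depend on $c$ through $c^{-2}$ in the exponent, which is monotone in $c$.
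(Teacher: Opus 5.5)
Your proof is correct, but it takes a different route from the paper's. The paper derives (i), and the lower half of (ii), from a localization estimate obtained ``similarly to Lemma \ref{localization}'': with probability at least $1-\ve$ the argmax lies in $[K]\times[K]$, uniformly for $c\le A$. Part (i) then follows by letting $\ve\to0$. For (ii) the paper works only with distribution functions. The bound on $\limsup_n \mb{P}(Y_{c_n}\le x)$ uses just the monotone truncation $Y^{(M)}_{c_n}\uparrow Y_{c_n}$ and the continuous mapping theorem, with no localization needed. The bound on $\liminf_n$ uses the uniform localization.

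You instead control the limit object pathwise. The strong law gives $\eta_i\le-\tfrac12\log i$ eventually; in fact $\eta_i=-\log i+o(1)$. This replaces the Chernoff bound on order statistics and turns (i) into a direct conditional union bound. In (ii) you couple all $Y_{c'}$ through the same $(\eta_i,Z_{ij})$. You then show that on an event of probability at least $1-\ve$ one has $Y_{c'}=Y^{(K)}_{c'}$ simultaneously for all $c'\in[0,c+1]$, so $Y_{c_n}\to Y_c$ almost surely.

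Your route gives more. It yields almost sure continuity of $c'\mapsto Y_{c'}$ under the natural coupling. It also justifies explicitly, via $c'Z_{ij}\le(c+1)Z_{ij}^+$, the uniformity in $c$ that the paper only asserts in the remark after Lemma \ref{localization}. The paper's route is shorter because it recycles Lemma \ref{localization} and needs localization for only one of the two inequalities.

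When you write it up, make two details explicit. First, for $K\ge2$ the variable $Z_{12}$ in your threshold is independent of the tail variables $Z_{ij}$, $j>K$, so the union bound can legitimately be done conditionally on $(\eta,Z_{12})$. Second, you pass from ``the conditional bad probability tends to $0$ as $K\to\infty$ for a.e.\ realization'' to ``probability at least $1-\ve$ for all large $K$''; say that this step uses bounded convergence.
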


\noindent \textbf{Proof of Lemma \ref{well defined}.}  Argue similarly to the proof of Lemma \ref{localization} to deduce that for all $\ve,A > 0$, there exists $C_{\ve,A} > 0$ such that
\begin{align} \label{localization 2}
\mathbb{P} \!\left( \argmax_{\la (i,j):i<j \ra} \left\{ \frac{\eta_{i} + \eta_{j}}{\sqrt{2}} + cZ_{ij} \right\} \in [K]\times[K] \right) \geq 1 - \ve
\end{align}
for all $K \geq C_{\ve,A}$ and $c\leq A$. 

We get (i) from the above by letting $\ve \to 0$. To prove (ii), note that 
\[
Y_{c_n}^{(M)}:= \sup_{1\leq i<j \leq M} \la \frac{\eta_i+\eta_j}{\sqrt{2}} + c_nZ_{ij} \ra
\]
 almost surely increases to $Y_{c_n}$ as $M \to \infty$. Thus, for every $x \in \mb{R}$ and $M\geq 1$,
\[
\limsup_{n \to \infty} \mb{P} \lb Y_{c_n} \leq x \rb \leq \limsup_{n \to \infty} \mb{P} \lb Y^{(M)}_{c_n} \leq x \rb = \mb{P} \lb Y^{(M)}_{c} \leq x \rb,
\]
where the last equality follows from the continuous mapping therem. By letting $M \to \infty$, we deduce that
$\limsup_{n \to \infty} \mb{P} \lb Y_{c_n} \leq x \rb \leq \mb{P} \lb Y_{c} \leq x \rb$.
To prove the other direction, use \eqref{localization 2} with $A=\sup_{n\geq 1} c_n$, to obtain that for all $\ve>0$ and $M$ large enough,
\[
\liminf_{n \to \infty} \mb{P} \lb Y_{c_n} \leq x \rb \geq \liminf_{n \to \infty} \mb{P} \lb Y^{M}_{c_n} \leq x \rb -\ve \geq \liminf_{n \to \infty} \mb{P} \lb Y^{M}_{c} \leq x \rb -\ve.
\]
We get (ii) from this by first letting $M \to \infty$ and then letting $\ve \to 0$. $\hfill$ $\square$


\bigskip

\noindent\textbf{Acknowledgments:} 
JH’s research was supported by the Swedish Research Council grant VR-2023-03577 ``High-dimensional extremes and random matrix structures'' and by the Verg-Foundation.

\bibliographystyle{plainnat}
\bibliography{paper-ref}

\end{document}